\newtheorem{rmq}{Remark}
\newtheorem{df}{Definition}
\newtheorem{thm}{Theorem}
\newtheorem{prop}{Proposition}
\newtheorem{lm}{Lemma}
\def\ba{\begin{array}{l}}
\def\benum{\begin{enumerate}}
\def\bitem{\begin{itemize}}
\def\cstab{C_s}
\def\div{{\rm div}}
\newcommand{\dgammax}{\ \mathrm{d}\gamma(\bfx)}
\def\ea{\end{array}}
\def\ee{\end{equation}}
\def\eenum{\end{enumerate}}
\def\g{\gamma}
\def\grad{{\nabla}}
\def\nnn{{n \in \N}}
\def\P{{\mathcal P}}
\def\refe#1{(\ref{#1})}
\def\vphi{\varphi}
\def\R{\ifmmode{\rm	I\mkern-3.1mu
R\mkern1mu}\else{\rm I\kern-.18em 
R\hskip1pt\	}\fi\relax}	
\def\Z{\ifmmode{\it	Z\mkern-7.8mu
Z\mkern2mu}\else{\it Z\kern-.28em 
Z\hskip1pt\	}\fi\relax}	
\def\Q{\ifmmode{\rm	Q\mkern-10mu
l\mkern4.5mu}\else{\rm Q\kern-.57em
l\hskip3pt\	}\fi\relax}	
\def\N{\ifmmode{\rm	I\mkern-3.1mu
N\mkern0.5mu}\else{\rm I\kern-.16em
N\hskip0.5pt\ }\fi\relax} 
\def\CC{\ifmmode{\rm C\mkern-8.8mu
l\mkern4mu}\else{\rm C\kern-.48em
l\hskip2.6pt\ }\fi\relax}
\newcommand{\sddi}{ \sum_{\substack{\edged \in \widetilde{\E}^{(i)}_{\text{int}},\\ \edged = D_\edge| D_{\edge'}}}}
\newcommand{\edges}{{\mathcal E}}
\newcommand{\edged}{\epsilon}
\newcommand{\edgesint}{{\mathcal E}_{\rm int}}
\newcommand{\edge}{{\sigma}}
\newcommand{\edgesK}{\edges(K)}
\newcommand{\edgesL}{\edges(L)}
\newcommand{\edgesext}{{\mathcal E}_{\mathrm{ext}}}
\newcommand{\edgesinti}{{\mathcal E}_{\mathrm{int}}^{(i)}}
\newcommand{\edgesintj}{{\mathcal E}_{\mathrm{int}}^{(j)}}
\newcommand{\edgesexti}{{\mathcal E}_{\mathrm{ext}}^{(i)}}
\newcommand{\edgesi}{\mathcal {E}^{(i)}}
\newcommand{\edgesj}{\mathcal {E}^{(j)}}
\newcommand{\edgesdi}{{\edgesd^{(i)}}}
\newcommand{\edgesd}{\widetilde {\edges}}
\newcommand{\edgesdinti}{{\edgesd^{(i)}_{{\rm int}}}}
\newcommand{\edgesdexti}{{\edgesd^{(i)}_{{\rm ext}}}}
\newcommand{\mesh}{{\mathcal M}}
\newcommand{\bfe}{{\boldsymbol e}}
\newcommand{\bff}{{\boldsymbol f}}
\newcommand{\bfn}{{\boldsymbol n}}
\newcommand{\bfu}{{\boldsymbol u}}
\newcommand{\uedge}{{u}_\edge}
\newcommand{\bfv}{{\boldsymbol v}}
\newcommand{\bfw}{{\boldsymbol w}}
\newcommand{\bfx}{{\boldsymbol x}}
\newcommand{\bfvphi}{{\boldsymbol \varphi}}
\newcommand{\vr}{\varrho}
\newcommand{\sth}{\sum_{K \in {\mathcal{M}} }}
\newcommand{\stkl}{\sum_{\edge \in {\mathcal{E}}_{\intt},\edge=K|L} }
\newcommand{\bu}{\bm{u}}
\newcommand{\bv}{\bm{v}}
\newcommand{\bn}{\bm{n}}
\newcommand{\bV}{\bm{V}}
\newcommand{\characteristic}{\mathds{1}}
\newcommand{\ie}{\emph{i.e.\/}}
\newcommand{\dx}{\, {\rm d}\bfx}
\newcommand{\mass}{M}
\newcommand{\nti}{{n \tends + \infty}}
\newcommand{\tends}{\rightarrow}
\newcommand{\qbar}{\delta}
\definecolor{bfonce}{rgb}{0.,0.,0.8}	
\definecolor{rougec}{rgb}{1,0.4,0.}
\definecolor{bclair}{rgb}{0.87,0.92,1.}
\definecolor{bclairr}{rgb}{0.76,0.85,1.}
\definecolor{bclairf}{rgb}{0.40,0.65,0.89}
\definecolor{vertf}{rgb}{0,0.55,0.1}
\definecolor{vertm}{rgb}{0,0.6,0.35}
\definecolor{vclair}{rgb}{0.6,0.95,0.75}
\definecolor{or}{rgb}{0.98,0.6,.1}
\definecolor{shadow}{rgb}{.4,.4,.6}
\newcommand{\xC}{{\rm C}} 
\newcommand{\xH}{{\rm H}}
\newcommand{\xHone}{{\rm H}^{1}}
\newcommand{\xN}{\mathbb{N}}
\newcommand{\xR}{\mathbb{R}}
\newcommand{\edgeperp}{\tau}
\newcommand{\fluxd}{F_{\edge,\edged}}
\newcommand{\Ds}{{\scalebox{0.6}{$D_\edge$}}}
\newcommand{\partdtg}{\overline \eth}
\newcommand{\ui}{u_i}
\newcommand{\vi}{v_i}
\newcommand{\ei}{^{(i)}}
\newcommand{\uKedge}{u_{K,\edge}}
\newcommand{\bvarphi}{{\boldsymbol \varphi}}
\newcommand{\disc}{{\mathcal D}}
\newcommand{\nablai}{{\boldsymbol \nabla}}
\newcommand{\gradedges}{{\nablai}_{\! \edges}}
\newcommand{\medge}{\vert \edge \vert}
\newcommand{\norm}[2]{\hspace{.2em}|\hspace{-.1em}| #1 |\hspace{-.1em}|_{#2}\hspace{.2em}}
\newcommand{\normd}[2]{\hspace{.2em}|\hspace{-.1em}| #1 |\hspace{-.1em}|^2_{#2}\hspace{.2em}}
\newcommand{\normexp}[3]{\hspace{.2em}|\hspace{-.1em}| #1 |\hspace{-.1em}|^{#3}_{#2}\hspace{.2em}}
\newcommand{\dive}{{\rm div}}
\newcommand{\dived}{{\rm div}_{\hspace{-0.2em}\raisebox{-0.1em}{\scalebox{0.5}{$\mesh$}}}}
\newcommand{\divedn}{{\rm div}_{\hspace{-0.2em}\raisebox{-0.1em}{\scalebox{0.5}{$\mesh_n$}}}}
\newcommand{\curl}{{\rm curl}}
\newcommand{\curld}{{\rm curl}_{\raisebox{-0.1em}{\scalebox{0.5}{$\mesh$}}}}
\newcommand{\gradi}{{\boldsymbol \nabla}}
\newcommand{\gradtg}{{\overline \nabla}}
\newcommand{\llbracket}{[ \! \!\vert }
\newcommand{\rrbracket}{\vert \! \! ]}
\newcommand{\Hmesh}{{\mathbf{H}_\edges}}
\newcommand{\Hmeshzero}{{\mathbf{H}_{\edges,0}}}
\newcommand{\Hmeshnzero}{{\mathbf{H}_{\edges_n,0}}}
\newcommand{\Hmeshi}{H_\edges^{(i)}}
\newcommand{\Hmeshizero}{H_{\edges,0}^{(i)}}
\DeclareMathOperator{\dv}{div}
\DeclareMathOperator{\dt}{dt}
\DeclareMathOperator{\upw}{up}
\DeclareMathOperator{\diam}{diam}
\DeclareMathOperator{\intt}{int}
\DeclareMathOperator{\E}{{\mathcal{E}}}
\DeclareMathOperator{\Rr}{{\mathcal{R}}}
\numberwithin{equation}{section}
\begin{document}

\title[Convergence of the MAC scheme]{Convergence  of the MAC scheme for the  compressible stationary Navier-Stokes equations}

\author{T. Gallou\"et}
\address{Aix Marseille Univ, CNRS, Centrale Marseille, I2M, Marseille, France}
\email{thierry.gallouet@univ-amu.fr}

\author{R. Herbin}
\address{Aix Marseille Univ, CNRS, Centrale Marseille, I2M, Marseille, France}
\email{raphaele.herbin@univ-amu.fr}

\author{J.-C. Latch\'e}
\address{Institut de Radioprotection et de S\^{u}ret\'{e} Nucl\'{e}aire (IRSN), France}
\email{jean-claude.latche@irsn.fr}

\author{D. Maltese}
\address{IMATH, Université du Sud Toulon-Var,
BP 20132 - 83957 La Garde Cedex, France}
\email{david.maltese@univ-tln.fr}

\subjclass[2000]{35Q30, 65N12, 76N10, 76N15, 65M12}
\keywords{Compressible fluids, Navier-Stokes equations, Cartesian grids, Marker and Cell scheme, Convergence}

\begin{abstract}
We prove in this paper the convergence of the Marker and Cell (MAC) scheme for the discretization of the steady state compressible and isentropic Navier-Stokes equations on two or three-dimensional Cartesian grids. Existence of a solution to the scheme is proven, followed by estimates on approximate solutions, which yield the convergence of the approximate solutions, up to a subsequence, and in an appropriate sense. We then prove that the limit of the approximate solutions satisfies the mass and momentum balance equations, as well as the equation of state, which is the main difficulty of this study.
\end{abstract}

\maketitle

%
%

%
%
\section{Introduction}\label{intro}

The aim of this paper is to prove the convergence of the marker-and-cell (MAC) scheme for the discretization of the stationary and isentropic compressible Navier-Stokes system.
These equations are posed on a bounded domain $\Omega$ of $\R^d$, compatible with a MAC grid (see section \ref{3}), $d=2,3$, and read:
\begin{subequations}\label{pbcont_w}
	 \begin{align}
		&  \dv ( \vr \bu) = 0 \ \text{in} \ \Omega,  \label{cont2} \\
		&\dv(\vr \bu \otimes \bu) - \mu \Delta \bu-(\mu+\lambda)\nabla \dv \bu +  \nabla p = \bm{f} \ \text{in} \ \Omega, \label{mov2} \\
                  & p=\vr^\gamma \ \text{in} \ \Omega, \ \vr \ge 0 \ \text{in} \ \Omega, \ \int_\Omega \vr \dx=M, \label{EOS}
	\end{align}
	\label{pbcont}
\end{subequations}
supplemented by the boundary condition
\begin{equation}\label{ci2}
	\bu_{|\partial \Omega} =0.
\end{equation}
In the above equations, the unknown functions are the scalar density and pressure fields, denoted by $\vr(\bfx)\ge 0$ and $p(\bfx)$ respectively, and the vector velocity field $\bu=(u_1,\ldots,u_d)(\bfx)$, where  $\bfx\in \Omega$ denotes the space variable. 
The viscosity coefficients $\mu$ and $\lambda$ are such that (see \cite{feireisl2004dynamics})
\begin{equation}\label{visc}
	\mu > 0, \qquad \lambda+ \frac {2}{d} \mu \ge 0.
\end{equation}
The function $\bm{f} \in L^2(\Omega)^d$ represents the resultant of the exterior forces acting on the fluid while the constant $ M > 0 $ stands for the total mass of the fluid.
In the compressible barotropic Navier-Stokes equations, the pressure is a given function of the density. Here we assume that the fluid is a perfect gas obeying Boyle's law:
\begin{equation}\label{pressure1}
p = a \vr^\gamma \ \text{in} \ \Omega,
\end{equation}
where $a>0$ and where $\gamma >1$ is termed the {\it{adiabatic constant}}. Typical values of $\gamma$  range from a maximum $5/3$ for {\it{monoatomic gases}}, through $7/5$ for {\it{diatomic gases}} incuding air, to lower values close to $1$ for {\it{polyatomic gases}} at high temperature. For the sake of simplicity, the constant $a$ will be taken equal to $1$. Unfortunately, for purely technical reasons, we will be forced to require that $\gamma >3$ if $d=3$ to prove the convergence of the MAC scheme. There is no restriction if $d=2$ in the sense that we can choose $\gamma >1$.

\begin{rmq}[Forcing term involving the density]
Instead of taking a given function $\bm{f}$ in ($\ref{mov2}$), it is possible, in order to take the gravity effects into account, to take $\bm{f} = \vr \bm{g} $ with $ \bm{g} \in L^\infty (\Omega)^d $.
\end{rmq}

The mathematical analysis of numerical schemes for the discretization of the  steady and/or time-dependent compressible Navier-Stokes and/or compressible Stokes equations has been the object of some recent works. 
The convergence of the discrete solutions to the weak solutions of the compressible stationary Stokes problem was shown for a finite volume-- non conforming P1 finite element
\cite{gal-09-conv,eymard2010convergent,fettah2012numerical} and for the wellknown MAC scheme (see \cite{eymard2010convergence}) which was introduced in \cite{harlow1965numerical} and is widely used in computational fluid dynamics.
The unsteady Stokes problem was also discretized using a FV-FE scheme (Finite Volumes and Finite Elements) on a reformulation of the problem, which were  proven to be convergent \cite{karlsen2012convergent}.
The unsteady barotropic Navier-Stokes equations was also recently tackled in \cite{karper2013convergent}, with a FV-FE scheme, albeit only in the case $\gamma>3$ (there is a real difficulty in the realistic case $\gamma \le 3$ arising from the treatment of the non linear convection term). Some error estimates have been derived for this FV-FE scheme in \cite{Gallouet:2015ab}.

Since the very beginning of the introduction of the Marker-and-Cell (MAC) scheme \cite{harlow1965numerical}, it is claimed that this discretization is suitable for both incompressible and compressible flow problems (see \cite{har-68-num,har-71-num} for the seminal papers, \cite{cas-84-pre,iss-85-sol,iss-86-com,kar-89-pre,bij-98-uni,col-99-pro,van-01-sta,van-03-con,vid-06-sup,wal-02-sem,wen-02-mac} for subsequent developments and \cite{wes-01-pri} for a review).
The use of the MAC scheme in the incompressible case is now standard, and the convergence in this case has been recently tackled in \cite{gallouet2015convergence}. 

The paper is organized as follows.
After recalling the fundamental setting of the problem in the continuous case in Section \ref{2}, we present a simple way (which adapts to the discrete setting) to prove a known preliminary result, namely the convergence (up to a subsequence) of the weak solution of Problem \refe{pbcont_w}-\refe{pressure1} with $ \bm{f}_n $ and $M_n$ (instead of $ \bm{f}$ and $M$) towards a weak solution of Problem \refe{pbcont_w}-\refe{pressure1} (with $M_n \to M$ and $f_n \to f$ weakly in
$L^2(\Omega)^d$ as $\nti$).
Then we proceed in Section \ref{3} to the discretization: we introduce the discrete functional spaces and the definition of the numerical scheme, and state an existence result for this numerical scheme, the proof of which is given in Appendix \ref{existproof}.
The main result of this paper, that is the convergence theorem, is stated in Theorem \ref{mainthm}.
The remaining sections are devoted to the proof of Theorem \ref{mainthm}.
In Section \ref{4}, we derive estimates satisfied by the solutions of the scheme. 
In Section \ref{sec:qdm_and_mass}, we prove the convergence of the numerical scheme in the sense of Theorem \ref{mainthm} toward a weak solution of Problem \eqref{pbcont}-\eqref{pressure1}.

%
%

\section{The continuous problem}\label{2}

\subsection{Definition of weak solution}

In the sequel we explain what we mean by weak solution of Problem \eqref{pbcont_w}--\eqref{pressure1}.
Briefly, if $d=2$ and $\gamma >1$, it is possible to obtain a weak solution $(\bfu,p,\vr)$ of \eqref{pbcont_w}--\eqref{pressure1} in the space $(H^1_0(\Omega))^2 \times L^2(\Omega) \times L^{2 \gamma}(\Omega)$ and to prove the convergence of a sequence of approximate
solutions (up to a subsequence) towards  a weak solution in the sense of  Definition \ref{ldeuxw}. If $d=3$, the problem is much more difficult.
For any $\gamma > 3/2$, a weak solution $(\bfu,p,\vr)$ may be defined (with the extra hypothesis that $\bm{f}$ satisfies $ \curl \bm{f} = \bm{0}$ in the case $ \gamma \in (\frac{3}{2},\frac{5}{3}] $). 
However, this weak solution  belongs to a functional space which depends on $\gamma$.
Indeed, the function $\bfu$ always belongs to $H^1_0(\Omega)^3$, but the function $p$  belongs to $L^2(\Omega)$ only if
$\gamma \ge 3$ (and the function $\vr$  belongs to $L^2(\Omega)$ only if $\gamma \ge 5/3$).  More precisely, for $d=3$ and $\gamma  <3$, we only get an estimate on $p$ in  $ L^{\delta}(\Omega)$, and an estimate on $\vr$ in $L^{\gamma\delta}(\Omega)$, with $\delta=\frac{3(\gamma-1)}{\gamma}$.
Note that for $\gamma=\frac 3 2$, one has $\qbar=\frac{3(\gamma-1)}{\gamma}=1$, and    $\gamma \delta = 3(\gamma-1)=\frac 3 2$, so that the natural spaces are $ p \in L^{1}(\Omega)$ and $\vr \in  L^{\frac 3 2}(\Omega)$. Note that in the case of the compressible Stokes equations, an  $L^{2}$ estimate on the pressure and an $L^{2\gamma}$ estimate on the density are obtained for $d=2$ or $3$ and there is no restriction on $\gamma$ in the sense that we can take $\gamma>1$ (see for instance  \cite{eymard2010convergent} and \cite{eymard2010convergence}).

To be in accordance with the main theorem of this article (see Theorem \ref{mainthm}), we then define the notion of weak solution only  for the case $ \gamma > 3 $ if $d=3$ and $ \gamma >1 $ if $d=2$.
We refer the reader to \cite{novo2002existence} and \cite{straskraba2004introduction} for further informations about the notion of weak solutions and their existence. 
We recall that a bounded Lipschitz domain of $ \R^d$ is a bounded connected open subset of $\R^d$ with a Lipschitz boundary.

In the whole paper, we define the $L^p$ vector norm by:  $\Vert\cdot\Vert_{L^p(\Omega)^d} = \Vert |\cdot |\Vert_{L^p(\Omega)}$, where $|\cdot |$ denotes the Euclidean norm in $\xR^d$.

\begin{df}\label{ldeuxw}Let $d= 2$ or $3$, $\Omega$ be a bounded Lipschitz domain of $ \R^d$ and let  $\bm{f} \in L^2(\Omega)^d$, $M >0$. Let $ \gamma >3 $ if $d=3$ or $\gamma>1$ if $d=2$.
A  weak solution of Problem \eqref{pbcont_w}--\eqref{pressure1} is a function
$(\bfu,p,\vr)  \in  (H^1_0(\Omega))^d \times L^2(\Omega) \times L^{2\gamma}(\Omega)$ satisfying the equations of \eqref{pbcont_w}--\eqref{pressure1} in the following weak sense:

\begin{subequations}
\begin{equation}\label{contf}
	\int_\Omega  \vr \bu \cdot \nabla \varphi \dx= 0, \,  \forall \varphi \in W^{1,\infty} ( \Omega).
\end{equation}
\begin{multline}\label{movf}
	- \int_\Omega \vr \bu \otimes \bu : \nabla \bv \dx +\mu \int_\Omega \nabla \bu : \nabla \bv \dx  +(\mu +\lambda)\int_\Omega \dv \bu \dv \bv \dx \\
 -\int_\Omega p \dv \bv \dx = \int_\Omega \bm{f} \cdot \bv \dx, \;  \, \forall \bv \in C_c^\infty(\Omega)^d.
\end{multline}
\begin{equation}
\vr \ge 0 \textrm{ a.e. in } \Omega, \;  \int_\Omega  \vr \dx =M \textrm{  and } p = \vr^\gamma \textrm{ a.e in } \Omega.
\label{eosf} 
\end{equation}
\label{eq:weak}
\end{subequations}
\end{df}

\begin{rmq} 
Let $(\bu,p,\vr)$ be a weak solution  in the sense of Definition \ref{ldeuxw}. 
Then:
\begin{enumerate}
\item $(\bu,p,\vr)$  satisfies the following inequality (see Step 1 of the proof of Theorem \ref{continuityws})
\begin{equation}\label{ienergief}
\int_\Omega\Big( \mu| \nabla \bu |^2 +(\mu+\lambda)|\dv \bu|^2\Big) \dx\dt\le \int_\Omega \bm{f} \cdot \bu \dx.
\end{equation} 
\item By a density argument, using $\g \ge 3$, one can take $ \bv \in H_0^1(\Omega)^d$ in ($\ref{movf}$).
\end{enumerate}
\end{rmq}

%
%

\subsection{Passage to the limit with approximate data}

In order to understand our strategy in the discrete case,
we first prove here the following result (which states the continuity, up to a subsequence, of the weak solution of \refe{pbcont_w}-\refe{pressure1}
with respect to the data). In the following, we set
\begin{equation*} q(d)= \left\{
\begin{array}{l}
  +\infty \ \text{if} \ d=2, \\
 6 \ \text{if} \ d=3.
\end{array}
\right.
\end{equation*}

\begin{thm}\label{continuityws}
Let $ \Omega $ be a bounded Lipschitz domain of $ \R^d$, $d=2$ or $3$. Let $\gamma >1 $ if $d=2$ and $\gamma > 3$ if $d=3$. Let $\bm{f} \in L^2(\Omega)^d$, $M >0$ and $(\bm{f}_n)_\nnn \subset L^2(\Omega)^d$, $(M_n)_\nnn \subset \R_+^\star$ be some sequences satisfying
$\bm{f}_n \tends \bm{f}$ weakly in $(L^2(\Omega))^d$ and $M_n \tends M$. For $\nnn$, let $(\bfu_n,p_n,\vr_n)$ be a weak solution of
\refe{pbcont_w}-\refe{pressure1}, in the sense of Definition \ref{ldeuxw}, with $\bm{f}_n$ and $M_n$ instead of $\bm{f}$ and $M$.


Then, there exists $(\bfu,p,\vr) \in (H^1_0(\Omega))^d \times L^2(\Omega) \times L^{2\gamma}(\Omega)$ such that, up to a subsequence, as $\nti$,
\begin{itemize}
\item $\bfu_n \tends \bfu$ in $(L^q(\Omega))^d$ for $1 \le q < q(d)$ and weakly in $H^1_0(\Omega)^d$,
\item $p_n \tends p$ in $L^q(\Omega)$ for $1 \le q < 2$ and weakly in $L^2(\Omega)$,
\item $\vr_n \tends \vr$ in $L^q(\Omega)$ for $1 \le q < 2 \gamma$ and weakly in $L^{2\gamma}(\Omega)$,
\end{itemize}
and $(\bfu,p,\vr)$ is a weak solution of \refe{pbcont_w}-\refe{pressure1}.
\label{cwrtd}
\end{thm}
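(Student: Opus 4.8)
The plan is to follow the by-now classical compactness strategy for the isentropic compressible system, organised in three blocks: uniform estimates, extraction of limits together with passage to the limit in the linear and convective terms, and finally the identification of the pressure, which is where the real work lies.

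\emph{Uniform estimates.} First I would test the momentum equation \eqref{movf} (written for $(\bfu_n,p_n,\vr_n)$) with $\bv = \bfu_n$; since a weakly convergent sequence is bounded, $\bm{f}_n$ is bounded in $L^2(\Omega)^d$, and the coercivity provided by \eqref{visc} together with the Poincar\'e inequality then yields a bound on $\bfu_n$ in $H^1_0(\Omega)^d$ uniform in $n$. The pressure estimate is obtained by the usual device of choosing $\bv = \bv_n$ with $\dv \bv_n$ equal to $p_n$ minus its mean (the Bogovskii construction, so that $\bv_n \in H^1_0(\Omega)^d$ is controlled by $p_n$ in $L^2$), inserting it in \eqref{movf} and absorbing the right-hand side into the left; this produces a bound on $p_n$ in $L^2(\Omega)$, equivalently on $\vr_n$ in $L^{2\gamma}(\Omega)$, the mean being fixed by $\int_\Omega \vr_n \dx = M_n$. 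It is precisely in closing this estimate in dimension three that the restriction $\gamma > 3$ enters: it guarantees that the convective term $\vr_n \bfu_n \otimes \bfu_n$ belongs to $L^2(\Omega)$ and can therefore be paired with $\nabla \bv_n$.

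\emph{Extraction and the easy limits.} Up to a subsequence I then obtain $\bfu_n \rightharpoonup \bfu$ weakly in $H^1_0(\Omega)^d$ and, by Rellich, strongly in $L^q(\Omega)^d$ for every $q < q(d)$; $p_n \rightharpoonup p$ weakly in $L^2(\Omega)$; and $\vr_n \rightharpoonup \vr$ weakly in $L^{2\gamma}(\Omega)$. Passing to the limit in the mass equation \eqref{contf} is immediate, the strong convergence of $\bfu_n$ combined with the weak convergence of $\vr_n$ giving $\int_\Omega \vr \bfu \cdot \nabla \varphi \dx = 0$. In \eqref{movf} the viscous and divergence terms pass by weak $H^1$ convergence, and the convective term $\vr_n \bfu_n \otimes \bfu_n$ converges to $\vr \bfu \otimes \bfu$ because $\bfu_n \otimes \bfu_n \to \bfu \otimes \bfu$ strongly in a suitable $L^r(\Omega)$ while $\vr_n$ converges weakly in the dual exponent — once more this is where $\gamma > 3$ is used when $d=3$. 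At this stage the limit $(\bfu,p,\vr)$ satisfies \eqref{movf}, but with the pressure replaced by the weak limit $\overline{\vr^\gamma}$ of $\vr_n^\gamma$, which a priori need not equal $\vr^\gamma$.

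\emph{Identification of the pressure — the crux.} The remaining and genuinely hard point is to show $p = \overline{\vr^\gamma} = \vr^\gamma$, which amounts to proving that $\vr_n \to \vr$ almost everywhere. For this I would establish the \emph{effective viscous flux} identity: using in \eqref{movf} test functions of the form $\bv = \bfvarphi\, \nabla \Delta^{-1} \vr_n$ (and their analogues for the limit equation), a div--curl / compensated-compactness argument shows that the weak limit of $\big(p_n - (2\mu+\lambda)\dv \bfu_n\big)\vr_n$ equals $\big(p - (2\mu+\lambda)\dv \bfu\big)\vr$. Independently, since $\vr \in L^{2\gamma}(\Omega) \subset L^2(\Omega)$ and $\bfu \in H^1_0(\Omega)^d$, the DiPerna--Lions theory applies, so both $(\vr_n,\bfu_n)$ and $(\vr,\bfu)$ satisfy the renormalised continuity equation; choosing the renormalisation $b(s) = s \log s$ and integrating over $\Omega$ gives $\int_\Omega \vr_n \dv \bfu_n \dx = 0$ and $\int_\Omega \vr \dv \bfu \dx = 0$. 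Combining the flux identity with these two relations makes the viscous contributions cancel and leaves $\int_\Omega \overline{\vr^{\gamma+1}} \dx = \int_\Omega \overline{\vr^\gamma}\, \vr \dx$. Since $s \mapsto s^\gamma$ is monotone, $\int_\Omega (\vr_n^\gamma - \vr^\gamma)(\vr_n - \vr)\dx \ge 0$, and its limit is exactly the difference of the two sides just obtained, hence vanishes; strict monotonicity then forces $\vr_n \to \vr$ in measure, and therefore almost everywhere along a further subsequence. Almost-everywhere convergence together with the uniform $L^{2\gamma}$ bound upgrades (via Vitali) the weak convergences to strong convergence of $\vr_n$ in $L^q(\Omega)$, $q<2\gamma$, and of $p_n = \vr_n^\gamma$ in $L^q(\Omega)$, $q<2$, and yields $p = \vr^\gamma$. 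The effective viscous flux lemma is the step I expect to be the main obstacle, and the hypothesis $\gamma > 3$ (for $d=3$) is precisely what makes all the products appearing in that argument integrable without having to introduce truncations of $\vr_n$.
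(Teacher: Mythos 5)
Your proposal follows essentially the same route as the paper: an energy estimate plus a Bogovskii-based $L^2$ pressure bound, weak compactness for the linear and convective terms, the effective viscous flux identity obtained by testing with $\varphi\,\nabla\Delta^{-1}\vr_n$ and a div--curl argument, the renormalised mass equation to cancel $\int_\Omega \vr_n \dv \bfu_n \dx$, and the monotonicity trick $(\vr_n^\gamma-\vr^\gamma)(\vr_n-\vr)\ge 0$ to obtain a.e.\ convergence of $\vr_n$. The only point you leave implicit is that already closing the $H^1_0$ energy estimate requires the renormalised continuity equation, since testing \eqref{movf} with $\bfu_n$ leaves the terms $\int_\Omega p_n \dv \bfu_n \dx$ and $\int_\Omega \vr_n \bfu_n\otimes\bfu_n:\nabla\bfu_n\dx$, which are not controlled by coercivity and Poincar\'e alone but vanish thanks to $\dv(\vr_n\bfu_n)=0$ (this is exactly the role of \eqref{crug} and \eqref{cruun} in the paper's Step~1).
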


\begin{proof}
For the sake of simplicity, we will perform the proof for  $\gamma >3 $ and $d=3$.
The case $d=2$ and $\gamma >1$ is simpler, and the modifications to be done to adapt the proof to the two-dimensional case are mostly due to the fact that Sobolev embeddings differ.

 Let ($\bfu_n, p_n, \vr_n)$ be a weak solution of Problem \eqref{pbcont_w}--\eqref{pressure1} with $f_n$ and $M_n$ instead of $f$ and $M$.

The proof consists in $4$ steps.
In Step~1, we obtain some estimates on $(\bfu_n,p_n,\vr_n)$.
These estimates imply the convergence, in an appropriate sense, of $(\bfu_n,p_n,\vr_n)$ to some $(\bfu,p,\vr)$, up to a subsequence.
Then, it is quite easy to prove that $(\bfu,p,\vr)$ satisfies \refe{contf}, \refe{movf}  and a part of \refe{eosf} (this is Step~2) but it is not easy to prove that $p=\vr^\gamma$ since, using the estimates of Step~1, the convergence of $p_n$ and $\vr_n$ is only weak (and $\gamma \ne 1$).
In Step~3, we prove the convergence of the integral of $p_n \vr_n$ to the integral of $p\vr$.
This allows in Step~4 to obtain the ``strong'' convergence of $\vr_n$ (or $p_n$) and to conclude the proof.

We recall Lemma 2.1 of \cite{eymard2010convergent}, which is crucial for Steps~1 and~3 of the proof. 
This lemma states that if $\vr \in L^{2\gamma}(\Omega)$, $\gamma>1$, $\vr \ge 0$ a.e. in $\Omega$, $\bfu \in (H^1_0(\Omega))^3$ and $(\vr, \bfu)$ satisfies \refe{contf}, then we have:
\begin{equation}
\int_{\Omega} \vr \dv \bu \dx =0
\label{cruun}
\end{equation}
and 
\begin{equation}
\int_{\Omega} \vr^{\gamma}  \dv \bu \dx =0.
\label{crug}
\end{equation}
This result is in fact also true for $\gamma=1$ \cite[Lemma B1]{fettah2012numerical}.
In Step~1 below, we use \refe{crug} (in fact, we only need $\int_{\Omega} \vr^{\gamma} \dv \bu \dx \le 0$ and it is this weaker result
which will be adapted and used for the approximate solution obtained by a numerical scheme).
In Step~3, we use \refe{cruun}.

\medskip
 
{\bf Step 1. Estimates.} We recall that ($\bfu_n, p_n, \vr_n)$ satisfies \eqref{eq:weak} with $f_n$ and $M_n$.

 {\it 1.a Estimate on the velocity.} Taking $\bfu_n$ as a test function in \eqref{movf}, we get:
\begin{multline*}
\mu \int_\Omega \grad \bfu_n : \grad \bfu_n\dx + (\mu+\lambda)\int_\Omega (\dv \bu_n)^2 \dx  - {\int_\Omega \vr_n \bfu_n \otimes \bfu_n : \grad \bfu_n\dx} \\ -  {\int_\Omega p_n \dv \bfu_n \dx} = \int_\Omega \bm{f}_n \cdot \bfu_n\dx.
\end{multline*}
Note that, since $\gamma >3$, we have $ \vr_n \bfu_n \otimes \bfu_n \in L^2(\Omega)^{3 \times 3} $, and, by density of $C_c^\infty(\Omega)^d$ in $L^2(\Omega)^d$, $\bfu_n$ is indeed an admissible test function.
But $p_n=\vr_n^\gamma$ a.e. in $\Omega$ and $\div(\vr_n \bfu_n)=0$ (in the sense of \eqref{contf}), then using \eqref{crug}
(with $\vr_n$ and $\bfu_n$)
\begin{equation*}
\int_\Omega p_n \div \bu_n \, \dx=0.
\end{equation*}
Again thanks to the mass equation \eqref{contf}, and to the fact that $\vr_n \in L^{2\gamma} (\Omega) \subset L^6(\Omega)$ a straightforward computation gives
\begin{equation*}
{\int_\Omega \vr_n \bfu_n \otimes \bfu_n : \grad \bfu_n \, \dx=0}.
\end{equation*}
Hence, there exists $C_1$, only depending on the $L^2-$bound of $(\bm{f}_n)_\nnn$,  on $\Omega$ and on $\mu$, such that:
\begin{equation}
\norm{\bfu_n}{(H^1_0(\Omega))^3} \le C_1.
\label{estu}
\end{equation}

{\it  1.b Estimate on the pressure.} 
In order to obtain an estimate on $p_n$ in $L^2(\Omega)$, we now use the two following lemmas. The first one is due to Bogovski,  see e.g. \cite[Section 3.3]{straskraba2004introduction} or \cite[Theorem 10.1]{feireisl2009singular} for a proof.

\begin{lm}
Let $\Omega$ be a bounded Lipschitz domain of $\R^d$ $ (d\ge1)$.
Let $r \in (1,+\infty)$.
Let $q \in L^{r}(\Omega)$ such that $\int_{\Omega} q\dx =0$. Then, there exists $\bv  \in (W^{1,r}_0(\Omega))^d$ such that $\div \bv =q$ a.e. in $\Omega$ and $\norm{\bv}{(W^{1,r}_0(\Omega))^d}  \le C_2 \norm{q}{L^{r}(\Omega)}$ with $C_2$ depending only on $\Omega$ and $r$.
\label{lem:bogos}
\end{lm}

The following lemma is a straightforward consequence of \cite[Lemma 5.4]{fettah2014existence}.

\begin{lm}\label{estl2} 
Let $\Omega$ be a bounded Lipschitz domain of $\R^d$ $ (d\ge1)$  and $p \in L^2(\Omega) $ such that  $p \ge 0 $ a.e in $\Omega$.
We assume that there exist $a > 0$, $b, c \in \R$ and $ r \in (0,1)$ such that
\begin{equation*}
\left\{
\begin{array}{l}
  \| p -m(p) \|_{L^2(\Omega)} \le a \normexp{p}{L^2(\Omega)}{r} + b,
  \\ \displaystyle
  \int_\Omega p^r \dx \le c,
\end{array}
\right.
\end{equation*}
where $m(p)=\frac{1}{|\Omega|} \int_\Omega p \dx$ is the mean value of $p$. Then, there exists $C$ only depending on $\Omega, a, b, c$ and $r$ such that 
$ \| p \|_{L^2(\Omega)} \le C.$
\end{lm}

Let $m_n=\frac 1 {\vert \Omega \vert} \int_\Omega p_n\dx$; thanks to Lemma \ref{lem:bogos} with $r =2$, there exists $\bv_n \in H^1_0(\Omega)^3$ such that $\div \bv_n =p_n-m_n$ and 
\begin{equation}
\norm{\bv_n}{(H^{1}_0(\Omega))^3}  \le C_2 \norm{p_n-m_n}{L^{2}(\Omega)}.   \label{eq:necas}                                                                                                                                                                          \end{equation}
Taking $\bv_n$ as a test function in  \eqref{movf} yields:
\begin{multline}
\mu \int_\Omega \grad \bfu_n : \grad \bv_n\dx+(\mu+\lambda)\int_\Omega \dv\bfu_n \dv\bv_n\dx -\int_\Omega \vr_n \bfu_n \otimes \bfu_n : \grad \bv_n \dx \\ - \int_\Omega p_n \dv \bv_n\dx = \int _\Omega \bm{f}_n  \cdot \bv_n\dx. \label{eq:biarritz1}
\end{multline}
Since  $\displaystyle \int_\Omega \dv \bu_n \dx = \int_\Omega \dv\bv_n\dx=0$, we get:
\[
\int_\Omega (p_n-m_n)^2\dx = \int_\Omega \bigl( -\bm{f}_n  \cdot \bv_n + \mu \grad \bfu_n : \grad \bv_n +(\mu+\lambda)p_n \dv \bfu_n -\vr_n \bfu_n \otimes \bfu_n : \grad \bv_n\bigr)\dx.
\]
Since $\norm{\bfu_n}{(H^1_0(\Omega))^3} \le C_1$ and $H^1_0(\Omega)$ is continuously embedded in $L^6(\Omega)$, we get that:
\begin{equation}
\int_\Omega \vr_n \bfu_n \otimes \bfu_n : \grad v_n\dx \le \norm{\vr_n}{L^6(\Omega)} \normd{\bfu_n}{L^6(\Omega)^3} \norm{\bv_n}{(H^1(\Omega))^3}.
\label{eq:biarritz2}
\end{equation}
From \eqref{eq:biarritz1},  \eqref{eq:biarritz2} and \eqref{eq:necas}, since $2 \gamma \ge 6$ and $p_n=\vr_n^\gamma$, we get:
\[
\norm{p_n-m_n}{L^2(\Omega)} \le C_3\,(1+ \norm{\vr_n}{L^6(\Omega)}) \le C_4\, (1+ \norm{\vr_n}{L^{2\gamma}(\Omega)})
\le C_4\, (1+  \normexp{p_n}{L^2(\Omega)}{1/\gamma}).
\]
Since $\int_\Omega p_n^{1/\gamma} \dx = \int_\Omega \vr_n\dx \le \sup\{M_k, k \in \N\}$, we get from Lemma \ref{estl2} that
$
\norm{p_n}{L^2(\Omega)} \le C_5,
$
where $C_5$ depends only on the $L^2-$bound on $(\bm{f}_n)_\nnn$, the bound on $(M_n)_\nnn$, $\gamma$, $\mu$, $\lambda$ and $\Omega$.
Thanks to the equation of state, we have $p_n=\vr_n^\gamma$ a.e. in $\Omega$, and therefore
$
\norm{\vr_n}{L^{2\gamma} (\Omega)} \le C_6=C_5^{1/ \gamma}.
$

\vspace{1cm}

 {\bf Step 2. Passing to the limit on the equations \refe{contf}, \refe{movf} and a part of \refe{eosf}}.

The estimates obtained in Step~1 yield that, up to a subsequence, as $\nti$:
\[
\begin{array}{l}\displaystyle
\bfu_n \tends u \textrm{ in } L^q(\Omega)^3 \ \textrm{for any} \ 1 \le q<6 \textrm{ and weakly in }  H^1_0(\Omega)^3,
\\[1ex] \displaystyle
p_n \tends p \textrm{ weakly in } L^2(\Omega),
\\[1ex] \displaystyle
\vr_n \tends \vr \textrm{ weakly in } L^{2\gamma}(\Omega).
\end{array}
\]
Since ${\vr_n \tends \vr}$ weakly in $L^{2 \gamma}(\Omega)$, with $2 \gamma > 6>\frac 3 2$, and ${\bfu_n \tends \bfu}$ in $L^q(\Omega)$ for all $q <6$ (and $\frac 2 3 + \frac 1 6 + \frac 1 6=1$), we have that ${\vr_n \bfu_n \otimes \bfu_n \tends \vr \bfu \otimes \bfu}$ weakly  in $L^1(\Omega)$.
Moreover, $  { \grad \bfu_n \tends \grad \bfu}$ weakly in $L^2(\Omega)^3$, ${p_n \tends p}$ weakly in $L^{2}(\Omega)$
and ${\bm{f}_n \tends \bm{f}}$ weakly in $L^{2}(\Omega)^3$.
Therefore, passing to the limit  in \refe{movf} (the weak momentum equation)  for $(\bfu_n, p_n, \vr_n)$, we obtain \refe{movf} for $(\bfu,p,\vr)$.

Since $\vr_n \tends \vr$ weakly in $L^{2 \gamma}(\Omega)$, with $2 \gamma > \frac 6 5 $
and $\bfu_n \tends \bfu$ in $L^q(\Omega)$ for all $q <6$, we get that $\vr_n \bfu_n \tends \vr \bfu$ weakly in $L^1(\Omega)$.
Then passing to the limit on \refe{contf} (the weak mass balance) for $(\bfu_n, \vr_n)$, we obtain \refe{contf} for $(\bfu,\vr)$.

%
%

The weak convergence of $\vr_n$ to $\vr$ and the fact that $\vr_n \ge 0$ a.e.  in $\Omega$ gives that
$\vr \ge 0$ a.e.  in $\Omega$ (indeed, taking $\psi = 1_{\vr <0}$ as test function  gives $\int_{\Omega} \vr \psi\dx
=\lim_{\nti} \int_{\Omega} \vr_n \psi\dx \ge 0$, which proves that $\vr \psi=0$ a.e.).
The weak convergence of $\vr_n$ to $\vr$ also gives   (taking $\psi=1$ as test function) that $\int_{\Omega} \vr\dx =M$. 
Therefore, $(\bu,p,\vr)$ is a weak solution of the momentum equation and of the mass balance equation satisfying $ \vr \ge 0$ a.e in $\Omega$ and $ \int_\Omega \vr \dx =M$.
Hence Theorem \ref{continuityws} is proved except for the fact that $p= \vr^\gamma$ a.e.  in $\Omega$. This is the objective of the last two steps, where we also prove a ``strong'' convergence of $\vr_n$ and $p_n$.
We need to prove that $p=\vr^\gamma \textrm{ in } \Omega$, even though we only have a weak convergence of $p_n$ and $\vr_n$, and $\gamma >1$.
The idea (for $d=2$ or $d=3$, $\gamma > 3$) is to prove $\int_\Omega  p_n \vr_n  \tends \int_\Omega p \vr $ and deduce the a.e. convergence (of $ p_n$ and $ \vr_n$) and $p=\vr^\gamma$.

{\bf Step 3. Proving the convergence of the effective viscous flux and $\displaystyle \int_{\Omega}  \vr_n p_n\dx  \tends \int_{\Omega} \vr p\dx$.}

Since the sequence $(\vr_n)_\nnn$ is bounded in $L^2(\Omega)$,
The result of \cite[Lemma B.8]{eymard2010convergent} gives the existence of a bounded sequence $(\bv_n)_\nnn$ in $H^1(\Omega)^3$ such that $\div \bv_n =\vr_n$ and $\curl \bv_n =0$.
It is possible to assume (up to a subsequence) that $\bv_n \tends v$ in $L^2(\Omega)^3$ and weakly in $H^1(\Omega)^3$.
Passing to the limit in the preceding equations gives $\div \bv =\vr$ and $\curl \bv =0$.

Let $\varphi \in C^\infty_c(\Omega)$ (so that $\varphi \bv_n \in H^1_0(\Omega)^3$). 
Taking $\bv=\varphi \bv_n $ in  the weak momentum equation \eqref{movf} written for $(\bu_n,p_n,\vr_n)$) leads to:
\begin{multline}
\mu \int_{\Omega} \grad \bfu_n : \grad (\varphi \bv_n) \,\dx +(\mu+\lambda) \int_\Omega \dv \bfu_n \dv( \varphi \bv_n) \dx
  - \int_{\Omega} p_n \div (\varphi \bv_n)\, \dx \\ =  \int_\Omega \vr_n \bfu_n \otimes \bfu_n : \grad (\varphi \bv_n)\, \dx
  +\int _{\Omega} \bm{f}_n \cdot (\varphi \bv_n) \,\dx.
  \label{c-weakmom}
\end{multline}
The choice of $\bv_n$ gives $\div(\varphi \bv_n)= \varphi \vr_n  + \bv_n \cdot \grad \varphi$ and $\curl \varphi \bv_n  =L(\varphi)\bv_n$, where $L(\varphi)$ is a matrix with entries involving the first order derivatives of $\varphi$.
Noting that 
\begin{equation}\label{graddivrot}
\int_\Omega \nabla \bar \bu : \nabla \bar \bv \dx = \int_\Omega \dv \bar \bu \dv \bar \bv \dx + \int_\Omega \curl \bar \bu \cdot \curl \bar \bv \dx, \textrm{ for all } (\bar \bu,\bar \bv) \in H_0^1(\Omega)^3,
\end{equation}
the  equality \eqref{c-weakmom} leads to:
\begin{multline*}
 \int_{\Omega} \Big((2\mu+\lambda)\dv \bu_n - p_n\Big) \vr_n \varphi \dx +  \int_{\Omega}\Big((2\mu+\lambda) \dv \bu_n - p_n\Big) \bv_n \cdot \grad \varphi \dx \\ +\mu \int \curl \bfu_n  \cdot  L(\varphi) \bv_n  \dx =  \int_\Omega \vr_n \bfu_n \otimes \bfu_n : \grad ( \varphi \bv_n)\, \dx+
\int _{\Omega} \bm{f}_n \cdot (\varphi \bv_n) \,\dx.
\end{multline*}
Thanks to the weak  convergence of $\bfu_n$ in $H^1_0(\Omega)^d$ to $\bfu$, the weak convergence of $p_n$ in $L^2(\Omega)$ to $p$, the weak convergence of $\bm{f}_n$ in $L^2(\Omega)$ to $\bm{f}$  and the convergence of $\bv_n$  in $L^2(\Omega)^d$ to $\bv$, we obtain:
\begin{multline}
\lim_{\nti} \int_{\Omega}  \Big(\Big((2\mu+\lambda)\div \bfu_n  - p_n\Big) \vr_n \varphi - \vr_n \bfu_n \otimes \bfu_n :  \grad (\varphi v_n) \Big)\, \dx= \\ \int _{\Omega} \bm{f} \cdot (\varphi \bv) \dx + \int_{\Omega} \Big(p-(2\mu+\lambda)\dv \bu\Big) \bv \cdot \grad \varphi\dx - \mu  \int _{\Omega)}\curl \bfu  \cdot  L(\varphi)\bv \dx. 
\label{cpc}
\end{multline}
But, thanks to the weak momentum equation \refe{movf} for $(\bu,p,\vr)$, we have
\begin{multline*}
\mu \int_{\Omega} \grad \bfu : \grad (\varphi \bv )\,\dx +(\mu+\lambda) \int_\Omega \dv \bfu \dv (\varphi \bv) \dx  - \int_{\Omega} p \div (\varphi \bv )\, \dx \\ =  \int_{\Omega} \vr  \bfu  \otimes \bfu  : \grad ( \varphi \bv )\, \dx + \int _{\Omega} \bm{f} \cdot(\varphi \bv )\,\dx,
\end{multline*}
or equivalently, thanks to \eqref{graddivrot}:
\begin{multline*}
\int_{\Omega} \Big((2\mu+\lambda) \dv \bu -p \Big) \dv (\varphi \bv) \dx  +\mu \int_{\Omega} \curl  \bfu  \cdot \curl(\varphi \bv )\,\dx \\ =  \int_\Omega \vr  \bfu  \otimes \bfu  : \grad (\varphi \bv  )\, \dx+ \int _{\Omega} f \cdot (\varphi \bv ) \,\dx.
\end{multline*}
Since $\div \bv =\vr$ and  $\curl \bv =0$, we obtain:
\begin{multline*}
\int_{\Omega} \Big( (2\mu+\lambda)\dv \bu-p\Big) \vr \vphi\dx - \int_\Omega \vr  \bfu  \otimes \bfu  : \grad (\varphi \bv)  \, \dx=  \\  \int_{\Omega} \bm{f} \cdot (\varphi \bv)\dx + \int_{\Omega} (p- \Big(2\mu+\lambda)\dv \bu \Big) \bv \cdot \grad \varphi\dx - \mu  \int_{\Omega} \curl \bfu  \cdot L(\varphi)\bv\dx.
\end{multline*}
Let us assume momentarily that:
\begin{equation}
\int_{\Omega}    \vr_n \bfu_n \otimes \bfu_n :  \grad (\varphi \bv_n) \, \dx  \to \int_\Omega \vr  \bfu  \otimes \bfu  : \grad (\varphi \bv)  \, \dx \mbox{ as } \nti.
\label{ass:kk}
\end{equation}
We then obtain thanks to  \refe{cpc}:
\begin{equation}
\lim_{\nti} \int_{\Omega} \Big(p_n -(2\mu+\lambda) \dv \bu_n\Big) \vr_n \varphi \dx= \int_{\Omega} \Big(p -(2\mu+\lambda) \dv \bu \Big) \vr \varphi\dx.
\label{palnl}
\end{equation}
The quantity $p-(\lambda+2\mu)\dv \bu $ is usually  called the effective viscous flux. 
This quantity enjoys many remarkable properties for which we refer to Hoff \cite{hoff1995strong}, Lions \cite{lions1998mathematical}, or Serre \cite{serre1991variations}. 
Note that this quantity is the amplitude of the normal viscous stress augmented by the hydrostatic pressure $p$, that is, the ``real'' pressure acting on a volume element of the fluid.
In \refe{palnl}, the function $\vphi$ is an arbitrary element of $C^\infty_c(\Omega)$. 
Then as in \cite{eymard2010convergent}, we remark that it is possible to take $\vphi=1$ in \refe{palnl}, thanks to the fact that $(p_n - (2\mu+\lambda)\dv \bu_n) \vr_n \in L^r(\Omega)$ for some $r >1$ (see \cite[Lemma B.2]{eymard2010convergent}). 

Using \refe{cruun}, which holds by  \cite[Lemma 2.1]{eymard2010convergent} thanks to the fact that $\div(\vr_n \bfu_n) =\div (\vr \bfu)=0$ (in the sense of \eqref{contf}), we have
$\int_{\Omega} \vr_n \div \bfu_n \dx=\int_{\Omega} \vr \ \div \bfu \dx=0.$
Therefore, \refe{palnl} yields:
\begin{equation}
\lim_{\nti} \int_{\Omega} p_n  \vr_n\dx= \int_{\Omega} p \vr\dx.
\label{palnlff}
\end{equation}
\begin{rmq}
The equality in \refe{palnlff} is not necessary in Step 4; in fact, it is sufficient to have
$ \liminf_\nti \int_{\Omega} p_n \vr_n\dx $ $ \le  \int_{\Omega} p \vr\dx.$
Then, instead of  $\int_{\Omega} \vr_n \div \bfu_n \dx=0$,  it is sufficient to have
$ \liminf_\nti \int_{\Omega} \vr_n \div \bfu_n \dx \le 0.$
This will be the case in the framework of an approximation by a numerical scheme.
\end{rmq}

In order to conclude Step 3, it remains to show \eqref{ass:kk}.

We remark that, since $\div(\vr_n \bfu_n)=0$ and $(\vr_n,\bfu_n) \in L^6(\Omega)\times H_0^1(\Omega)^3 $,
\begin{equation}
 \int_\Omega \vr_n \bfu_n \otimes \bfu_n : \grad (\varphi \bv_n) \,\dx = -\int_\Omega (\vr_n \bfu_n \cdot \grad )\bfu_n \cdot(\varphi \bv_n) \,\dx.
 \label{conv-rewrite}
\end{equation}

The sequence $((\vr_n \bfu_n \cdot \grad )\bfu_n)_\nnn$ is bounded in $L^r(\Omega)^3$, with
$\frac 1 r = \frac 1 2 + \frac 1 6 + \frac 1 {2\gamma}$. 
Since $\gamma > 3$, we have $r >\frac 6 5$.
Then, up to a subsequence, $(\vr_n \bfu_n \cdot \grad )\bfu_n$ tends to some function $G$ weakly in $L^r(\Omega)^3$.
Since $\bv_n \tends \bv$ in $L^s(\Omega)^3$ for all $s<6$ and therefore for $s = \frac{r}{r-1}$,
we deduce that:
\[
	\int_\Omega (\vr_n \bfu_n \cdot \grad )\bfu_n \cdot (\varphi \bv_n)\dx \tends \int_\Omega G \cdot (\varphi \bv)\dx.
\]
Moreover, for a fixed $\bm{w} \in H^1_0(\Omega)^3$,
\begin{equation*}
\int_\Omega (\vr_n \bfu_n \cdot \grad )\bfu_n \cdot \bm{w}\dx= - \int_\Omega \vr_n \bfu_n\otimes \bfu_n : \grad \bm{w} \dx \tends   - \int_\Omega \vr \bfu \otimes \bfu : \grad \bm{w} \dx.
\end{equation*}
But, since  $ \dv(\vr \bu) =0 $ and $ (\vr,\bfu) \in L^6(\Omega) \times H_0^1(\Omega)^3 $, we have
\begin{equation*}
- \int_\Omega \vr \bfu \otimes \bfu : \grad \bm{w} \dx = \int_\Omega (\vr \bfu \cdot \grad )\bfu \cdot \bm{w} \,\dx.
\end{equation*}
We thus get that $G=(\vr \bfu \cdot \grad) \bfu$, which concludes the proof of ($\ref{ass:kk}$).

\medskip

{\bf Step 4. Passing to the limit on the EOS and ``strong'' convergence of $\vr_n$ and $p_n$.}
The end of the proof is exactly the same as Step~4 of \cite[Proof of Theorem 2.2]{eymard2010convergent}; it is reproduced here for the sake of completeness.
For $\nnn$, let $G_n= (\vr_n^\gamma - \vr^\gamma)(\vr_n - \vr)$.
For all $\nnn$, the function $G_n$ belongs to $L^1(\Omega)$ and $G_n \ge 0$ a.e. in $\Omega$.
Futhermore $G_n=(p_n -\vr^\gamma)(\vr_n - \vr)= p_n \vr_n-p_n \vr - \vr^\gamma \vr_n  + \vr^\gamma \vr$ and
$\displaystyle
\int_\Omega G_n \dx = \int_\Omega p_n \vr_n \dx - \int_\Omega p_n \vr \dx  - \int_\Omega  \vr^\gamma \vr_n \dx + \int_\Omega \vr^\gamma \vr \dx.
$

Using the weak convergence in $L^2(\Omega)$ of $p_n$ to $p$ and of $\vr_n$ to $\vr$, the fact that $\vr, \vr^\gamma \in L^2(\Omega)$
and \eqref{palnlff} gives
$
\lim_\nti \int_\Omega G_n \dx =0,
$
that is $G_n \tends 0$ in $L^1(\Omega)$.
Then, up to a subsequence, we have $G_n \tends 0$ a.e. in $\Omega$.
Since $y \mapsto y^\gamma$ is an increasing function on $\xR_+$, we deduce that $\vr_n \tends \vr$ a.e., as $\nti$.
Then, we also have $p_n = \vr_n^\gamma \tends \vr^\gamma$ a.e..
Since $(\vr_n)_\nnn$ is bounded in $L^{2 \gamma}(\Omega)$ and $(p_n)_\nnn$ is bounded in $L^2(\Omega)$, we obtain, as $\nti$:
\[
\begin{array}{l} \displaystyle
\vr_n \tends \vr \textrm{ in } L^q(\Omega) \textrm{ for all } 1 \le q < 2\gamma,
\\[1ex] \displaystyle
p_n \tends \vr^\gamma \textrm{ in } L^q(\Omega) \textrm{ for all } 1 \le q < 2.
\end{array}
\]
Since we already know that $p_n \tends p$ weakly in $L^2(\Omega)$, we necessarily have (by uniqueness of the weak limit in $L^q(\Omega)$) that $p=\vr^\gamma$ a.e. in $\Omega$.
The proof of Theorem~\ref{cwrtd} is now complete.
\end{proof}

%
%

\section{The numerical scheme}\label{3}

\subsection{Mesh and discrete spaces}

We will now assume that the bounded domain $\Omega$ is \textit{MAC compatible} in the sense that $\bar \Omega$ is a finite union of (closed) rectangles ($d=2$) or (closed) orthogonal parallelepipeds ($d=3$) and, without loss of generality, we assume that the  edges (or faces) of these rectangles (or parallelepipeds) are orthogonal to the canonical basis vectors, denoted by $(\bfe_1, \ldots, \bfe_d)$.

\begin{df}[MAC grid]
\label{def:MACgrid}
A discretization of a MAC compatible bounded domain $\Omega$ with a MAC grid is defined by $\mathcal{D} = (\mesh, \edges)$,
where:
\begin{list}{-}{\itemsep=0.ex \topsep=0.5ex \leftmargin=1.cm \labelwidth=0.7cm \labelsep=0.3cm \itemindent=0.cm}
\item $\mesh$ stands for the primal grid, and consists in a regular structured partition of $\Omega$ in possibly non uniform rectangles ($d=2$) or rectangular parallelepipeds ($d=3$).
A generic cell of this grid is denoted by $K$, and its mass center by $\bfx_K$.
The scalar unknowns, namely the density and the pressure, are associated to this mesh, and $\mesh$ is also sometimes referred as "the pressure mesh".

\item The set of all faces of the mesh is denoted by $\edges$; we have $\edges= \edgesint \cup \edgesext$, where $\edgesint$ (resp. $\edgesext$) are the edges of $\edges$ that lie in the interior (resp. on the boundary) of the domain.  
The set of faces that are orthogonal to the $i^{th}$ unit vector $\bfe_{i}$ of the canonical basis of $\mathbb{R}^d$ is denoted by $\edgesi$, for $i = 1,\ldots,d$.
We then have $\edgesi= \edgesinti \cup \edgesexti$, where $\edgesinti$  (resp. $\edgesexti$) are the edges of $\edgesi$ that lie in the interior (resp. on the boundary) of the domain.

For each $\edge\in\edges$, we write that $\edge = K \vert L$ if $\edge = \partial K \cap \partial L$.
A dual cell $D_{\edge}$ associated to a face $\edge \in\edges$ is defined as follows:  
\begin{list}{$\ast$}{\itemsep=0.ex \topsep=0.ex  \leftmargin=1.cm \labelwidth=0.7cm \labelsep=0.1cm \itemindent=0.cm}
\item if $\edge=K|L \in \edgesint$ then  $D_{\edge} = D_{ K,\edge}\cup D_{ L,\edge}$, where $D_{ K,\edge}$ 
(resp. $D_{ L,\edge}$) is the half-part of $K$ (resp. $L$) adjacent to $\edge$ (see Fig. \ref{fig:mesh} for the two-dimensional case)~; 
\item if $\edge \in \edgesext$ is adjacent to the cell $K$, then $D_\edge=D_{ K,\edge}$.
\end{list}
We obtain $d$ partitions of the computational domain $\Omega$ as follows:
\[
\Omega = \cup_{\edge \in \edgesi} D_\edge,\quad 1 \leq i \leq d,
\]
and the $i^{th}$ of these partitions is called $i^{th}$ dual mesh, and is associated to the $i^{th}$ velocity component, in a sense which is precised below.
The set of the faces of the $i^{th}$ dual mesh is denoted by $\edgesdi$ and is decomposed into the internal and boundary edges: $\edgesdi = \edgesdinti\cup \edgesdexti$.
The dual face separating two dual cells $D_\edge$ and $D_{\edge'}$ is denoted by $\edged=\edge|\edge'$.
\end{list}
\end{df}

To define the scheme, we need some additional notations.
The set of faces of a primal cell $K$ and a dual cell $D_\edge$ are denoted by $\edgesK$ and $\edgesd(D_\edge)$ respectively.
For $\edge \in \edges$, we denote by $\bfx_\edge$ the mass center of $\edge$.

In some cases, we need to specify the orientation of a geometrical quantity with respect to the axis:
\begin{list}{-}{\itemsep=0.ex \topsep=0.5ex \leftmargin=1.cm \labelwidth=0.7cm \labelsep=0.3cm \itemindent=0.cm}
\item a primal cell $K$ will be denoted $K = [\overrightarrow{\edge \edge'}]$ if 
there exists $i\in\llbracket 1, d\rrbracket$ and $\edge, \edge' \in \edgesi \cap \edges(K)$
such that $(\bfx_{\edge'} - \bfx_{\edge}) \cdot \bfe_i >0$;
\item we write $\edge =\overrightarrow{K \vert L}$ if  $\edge \in\edgesi$ and $\overrightarrow{\bfx_{K}\!\bfx_{L}}\cdot \bfe_{i}>0$ for some $i \in \llbracket 1,d \rrbracket$;
\item the dual face $\edged$ separating $D_\edge$ and $D_{\edge'}$ is written $\edged = \overrightarrow{\edge\!\vert{\edge'}}$ if $\overrightarrow{\bfx_{\edge}\!\bfx_{\edge'}}\cdot \bfe_{i}>0$ for some $i \in\llbracket 1, d\rrbracket$.
\end{list}
For the definition of the discrete momentum diffusion operator, we associate to any dual face $\edged$ a distance $d_\edged$ as sketched in Figure \ref{fig:mesh}. 
For a dual face $\edged \in \edgesd(D_\edge), \edge \in \edgesi$, $i \in {\llbracket 1, d\rrbracket}$, the distance $d_\edged$ is defined by:
 \begin{align}
  d_{\edged} =  \begin{cases}
    d(\bfx_{\edge},\bfx_{\edge'}) & \mbox{if} \  \edged = \edge\! \vert\edge'\in \edgesdinti,\\[1ex]
    d(\bfx_{\edge},\edged) & \mbox{if} \ \edged \in \edgesdexti \cap \tilde{\edges}(D_{\edge})
  \end{cases}
  \label{depsilon}
 \end{align}
where $d(\cdot,\cdot)$ denotes the Euclidean distance in $\xR^d$. 
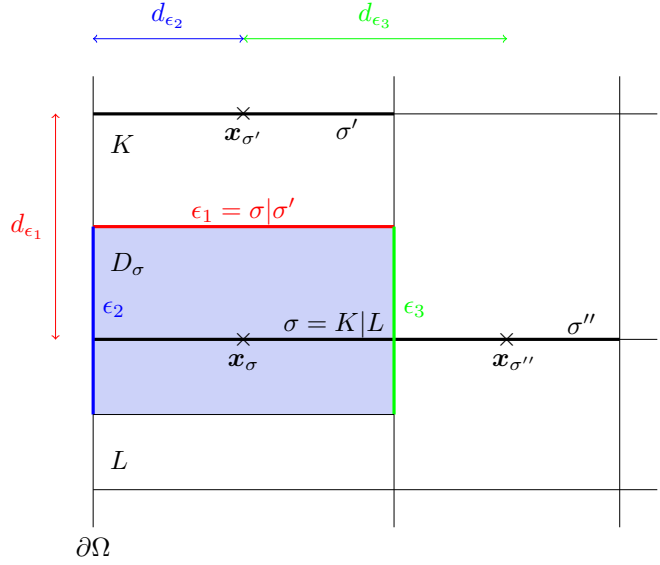
\begin{figure}[hbt] 
\centering
\begin{tikzpicture}
\fill[green!12!blue!20!white] (0.5,1.5)--(4.5,1.5)--(4.5,4)--(0.5,4)--cycle;
\path node at (0.6,3.5) [anchor= west]{$D_\edge$};

\draw[very thin] (0.5,0.5)--(8.,0.5);
\draw[very thin] (0.5,2.5)--(8,2.5);
\draw[very thin] (0.5,5.5)--(8,5.5);
\draw[very thin] (0.5,0.)--(0.5,6.);
\draw[very thin] (4.5,0.)--(4.5,6.);
\draw[very thin] (7.5,0.)--(7.5,6.);
\draw[very thick] (0.5,2.5)--(4.5,2.5);
\draw[very thick] (0.5,5.5)--(4.5,5.5);
\draw[very thick] (4.5,2.5)--(7.5,2.5);

\draw[very thick, red] (0.5,4)--(4.5,4);
\draw[very thick, green] (4.5,1.5)--(4.5,4);
\draw[very thick, blue] (0.5,1.5)--(0.5,4);
\draw[very thin] (0.5,1.5)--(4.5,1.5);

\path node at (0.6,5.1) [anchor= west]{$K$};
\path node at (0.6,0.9) [anchor= west]{$L$};
\path node at (3.7,2.7) {$\edge=K|L$};
\path node at (7,2.7) {$\edge''$};
\path node at (2.5,2.5) {$\times$};
\path node at (2.5,5.5) {$\times$};
\path node at (6,2.5) {$\times$};

\path node at (2.5,5.2) {$\bfx_{\edge'}$};
\path node at (2.5,2.2) {$\bfx_{\edge}$};
\path node at (6.1,2.2) {$\bfx_{\edge''}$};
\path node at (0.5,2.9) [anchor= west]{\textcolor{blue}{$\edged_2$}};
\path node at (4.5,2.9) [anchor= west]{\textcolor{green}{$\edged_3$}};
\path node at (3.88,5.3) {$\edge'$};
\path node at (2.5,4.2) {\textcolor{red}{$\edged_1=\edge|\edge'$}};
\path node at (0.5,-0.02) [anchor= north]{$\partial\Omega$};
\draw[very thin, green, <->] (2.5,6.5)--(6,6.5); \path node at (4.25,6.52) [anchor= south]
{\textcolor{green}{$d_{\edged_3}$}};
\draw[very thin, blue, <->] (0.5,6.5)--(2.5,6.5);\path node at (1.5,6.52) [anchor= south]
{\textcolor{blue}{$d_{\edged_2}$}};
\draw[very thin, red, <->] (0,2.5)--(0,5.5); \path node at (-0.02,4) [anchor= east]
{\textcolor{red}{$d_{\edged_1}$}};

\end{tikzpicture}
\caption{Notations for control volumes and dual cells (for the second component of the velocity).}
\label{fig:mesh}
\end{figure}
We also define the size of the mesh by 
 $
 h_\mesh=\max\{\diam(K),  K\in\mesh\}.
$
The regularity of $\eta_{\mesh} $ of the mesh is defined by
\begin{equation}
\eta_{\mesh}  =\frac 1 {h _\mesh} \min_{K \in \mesh}\ \min_{1 \le i \le d}
\ \{ d(\bfx_\edge, \bfx_{\edge'}), ~\edge,\edge' \in \E^{(i)}(K) \}.
\label{regmesh} \end{equation}
In other words, $ \eta_\mesh $ is such that
\begin{equation*}
\eta_\mesh h_\mesh \le d(\bfx_\edge, \bfx_{\edge'}) \le h_\mesh, \ \forall \edge,\edge' \in \E^{(i)}(K), \forall  i=1,...,d, \, \forall K \in \mesh.
\end{equation*}

The discrete velocity unknowns are associated to the velocity cells and are denoted by $(u_{\edge})_{\edge\in\edgesi}$ for each component $u_i$ of the discrete velocity, $1\le i \le d$, while the discrete density and pressure unknowns are associated to the primal cells and are respectively denoted by $(\vr_{K})_{K\in\mesh}$ and $(p_{K})_{K\in\mesh}$.
\begin{df}[Discrete spaces]
\label{discretespace}
Let $\mathcal{D}=(\mesh,\edges)$ be a MAC grid in the sense of Definition \ref{def:MACgrid}. 
The discrete density and pressure space $L_{\mesh}$ is   the set of piecewise constant functions over the grid cells $K$ of $\mesh$, and  the discrete $i^{th}$ velocity space $\Hmeshi$ is the set of piecewise constant functions over   the grid cells $D_\edge~,\edge\in\edgesi$.
The Dirichlet boundary conditions ($\ref{ci2}$) are partly incorporated in the definition of the velocity spaces by introducing 
\[
 \Hmeshizero=\Bigl\{u\in\Hmeshi, \ u(\bfx)=0\ \forall \bfx\in D_{\edge},\ \edge \in \edgesexti \Bigr\} \subset \Hmeshi, i=1,\ldots,d.
\]
We then set $\Hmeshzero =  \prod_{i=1}^d  \Hmeshizero.$ 
Since we are dealing with piecewise constant functions, it is useful to introduce the characteristic functions $\characteristic_K$, for $K \in \mesh$, and $\characteristic_{D_\edge}$, for $\edge \in \edges$, defined by 
\[
 \characteristic_K(\bfx) = \begin{cases}
                 &1  \text{ if } \bfx \in K,\\ &0  \text{ if } \bfx \not \in K,\\
                \end{cases} 
\quad \characteristic_{D_\edge}(\bfx) = \begin{cases}
                  &1  \text{ if } \bfx \in D_\edge,\\ &0  \text{ if } \bfx \not \in D_\edge.
                \end{cases} 
\]

We can then write the functions $\bfu \in \Hmeshzero$ and $p, \vr \in L_\mesh$ as
\[
\bfu = (u_1,\ldots,u_d) \mbox{ with } u_i =  \displaystyle \sum_{\edge\in \edgesi_{\intt}} u_{\edge}\characteristic_{D_\edge},
\mbox{ for } i \in \llbracket 1, d \rrbracket,\quad p = \displaystyle \sum_{K \in \mesh} p_K\characteristic_{K}, \, \vr = \displaystyle \sum_{K \in \mesh} \vr_K\characteristic_{K}.
\]
\end{df}
%
%
\subsection{The numerical scheme}

Let  $\disc = (\mesh,\E) $ be a MAC grid of the computational domain $\Omega \subset \R^d$. Let  $h_\mesh$ be the size of the mesh.
Let $\alpha >1 $ and $\cstab >0$ be  given.
Let $ \bm{f} \in L^2(\Omega)^d $ and $M>0$, and let $ \vr^\star = M/|\Omega|$. 
We consider the following numerical scheme:

\textit{Find $(\bfu,p,\vr) \in  \Hmeshzero \times L_\mesh \times L_\mesh $ such that, a.e in $\Omega$},
\begin{subequations}\label{probdis}
\begin{align}\label{dcont} 
& \dv_\mesh^{\upw} (\vr \bfu) + \cstab h_\mesh^\alpha ( \vr- \vr^\star)= 0,
\\[2ex]
 & \dv_{\widetilde\E} (\vr \bfu \otimes \bfu) + \nabla_{\E} p - \mu \Delta_{\E} \bfu - (\mu+\lambda)\nabla_{\E} \dv_\mesh \bfu = \mathcal{P}_{\edges}\bm{f}, \label{dmom}
  \\[2ex]
&  p = \vr^\gamma, \ \vr \ge 0,
\label{deos}
\end{align}
\end{subequations}
where the discrete operators are defined hereafter for  each equation.%
%
\subsubsection{The mass balance equation}\label{massbalance}

Equation \eqref{dcont}  is a finite volume discretization of the mass balance ($\ref{cont2}$) over the primal mesh.  
The discrete function $\dv_\mesh^{\upw} (\vr \bfu)  \in L_\mesh  $ is defined by
\begin{equation*}
\dv_\mesh^{\upw} (\vr \bfu) (\bfx) = \frac{1}{|K|} \sum_{\edge \in \E(K)} F_{K,\edge}, \ \forall \bfx \in K,
\end{equation*}
where $F_{K,\edge}$
stands for the upwind mass flux across $\edge$ outward $K$, which reads:
\begin{equation}\label{eq:massflux}
\forall \edge \in\edges(K), \qquad F_{K,\edge}= |\edge| \ \vr_\edge^{\upw}\ u_{K,\edge} \; \mbox{ with } \vr_\edge^{\upw}=\left| \begin{aligned} &
\vr_K \qquad \mbox{if } u_{K,\edge} \geq 0,
\\[1ex] &
\vr_L \qquad \mbox{otherwise}, 
\end{aligned} \right.  
\end{equation}
and where $u_{K,\edge}$ is an approximation of the normal velocity to the face $\edge$ outward $K$, defined by:
\begin{equation}\label{eq:edge_velo}
	u_{K,\edge} =  u_{\edge} \ \bfe_i \cdot \bfn_{K,\edge}  \mbox{ for } \edge \in \edges\ei \cap \E(K),
\end{equation}
where $\bfn_{K,\edge} $ denotes the unit normal vector to $\edge$ outward $K$.
Thanks to the boundary conditions, $u_{K,\edge}$ vanishes for any external face $\edge$, and so does $F_{K,\edge}$.
Any solution $(\vr,\bfu) \in L_\mesh \times \Hmeshzero $ to ($\ref{dcont}$) satisfies $ \vr_K > 0$ for all $K \in \mesh$ so that in particular ($\ref{deos}$) makes sense:
the positivity of the density $\vr$ in ($\ref{dcont}$) is not enforced in the scheme but results from the above upwind choice.  Indeed, for any velocity field, the upwinding ensures that the discrete mass balance ($\ref{dcont}$) is a linear system for $\vr$ whose matrix is invertible and has a non negative inverse \cite[ Lemma C.3]{fettah2012numerical} and this gives $ \vr_K > 0$ for all $K \in \mesh$ (thanks to $\vr^\star>0$).

Note also that we have the usual finite volume property of local conservativity of the mass flux through a primal face  
$\edge=K|L$ (\ie \ $F_{K,\edge}=-F_{L,\edge}$). 
For $ \edge =\overrightarrow{K \vert L} \in \E_{\intt} $, we also define
\begin{equation}\label{gap}
[\vr]_\edge = \vr_L-\vr_K.
\end{equation}
The artificial term $\cstab h_\mesh^\alpha ( \vr- \vr^\star)$ guarantees that the integral of the density over the computational domain is always $M$.
Indeed, summing \eqref{dcont} over $K \in \mesh$, and using  the conservativity of the flux through a primal face, immediately yields the total conservation of mass, which reads:
\begin{equation}\label{masscons}
	 \int_\Omega \vr \dx = M.
\end{equation}
The constant $\cstab$ is chosen so that a uniform (with respect to the mesh) bound  holds on the solutions to ($\ref{probdis}$); these bounds are stated in Proposition $\ref{prop:estimates}$. The proof of this proposition shows that $\cstab$ can be chosen sufficiently small with respect to the data (see \eqref{cstestab}).
However, in practice, $C_s$ may be set to 1, in which case, the uniform bounds stated in Proposition \ref{prop:estimates}  hold for $h_\mesh $ sufficiently small.
 
%
%
\subsubsection{The momentum balance equation}

We now turn to the discrete momentum balances \eqref{dmom}, which are obtained by discretizing the momentum balance equation \eqref{mov2} on the dual cells associated to the faces of the mesh.
In the right hand side of \eqref{dmom},  $\mathcal P_\edges$ denotes the cell mean-value operator defined for $ \bv=(v_1,...,v_d) \in L^2(\Omega)^d $ by  
\begin{align}
& \mathcal{P}_{\edges}\bfv =\begin{pmatrix}\mathcal{P}_{\edges}^{(1)} v_1, \cdots, \mathcal{P}_{\edges}^{(d)} v_d \end{pmatrix} \in  H_{\edges,0}^{(1)}\times\cdots\times H_{\edges,0}^{(d)}, 
  \mbox{ where, for }i= 1, \ldots d, \nonumber \\[2ex]
& \begin{matrix}                                                                                         
     \mathcal{P}_{\edges}^{(i)}: & L^2(\Omega)\longrightarrow \Hmeshizero \hfill \\
			   & \displaystyle  \vi\;\longmapsto \mathcal{P}^{(i)}_{\edges}\vi =
			     \sum_{\edge \in \edgesint^{(i)}} \left( \frac{1}{\vert D_\edge\vert}\int_{D_\edge} \vi(\bfx) \dx \right) \characteristic_{D_\edge}. 
  \end{matrix}
\label{interpedges} \end{align}

\noindent {\bf The discrete convective operator -} The discrete divergence of $ \vr \bfu \otimes \bfu $  is defined by  
\begin{equation}
  \dv_{\widetilde\E} (\vr \bfu \otimes \bfu) = ( \dv^{(1)}_{\widetilde\E}(\vr  \bfu u_1),..., \dv^{(d)}_{\widetilde\E}(\vr  \bfu u_d)) \in \Hmeshzero,
\end{equation}
where the $i^{th}$ component of the above operator reads:
\[
 \dv^{(i)}_{\widetilde\E}(\vr  \bfu u_i) (\bfx) = \frac{1}{|D_\edge|} \sum_{\edged\in\edgesd(D_\edge)} F_{\edge,\edged}\ u_{\edged}, \ \forall \bfx \in D_\edge, \ \edge \in \E_{\intt}^{(i)}.
\]
The expression $\fluxd$ stands for the mass flux through the dual face $\edged$, and $u_\edged $ is an approximation of $i^{th}$ component of the velocity over $\edged$.

Let us consider the momentum balance equation for the $i^{th}$ component of the velocity, and $\edge \in \E^{(i)}_{\intt}$, $\edge=K|L$.
We have to distinguish two cases (see Figure \ref{figdualflux}):
\begin{list}{-}{\itemsep=0.ex \topsep=0.5ex \leftmargin=1.cm \labelwidth=0.7cm \labelsep=0.3cm \itemindent=0.cm}
\item First case -- The vector $\bfe_i$ is normal to $\edged$, in which case $\edged$ is included in a primal cell $K$; we then denote by $\edge'$ the second face of $K$ which is also normal to $\bfe_i$.
We thus have $\edged=D_\edge | D_{\edge'}$.
Then the mass flux through $\edged$ is given by:
\begin{equation}\label{eq:flux_eK}
F_{\edge,\edged} = \frac 1 2 \ \bigl[ F_{K,\edge}\ \bfn_{K,\edge} 
+ F_{K,\edge'}\ \bfn_{K,\edge'}  \bigr] \cdot \bfn_{D_\edge,\edged}.
\end{equation}
where  $\bn_{D_\edge,\edged} $  stands for the unit normal vector to $\edged$ outward $D_\edge$. 
\item  Second case -- The vector $\bfe_{i}$ is tangent to $\edged$, and $\edged$ is the union of the halves of two
primal faces $\edgeperp$ and $\edgeperp'$ such that $\edgeperp\in \edges(K)$ and $\edgeperp' \in \edges(L)$.
The mass flux through $\edged$ is then given by:
\begin{equation}\label{eq:flux_eorth}
F_{\edge,\edged} = \frac 1 2\ \bigl[F_{K,\edgeperp}+ F_{L,\edgeperp'} \bigr].
\end{equation}
\end{list}
\begin{figure}[htb]\label{figdualflux}
\centering
\begin{tikzpicture}[scale=1]
\draw[-](0,0)--(6,0)--(6,2)--(0,2)--(0,0);
\draw[-] (0,0)--(2,0)--(2,2)--(0,2)--(0,0);
\draw[fill=orange!10] (2,0)--(5,0)--(5,2)--(2,2)--(2,0);
\draw[-] (5,0)--(6,0)--(6,2)--(5,2)--(5,0);
\draw[-](4,0)--(4,2);
\path (0.5,1) node[] { $K$};
\path (5.75,1) node[] { $L$};
\path (3.8,0.75) node[rotate=90] { $\edge=K|L$};
 \path (4.5,0.14) node[] { $D_\edge$};
 \path (3.88,2.15) node[] { $\edged$};
\path (1.8,1) node[rotate=90] { $\edged \subset K$};
\path (1.6,2.2) node[] { $\edgeperp$};
\path (5.4,2.2) node[] { $\edgeperp'$};
 \draw[dashed,-,very thick,red](2,0)--(2,2);
 \draw[dashed,-,very thick,red](2,2)--(5,2);
 \draw[-,very thin,blue](0,2)--(4,2);
 \draw[-,very thin,green](4,2)--(6,2);
\end{tikzpicture}
\caption{Notations for the dual fluxes of the first component of the velocity.}
\end{figure}
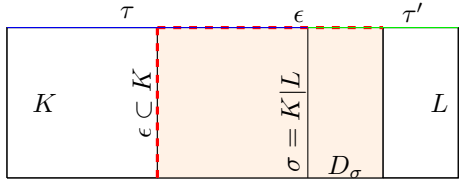
Note that we have the usual finite volume property of local conservativity of the mass flux through a dual face  
$D_\edge|D_{\edge'}$ (\ie \ $F_{\edge,\edged}=-F_{\edge',\edged}$), and that the flux through a dual face included in the boundary still vanishes.

The density on a dual cell is given by:
\begin{equation}
\begin{aligned} &
\mbox{for } \edge \in \edgesint,\ \edge=K|L\quad 
&&
|D_\edge|\ \vr_\Ds= |D_{ K,\edge}|\ \vr_K + |D_{ L,\edge}|\ \vr_L,
\\ &
\mbox{for } \edge \in \edgesext,\ \edge \in \edges(K),\quad
&&
\vr_\Ds= \vr_K.
\end{aligned}
\label{eq:rho_edge}\end{equation}
These definitions of the dual mass fluxes and the dual densities ensure that a finite volume discretization of the mass balance equation over the diamond cells holds:
\begin{equation}\label{eq:mass_D_imp}
\mbox{for } 1 \le i \le d, \ \forall \edge\in\edges_{\intt}^{(i)}, \qquad
\frac{1}{|D_\edge|} \sum_{\edged\in\edgesd(D_\edge)} \fluxd + \cstab h_\mesh^\alpha( \vr_{\Ds} - \vr^\star) 
 =0.
\end{equation}
This condition is essential to derive a discrete kinetic energy balance  in Proposition \ref{estimup} below.

Since the flux across a dual face lying on the boundary is zero, the values $u_\edged$ are only needed at the internal dual faces; they are chosen  centered   \ie,  \begin{equation*} \text{for} \  \edged=D_\edge|D_{\edge'} \in \widetilde{\E}_{\intt}^{(i)}, \quad u_{\edged} =\frac{u_{\edge} +u_{\edge'}}{2}.
\end{equation*}
\noindent {\bf Discrete divergence and gradient -}
 The discrete divergence operator $\dive_{\mesh}$ is defined by:
\begin{align} \label{eq:div}
  &\begin{array}{l| l} \displaystyle
    \dive_\mesh: \quad & \quad \Hmesh\longrightarrow L_{\mesh}
		\\ [1ex] & \displaystyle \quad \bfu \longmapsto \dive_\mesh \bfu  =  \sum_{K\in\mesh} \frac 1 {|K|} \sum_{\edge\in\edges(K)} \! \medge \uKedge \ \characteristic_K,
  \end{array}
\end{align}
where $u_{K,\edge} $ is defined in ($\ref{eq:edge_velo}$). 
Once again, we have the usual finite volume property of local conservativity of the flux through an interface $\edge=K|L$ between the cells  $K,L\in\mesh$, \ie $  u_{K,\edge} =-u_{L,\edge} ,\quad\forall \edge=K|L\in\edgesint.$
The discrete divergence of $\bfu = (u_1, \ldots, u_d) \in \Hmeshzero$   may also be written as
\begin{equation}
    \dive_\mesh \bfu  = \sum_{i=1}^d \sth (\eth_i u_i)_K \characteristic_K,
\end{equation}
where the discrete derivative $(\eth_i u_i)_K$ of $u_i$ on $K$ is defined by 
\begin{equation}
   (\eth_i u_i)_K = \frac{\vert \edge \vert} {\vert K \vert}(u_{\edge'} - u_{\edge})  \mbox{ with } K = [\overrightarrow{\edge \edge'}], \edge, \edge' \in \edgesi.   \label{discrete-derivative-i-ui}
\end{equation}
The pressure gradient in the discrete momentum balance  is defined as follows:
\begin{equation}\label{eq:grad}
\begin{array}{l|l}
\gradedges:\quad
& \quad
L_{\mesh} \longrightarrow \Hmeshzero 
\\[1ex] & \displaystyle \quad
p \longmapsto \gradedges p = (\eth_{1} p, \ldots,  \eth_{d} p)^t,
\end{array}
\end{equation}
where $\eth_i p \in  \Hmeshizero$ is the discrete derivative of $p$ in the $i^{th}$ direction, defined by: 
\begin{equation}
 \label{discderive}
 \eth_i p(\bfx) =  \frac{|\edge|}{|D_\edge|}\ (p_L - p_K)\, \quad \forall \bfx\in D_\edge, \ 
 \mbox{for } \edge=\overrightarrow{K|L} \in \edgesinti, \ i= 1,\ldots, d.
\end{equation}
Note that, in fact, the discrete gradient of a function of $L_\mesh$ should only be defined on the internal faces, and does not need to be defined on the external faces;
we set it here in  $\Hmeshzero$  (that is zero on the external faces) in order to be coherent with \eqref{dmom}.
This gradient is built as the dual operator of the discrete divergence, which means:

\begin{lm}[Discrete $\dive-\nablai$ duality]
\label{lem:duality}
Let $\Omega$ be a MAC-compatible bounded domain of $\R^d$, $d=2$ or $d=3$.
Let $\ q\in L_{\mesh}$ and $\bfv\in\Hmeshzero$.
Then we have:
\begin{equation}
\int_{\Omega} q  \ \dive_{\mesh}\bfv \dx +\int_{\Omega} \nabla_{\edges} q\cdot \bfv \dx  =0 \label{Ndiscret}.
\end{equation}
\end{lm}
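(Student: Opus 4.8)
The plan is to establish \eqref{Ndiscret} by a discrete summation-by-parts argument: I would evaluate the two integrals separately and recognize that they are opposite face-indexed sums. Throughout, the starting observation is that $q = \sum_{K \in \mesh} q_K \characteristic_K$ and $\bfv = (v_1, \ldots, v_d)$, with $v_i = \sum_{\edge \in \edgesinti} v_\edge \characteristic_{D_\edge}$, are piecewise constant, so that $\dive_\mesh \bfv$ and $\nabla_\edges q$ are themselves piecewise constant and both integrals reduce to finite sums. As for the mass flux, the face value $v_{K,\edge} = v_\edge \, \bfe_i \cdot \bfn_{K,\edge}$ satisfies local conservativity $v_{K,\edge} = -v_{L,\edge}$ across an interior face $\edge = K|L$, and $v_{K,\edge} = 0$ on every external face because $\bfv \in \Hmeshzero$.

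First I would treat $\int_\Omega q \, \dive_\mesh \bfv \dx$. Since $q \equiv q_K$ on $K$ and $\dive_\mesh \bfv \equiv \frac{1}{|K|} \sum_{\edge \in \edges(K)} |\edge| \, v_{K,\edge}$ on $K$, integrating over $K$ cancels the factor $|K|$ and gives $\int_\Omega q \, \dive_\mesh \bfv \dx = \sum_{K \in \mesh} q_K \sum_{\edge \in \edges(K)} |\edge| \, v_{K,\edge}$. The crux of this first part is to reorder this cell-indexed double sum into a face-indexed sum: the external faces drop out because $v_{K,\edge} = 0$ there, while each interior face $\edge = K|L$ is counted once from $K$ and once from $L$, contributing $|\edge| (q_K v_{K,\edge} + q_L v_{L,\edge})$. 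Using $v_{L,\edge} = -v_{K,\edge}$ together with the orientation convention $\edge = \overrightarrow{K|L}$, for which $\bfe_i \cdot \bfn_{K,\edge} = 1$ and hence $v_{K,\edge} = v_\edge$, this collapses to $-\sum_{\edge \in \edgesint,\, \edge = \overrightarrow{K|L}} |\edge| (q_L - q_K) v_\edge$.

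Next I would compute $\int_\Omega \nabla_\edges q \cdot \bfv \dx = \sum_{i=1}^d \int_\Omega \eth_i q \; v_i \dx$. Here both $\eth_i q$ and $v_i$ are constant on each dual cell $D_\edge$, $\edge \in \edgesinti$, so integrating over $D_\edge$ produces a factor $|D_\edge|$ that exactly cancels the $1/|D_\edge|$ in the definition \eqref{discderive} of the discrete gradient. Each interior face therefore contributes $|\edge| (q_L - q_K) v_\edge$, and summing over $i$ and over $\edgesint$ yields precisely $+\sum_{\edge \in \edgesint,\, \edge = \overrightarrow{K|L}} |\edge| (q_L - q_K) v_\edge$. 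Adding the two expressions, the opposite face-sums cancel and \eqref{Ndiscret} follows.

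The computation is entirely elementary and carries no analytic difficulty; the one point I would single out as the real crux is the bookkeeping that makes the two sums coincide up to sign. One must verify that the geometric weight $|\edge|/|K|$ built into $\dive_\mesh$ and the weight $|\edge|/|D_\edge|$ built into $\nabla_\edges$ are exactly the ones that survive after integration, and that the orientation convention $\edge = \overrightarrow{K|L}$ is applied consistently, so that both terms are expressed through the same difference $q_L - q_K$ and the same face value $v_\edge$. Once the signs are tracked and the vanishing of the boundary contributions (guaranteed by $\bfv \in \Hmeshzero$) is used, the identity is immediate.
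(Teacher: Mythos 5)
Your proof is correct and is exactly the standard summation-by-parts argument that the lemma relies on; note that the paper itself states Lemma \ref{lem:duality} without proof, so there is no alternative argument to compare against. Your bookkeeping is accurate: the boundary terms vanish because $\bfv\in\Hmeshzero$, the conservativity $v_{K,\edge}=-v_{L,\edge}$ together with the orientation $\edge=\overrightarrow{K|L}$ turns the cell-indexed sum into $-\sum_{\edge}|\edge|(q_L-q_K)v_\edge$, and the $|D_\edge|$ factors cancel in the gradient term to give the opposite sum.
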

\noindent {\bf Discrete Laplace operator} - For $i=1 \ldots, d$, we classically define the discrete Laplace operator on the $i^{th}$ velocity grid by:
\begin{align}
  &
\begin{array}{l|l}
-\Delta_{\edges}^{(i)} : \quad
& \quad
\Hmeshizero  \longrightarrow  \Hmeshizero
\\ & \displaystyle \quad
\ui \longmapsto - \Delta_{\edges}^{(i)}\ui  
\end{array} \nonumber \\
& 
- \Delta_{\edges}^{(i)}\ui (\bfx)=\frac{ 1}{ \vert D_{\edge} \vert}\sum_{\edged\in\tilde{\edges}(D_{\edge})} 
\phi_{\edge,\edged},\qquad \forall \bfx\in D_\edge,\ 
\mbox{ for } \edge \in \edgesi_{\intt}, \label{eq:lapi}
\end{align}
where
\begin{align}
&\displaystyle
\phi_{\edge,\edged}=
\begin{cases}
\ \dfrac{\vert\edged\vert}{d_{\edged}} (u_{\edge}-u_{\edge'}) &\mbox{ if } \edged= \edge\! \vert\edge' 
 \in \edgesdinti,\\[1ex]
\ \dfrac{\vert\edged\vert}{d_{\edged}} u_{\edge} &\mbox{ if } \edged \in\edgesdexti\cap\tilde{\edges}(D_{\edge})
\end{cases}
\end{align}
with $d_{\edged}$ given by \eqref{depsilon}.
The fluxes $\phi_{\edge,\edged}$ satisfy the local conservativity property: 
\begin{equation}
\label{conservdiff}
\phi_{\edge,\edged}=-\phi_{\edge',\edged},\quad\forall \edged=\edge\!\vert\edge' \in\edgesdinti.
\end{equation}
Then the discrete Laplace operator of the full velocity vector is defined by  
\begin{equation}
 \begin{array}{ll}
  -\Delta_\edges:  & \Hmeshzero \longrightarrow \Hmeshzero \\
  & \bfu \mapsto -\Delta_\edges\bfu = (-\Delta_{\edges}^{(1)} u_1,\ldots, -\Delta_{\edges}^{(d)} u_d)^t.
\end{array}
\end{equation}
Let us now recall the definition of the discrete $H^1_0$ inner product \cite{eymard2000finite}; it is obtained by taking the inner product of the discrete Laplace operator and a test function $\bfv\in\Hmeshzero$ and integrating over the computational domain.
A simple reordering of the sums (which may be seen as a discrete integration by parts) yields, thanks to the conservativity of the diffusion flux \eqref{conservdiff}:
\begin{equation}
  \label{ps}
  \begin{array}{l}
    \displaystyle \forall (\bfu, \bfv) \in \Hmeshzero^2, \qquad \int_\Omega -\Delta_\edges \bfu \cdot \bfv \dx= [\bfu,\bfv]_{1,\edges,0}=\sum_{i=1}^d [\ui,\vi]_{1,\edgesi,0}, \\
    [2ex]\mbox{with }[\ui,\vi]_{1,\edgesi,0} = \displaystyle\sum_{\substack{\edged \in \edgesdinti\\ \edged=\overrightarrow{\edge\!\vert\edge'}}} \frac{|\edged|}{d_\edged}\ (u_{\edge}-u_{\edge'})\ (v_{\edge}-v_{\edge'})+ \sum_{\substack{\edged \in \edgesdexti\\ \edged \in \edgesd(D_{\edge})}} \frac{|\edged|}{d_\edged}\ u_{\edge}\ v_{\edge}.
  \end{array}
\end{equation}
The bilinear forms $\left|\begin{array}{l}
                     \Hmeshizero \times \Hmeshizero \to \xR\\
                      [1ex] (u,v) \mapsto  [\ui,\vi]_{1,\edgesi,0}
                    \end{array}\right.
                   $
                   and
                   $\left|\begin{array}{l}
                     \Hmeshzero \times \Hmeshzero \to \xR\\
                      [1ex] (\bfu,\bfv) \mapsto  [\bfu,\bfv]_{1,\edges,0}
                    \end{array}\right.
                    $
are inner products on $\Hmeshizero$, for $i = 1, \ldots, d$, and on $\Hmeshzero$ respectively, which induce the following discrete $H^1_0$ norms:
\begin{subequations}
  \begin{align}
  &\label{normi}
  \|\ui\|^2_{1,\edgesi,0} = [\ui,\ui]_{1,\edgesi,0}= \sum_{\substack{\edged \in \edgesdinti\\ \edged=\overrightarrow{\edge\!\vert\edge'}}} \frac{|\edged|}{d_\edged}\ (u_{\edge}-u_{\edge'})^{2}+ \sum_{\substack{\edged \in \edgesdexti\\ \edged \in \edgesd(D_{\edge})}} \frac{|\edged|}{d_\edged}\ u_{\edge}^{2}\\
  & \label{normfull}
  \|\bfu\|^2_{1,\edges,0} = [\bfu,\bfu]_{1,\edges,0} = \sum_{i=1}^d \|\ui\|^2_{1,\edgesi,0}.
\end{align}
\label{norm}
\end{subequations}
\begin{figure}[tb]
\centering
\begin{tikzpicture}[scale=1]
\draw[-](0,0)--(6,0)--(6,2)--(0,2)--(0,0);
\draw[-] (0,0)--(2,0)--(2,2)--(0,2)--(0,0);
\draw[fill=orange!10] (2,0)--(5,0)--(5,2)--(2,2)--(2,0);
\draw[-] (5,0)--(6,0)--(6,2)--(5,2)--(5,0);
\draw[-] (0,2)--(4,2)--(4,3.5)--(0,3.5)--(0,2);
\draw[-] (4,2)--(6,2)--(6,3.5)--(4,3.5)--(4,2);
\draw[-,very thick,color=blue=90!] (2,1)--(5,1)--(5,2.75)--(2,2.75)--(2,1);
\draw[-](4,0)--(4,2);
\path (0.5,1) node[] { $K$};
\path (5.75,1) node[] { $L$};
\path (3.8,0.75) node[rotate=90] { $\edge=K|L$};
 \path (4.5,0.14) node[] { $D_\edge$};
 \path (2.5,2.54) node[] { $D_\edged$};
 \path (3.8,2.95) node[] { $\edge'$};
 \path (3.88,2.15) node[] { $\edged=\edge|\edge'$};
  \path (0.5,3.25) node[] { $M$};
 \path (5.75,3.25) node[] { $N$};
 \draw[-](2,2)--(2,3.5);
 \draw[-](5,2)--(5,3.5);
 \draw[-,very thick,red](2,2)--(5,2);
\end{tikzpicture}

\caption{Full grid for the definition of the derivative of the velocity.}
 \label{fig:gradient}
\end{figure}
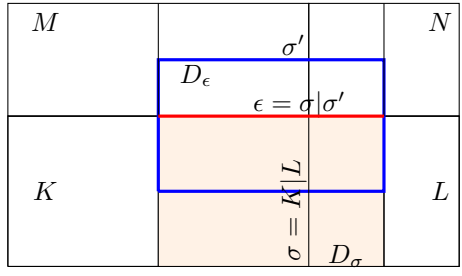
Since we are working on Cartesian grids, this inner product may be formulated as the $L^2$ inner product of discrete gradients. 
Indeed, we define the following discrete gradient of each velocity component $u_i$
\begin{equation}\label{partialdiscrete}
  \nabla_{\widetilde\edges^{(i)}} u_i = (\eth_1 u_i, \ldots, \eth_d u_i)  \mbox{ with }
  \eth_j u_i = \sum_{\substack{\edged \in \edgesdinti \\ \edged \perp \bfe_j}}  (\eth_j u_i)_{ D_\edged} \ \characteristic_{D_\edged}
  +\sum_{\edged \in \edgesdexti} (\eth_j u_i)_{ D_\edged} \ \characteristic_{D_\edged},
\end{equation}
where  $(\eth_j u_i)_{D_\edged} = \dfrac{u_{\edge'} - u_{\edge}}{d_\edged}$ with $\edged = \overrightarrow{\edge\vert \edge'}$, and  $D_\edged = \edged \times \bfx_\edge \bfx_{\edge'}$ (see Figure \ref{fig:gradient}, note also that $u_\edge=0$ if $\edge \in \edgesexti$).
This definition is compatible with the definition of the discrete derivative  $(\eth_i u_i)_K$ given by \eqref{discrete-derivative-i-ui},  since, if $\edged \subset K$, then $D_\edged = K$.
If   $\edged \in\edgesdexti\cap\tilde{\edges}(D_{\edge})$, we set
$(\eth_j u_i)_{D_\edged} = \dfrac{- u_{\edge}}{d_\edged}\bfn_{D_\edge,\edged}\cdot e_j$ with  $D_\edged = \edged \times \bfx_\edge \bfx_{\edge,b}$, where $\bfx_{\edge,b}=\edge \cap \partial \Omega$.
With this definition, it is easily seen that 
\begin{equation}\label{gradient-and-innerproduct}
    \int_\Omega \nabla_{\widetilde\edges^{(i)}} u \cdot \nabla_{\widetilde\edges^{(i)}} v  \dx = [u,v]_{1,\edgesi,0},
    \quad \forall u, v \in \Hmeshizero, \mbox{ for } i= 1, \ldots, d.
\end{equation}
where $[u,v]_{1,\edgesi,0}$ is the discrete $H^1_0$ inner product defined by \eqref{ps}.
We may then define 
$\nabla_{\widetilde\edges} \bfu= (\nabla_{\widetilde\edges^{(1)}} u_1, \ldots, \nabla_{\widetilde\edges^{(d)}} u_d),$
so that 
$
\displaystyle   \int_\Omega \nabla_{\widetilde\edges} \bfu : \nabla_{\widetilde\edges} \bfv  \dx = [\bfu,\bfv]_{1,\edges,0}.
$
An equivalent formulation of the discrete momentum balance \refe{dmom} reads:
\begin{multline}\label{eq:weakmom}
\int_\Omega \dv_{\widetilde\E}(\vr \bfu \otimes \bfu) \cdot \bfv \dx +
 \mu\int_\Omega \nabla_{\widetilde\edges} \bfu : \nabla_{\widetilde\edges} \bfv  \dx +(\mu+\lambda)\int_\Omega   \dive_{\mesh}\bfu \dive_{\mesh}\bfv \dx \\  -\int_\Omega p\, \dive_{\mesh}\bfv\dx = 
 \int_\Omega \mathcal P_\edges  \bff \cdot \bfv \dx, \ \forall \bfv \in \Hmeshzero.
\end{multline}
%
%
\section{Some analysis results for discrete functions}\label{disrot}

In the theory developed in this paper, we will need discrete Sobolev inequalites for the discrete approximations.
The following result is proved in \cite[Lemma 9.5 ]{eymard2000finite}.

\begin{thm}[Discrete Sobolev inequalities]\label{sobolev}
Let $\Omega$ be a MAC compatible bounded domain of $\R^d$, $d=2$ or $d=3$. Let $q< +\infty $ if $d=2$ and $ q=6$ if $d=3$. Then there exists $C = C(q,\Omega,\eta_\mesh)$, non increasing with respect to $\eta_\mesh$, such that, for all $ \bfu \in \Hmeshzero$,
\begin{equation*}
\| \bfu \|_{L^q(\Omega)} \le C  \| \bfu \|_{1,\E,0}. 
\end{equation*}
\end{thm}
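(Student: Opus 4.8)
The plan is to reduce the vector inequality to a scalar discrete Sobolev inequality for a single velocity component, and then to reproduce at the discrete level the classical Gagliardo--Nirenberg--Sobolev argument on the Cartesian grid carrying that component.

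\textbf{Reduction to one component.} Since $|\bfu| \le \sum_{i=1}^d |u_i|$ pointwise, one has $\| \bfu \|_{L^q(\Omega)} \le \sum_{i=1}^d \| u_i \|_{L^q(\Omega)}$, while \eqref{normfull} gives $\| u_i \|_{1,\edgesi,0} \le \| \bfu \|_{1,\edges,0}$ for each $i$. It therefore suffices to prove, for each scalar component $u_i \in \Hmeshizero$, the bound $\| u_i \|_{L^q(\Omega)} \le C\, \| u_i \|_{1,\edgesi,0}$ with $C = C(q,\Omega,\eta_\mesh)$. Because $u_i$ vanishes on every boundary dual cell, I extend it by zero outside $\Omega$; since $\{ D_\edge : \edge \in \edgesi \}$ is itself a (non-uniform) Cartesian partition of $\Omega$, I may work with a single piecewise constant function on an axis-parallel grid, whose only jumps sit on the dual faces $\edged$.

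\textbf{Discrete $W^{1,1} \hookrightarrow L^{d/(d-1)}$.} The heart of the matter is the discrete Loomis--Whitney inequality. Fix a coordinate direction $\bfe_j$. Along the grid line parallel to $\bfe_j$ passing through a given dual cell $D_\edge$, the function $u_i$ is piecewise constant and vanishes outside $\Omega$, so a telescoping estimate yields the pointwise bound $|u_\edge| \le T_j(\edge)$, where $T_j(\edge)$ is the total variation of $u_i$ along that line, i.e.\ the sum of the jumps $|\llbracket u_i \rrbracket_\edged|$ over the dual faces $\edged \perp \bfe_j$ crossed by the line. Taking the product over $j = 1, \dots, d$, raising to the power $1/(d-1)$, summing over the cells and applying the generalized H\"older inequality to the successive summations, one obtains
\[
\sum_{\edge \in \edgesi} |D_\edge|\, |u_\edge|^{\frac{d}{d-1}} \le C \prod_{j=1}^d \Bigl( \sum_{\edged \perp \bfe_j} |\edged|\, |\llbracket u_i \rrbracket_\edged| \Bigr)^{\frac{1}{d-1}},
\]
which, by the arithmetic--geometric mean inequality, controls $\| u_i \|_{L^{d/(d-1)}(\Omega)}$ by $C$ times the sum over $j$ of the one-directional discrete $W^{1,1}$ seminorms.

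\textbf{Bootstrap and conversion to the discrete $H^1_0$ seminorm.} To reach the target exponent I apply the previous estimate to $w = |u_i|^s$, which is again piecewise constant, using the discrete chain-rule bound $|\llbracket |u_i|^s \rrbracket_\edged| \le s\, \max(|u_\edge|,|u_{\edge'}|)^{s-1}\, |\llbracket u_i \rrbracket_\edged|$ together with a H\"older inequality separating the factor $|u_i|^{s-1}$ from the jumps. Choosing $s$ so that $sd/(d-1)$ equals $2^\star = 2d/(d-2)$ when $d = 3$, or any prescribed finite exponent when $d = 2$, raises the integrability to the announced level. It remains to replace each one-directional $W^{1,1}$ seminorm by the discrete $H^1_0$ seminorm through Cauchy--Schwarz:
\[
\sum_{\edged \perp \bfe_j} |\edged|\, |\llbracket u_i \rrbracket_\edged| \le \Bigl( \sum_{\edged \perp \bfe_j} \frac{|\edged|}{d_{\edged}}\, |\llbracket u_i \rrbracket_\edged|^2 \Bigr)^{1/2} \Bigl( \sum_{\edged \perp \bfe_j} |\edged|\, d_{\edged} \Bigr)^{1/2},
\]
where the first factor is bounded by $\| u_i \|_{1,\edgesi,0}$ and the second by $C\,|\Omega|^{1/2}$, since $|\edged|\, d_{\edged}$ is comparable to the volume of the cell $D_\edged$. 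The mesh regularity \eqref{regmesh}, i.e.\ $d_{\edged} \ge \eta_\mesh h_\mesh$, controls these comparison constants and makes the resulting $C$ non-increasing in $\eta_\mesh$.

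\textbf{Main obstacle.} The principal difficulty is the Loomis--Whitney step: combining the $d$ one-dimensional total-variation bounds into a single global $L^{d/(d-1)}$ estimate via the generalized H\"older inequality while correctly accounting for the geometry (cell volumes versus face areas) on a non-uniform Cartesian grid, and then tracking through the power-trick bootstrap and the final Cauchy--Schwarz that the constant depends on the mesh only through $\eta_\mesh$, and monotonically so.
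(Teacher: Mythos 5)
The paper does not prove this theorem at all: it simply cites \cite[Lemma 9.5]{eymard2000finite}, and your outline is precisely the standard proof of that cited result (telescoping along grid lines, the Loomis--Whitney/generalized H\"older step giving the discrete $W^{1,1}\hookrightarrow L^{d/(d-1)}$ embedding, the power bootstrap $w=|u_i|^s$, and the final Cauchy--Schwarz converting the $W^{1,1}$ seminorm into $\|u_i\|_{1,\edgesi,0}$). Your sketch is correct and takes essentially the same route as the reference the paper relies on, including the correct handling of the boundary dual faces and of the $\eta_\mesh$-dependence of the constant.
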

The following compactness theorem is a consequence  of \cite[Theorem 9.1 and Lemma 9.5]{eymard2000finite} and \cite[Lemma 5.7]{eymard2009discretization}.
\begin{thm}
Let $\Omega$ be a MAC compatible bounded domain of $\R^d$, $d=2$ or $d=3$.
Consider a sequence of MAC grids $(\mesh_n,\E_n)_{n\in \xN}$, with step size $h_{\mesh_n}$ tending to zero as $\nti$.
Let $(\bfu_n)_\nnn$  be a sequence of discrete functions such that each element of the sequence belongs to   $\Hmeshnzero$  and such that the sequence $ (\| \bfu_n \|_{1,\E_n,0})_{n \in \xN}$  is bounded.
Then, up to the extraction of a subsequence, the sequence $(\bfu_n)_\nnn$  converges in $L^2(\Omega)^d$ to a limit $\bfu$  and this limit satisfies $\bfu \in (\xHone_0(\Omega))^d$.
Furthermore, one has $\nabla_{\widetilde\E_n} \bfu_n \tends \nabla \bfu$ weakly in $L^2(\Omega)^{d \times d}$ as $\nti$.
If $\eta_{\mesh_n} \ge \eta >0$, one has also $\bfu_n \to \bfu$ in $L^q(\Omega)$ for all $q < q(d)$.
\label{th:compactness}\end{thm}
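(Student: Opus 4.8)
The plan is to deduce the result from the standard finite volume compactness machinery, assembling the discrete functional inequalities already at our disposal. The key preliminary observation is that, by \eqref{gradient-and-innerproduct} and \eqref{normfull}, one has $\int_\Omega |\nabla_{\widetilde\E_n}\bfu_n|^2\,\dx = \|\bfu_n\|_{1,\E_n,0}^2$, so the hypothesis provides a uniform bound both on the discrete $H^1_0$ norms and on the $L^2$ norms of the discrete gradients. First I would extend every $\bfu_n$ by zero outside $\Omega$ (which is consistent with $\bfu_n \in \Hmeshnzero$) and work in $\R^d$. A discrete Poincaré inequality---the $q=2$ instance, whose constant depends only on $\diam(\Omega)$ and not on the regularity $\eta_{\mesh_n}$---then yields a uniform bound $\|\bfu_n\|_{L^2(\Omega)}\le C$.

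To obtain relative compactness in $L^2(\Omega)^d$ I would invoke the Riesz--Fréchet--Kolmogorov theorem, for which the crucial ingredient is a uniform estimate on the $L^2$ norm of the space translates,
\[
\|\bfu_n(\cdot+\bm{\xi})-\bfu_n\|_{L^2(\R^d)^d}^2 \le C\,|\bm{\xi}|\,(|\bm{\xi}|+h_{\mesh_n})\,\|\bfu_n\|_{1,\E_n,0}^2,
\]
obtained by summing the jumps of $\bfu_n$ crossed by a segment of length $|\bm{\xi}|$ and using Cauchy--Schwarz; this is exactly the estimate underlying \cite[Lemma 9.3]{eymard2000finite}. Since $h_{\mesh_n}\to 0$, the right-hand side is controlled uniformly in $n$ and tends to $0$ with $\bm{\xi}$, so up to a subsequence $\bfu_n\to\bfu$ in $L^2(\Omega)^d$. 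Passing to the limit $n\to\infty$ in the translate estimate gives $\|\bfu(\cdot+\bm{\xi})-\bfu\|_{L^2}\le C|\bm{\xi}|$, which characterises $\bfu\in H^1(\R^d)^d$; as $\bfu$ vanishes outside $\Omega$, we conclude $\bfu\in H^1_0(\Omega)^d$. For the gradient, the uniform $L^2$ bound on $\nabla_{\widetilde\E_n}\bfu_n$ permits extraction of a weak limit $\bm{G}$ in $L^2(\Omega)^{d\times d}$; I would then identify $\bm{G}=\nabla\bfu$ by testing against $\bm{\psi}\in C_c^\infty(\Omega)^{d\times d}$, using a discrete integration by parts (in the spirit of Lemma \ref{lem:duality}) together with the weak consistency of the discrete gradient operator, so that $\int_\Omega \nabla_{\widetilde\E_n}\bfu_n:\bm{\psi}\,\dx\to-\int_\Omega\bfu\cdot\dv\bm{\psi}\,\dx=\int_\Omega\nabla\bfu:\bm{\psi}\,\dx$; this is the content of \cite[Lemma 5.7]{eymard2009discretization}.

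Finally, under the uniform regularity assumption $\eta_{\mesh_n}\ge\eta>0$, the discrete Sobolev inequality of Theorem \ref{sobolev} furnishes a uniform bound on $\bfu_n$ in $L^{q(d)}(\Omega)^d$, with the constant controlled through $\eta$. Combined with the $L^2(\Omega)$ convergence---hence a.e. convergence along a further subsequence---an interpolation (or Vitali equi-integrability) argument between $L^2$ and $L^{q(d)}$ upgrades the convergence to $L^q(\Omega)^d$ for every $q<q(d)$. The main obstacle is the uniform translate estimate: it is the quantitative heart of the argument and the only place where the vanishing of $h_{\mesh_n}$ is used in an essential way, all the rest being bookkeeping on the discrete inequalities already established. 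Observe also that the mesh regularity enters solely through this final $L^q$ refinement, which explains why the $L^2$ convergence and the $H^1_0$ membership of the limit require no control on $\eta_{\mesh_n}$.
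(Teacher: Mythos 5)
Your proposal is correct and follows essentially the same route as the paper, which states this theorem without proof as a consequence of \cite[Theorem 9.1 and Lemma 9.5]{eymard2000finite} and \cite[Lemma 5.7]{eymard2009discretization}: the uniform translate estimate feeding the Riesz--Fr\'echet--Kolmogorov theorem, the identification of the limit in $H^1_0(\Omega)^d$ and of the weak limit of the discrete gradients by discrete integration by parts, and the $L^q$ upgrade via the discrete Sobolev inequality and interpolation are precisely the ingredients of those cited results. Nothing further is needed.
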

We now recall a discrete analogue of the identity $\eqref{graddivrot}$ linking the gradient, divergence and curl operators, which is proved in \cite{eymard2010convergence}.
First of all, we modify the definition of the discrete gradient ($\nabla_{\E} $) of an element of $L_\mesh $ in some dual cells near the boundary, in order to take into account a null boundary condition at the external faces.
It reads:
\begin{equation}\label{eq:gradext}
\begin{array}{l|l}
\gradtg_{\E}:\quad
& \quad
L_{\mesh} \longrightarrow \Hmesh
\\[1ex] & \displaystyle \quad
w \longmapsto \gradtg_{\E} w = (\partdtg_{1} w , \ldots,  \partdtg_{d} w )^t,
\end{array}
\end{equation}
where $\partdtg_i w \in  \Hmeshi$ is the discrete derivative of $w$ in the $i^{th}$ direction, defined, for $i= 1,\ldots, d$, by: 
\begin{align}
 \label{discderiveext}
 &\displaystyle
\partdtg_i w(\bfx) = \begin{cases} \displaystyle
 \eth_i w(\bfx) =\frac{|\edge|}{|D_\edge|}\ (w_L - w_K), \ & \forall \bfx\in D_\edge, \ 
 \mbox{for } \edge=\overrightarrow{K|L} \in \edgesinti, \\[2ex] \displaystyle
 -\frac{|\edge|}{|D_\edge|}\ w_K\bn_{\edge,K} \cdot \bm{e}_i, \ & \forall \bfx\in D_\edge, \ 
 \mbox{for } \edge  \in \E(K) \cap \edgesexti. 
\end{cases}
\end{align}
In order to define the discrete $\curl$ operator of a function $ \bfv =(v_1,...,v_d) \in \Hmesh $, we use the functions
$ (\eth_j u_i)_{1 \le i,j \le d} $ defined in ($\ref{partialdiscrete}$). This definition is the same for $\bv \in \Hmeshzero$ and $\bv \in \Hmesh$,
the only difference is that we may have $u_\edge \ne 0$ if $\edge \in \edgesexti$ and $\bv \in \Hmesh$.
Then, the discrete $\curl$ operator of a function $ \bfv =(v_1,...,v_d) \in \Hmesh $ is defined by
\begin{align}\label{discurl}
&\displaystyle
\curld \bfv =
\begin{cases}
 \eth_1 v_2 -\eth_2 v_1 &\mbox{ if } d=2,
 \\[1ex]
 \Big(\eth_2 v_3 - \eth_3 v_2, \eth_3 v_1 - \eth_1 v_3,\eth_1 v_2- \eth_2 v_1 \Big)  &\mbox{ if } d=3,
\end{cases}
\end{align}
The following algebraic identity is   a discrete version of ($\ref{graddivrot}$), which is exact in the case of the MAC scheme, contrary to the case of the non conforming P1 finite element scheme, see \cite{eymard2010convergent}.
\begin{lm} Let $\Omega$ be a MAC compatbile bounded domain of $\R^d$, $d=2$ or $d=3$ and let $\mesh$ be a MAC grid and $(\bfv,\bfw) \in(\Hmeshzero)^2$.
Then the following discrete identity holds:
\begin{equation}
\int_\Omega \nabla_{\widetilde\E} \bfv : \nabla_{\widetilde\E} \bfw \dx =
\int_\Omega \dived \, \bfv \ \dived \bfw \dx  + \int_\Omega \curld  \bfv \,\cdot \curld \bfw \dx.
\label{gdcd}
\end{equation}
\label{ggddccd}
\end{lm}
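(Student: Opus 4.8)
The plan is to prove \eqref{gdcd} by expanding both sides into sums of integrals of products of the scalar discrete derivatives $\eth_j v_i,\ \eth_l w_m$ from \eqref{partialdiscrete}, and then to match terms. Writing $\nabla_{\widetilde\E}\bfv:\nabla_{\widetilde\E}\bfw=\sum_{i,j}\eth_j v_i\,\eth_j w_i$, using $\dived\bfv=\sum_i\eth_i v_i$ from \eqref{eq:div}, and expanding $\curld\bfv\cdot\curld\bfw$ from \eqref{discurl}, one checks by direct identification that the ``diagonal'' gradient terms $\eth_j v_i\,\eth_j w_i$ occurring in $\dived\bfv\,\dived\bfw+\curld\bfv\cdot\curld\bfw$ reproduce exactly $\nabla_{\widetilde\E}\bfv:\nabla_{\widetilde\E}\bfw$ (the $i=j$ ones coming from the divergence product, the $i\ne j$ ones from the curl product). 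Consequently \eqref{gdcd} is equivalent to the vanishing of the remaining antisymmetric cross terms, which in two dimensions reads
\[
\int_\Omega \bigl(\eth_1 v_1\,\eth_2 w_2 + \eth_2 v_2\,\eth_1 w_1 - \eth_1 v_2\,\eth_2 w_1 - \eth_2 v_1\,\eth_1 w_2\bigr)\dx = 0,
\]
and in three dimensions is the analogous sum running over all pairs $i\ne j$.

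The core of the argument is therefore the following discrete commutation identity: for every $i\ne j$ and every $\bfv,\bfw\in\Hmeshzero$,
\[
\int_\Omega \eth_j v_i\,\eth_i w_j\dx = \int_\Omega \eth_i v_i\,\eth_j w_j\dx.
\]
Granting this, the cross-term integral above cancels in pairs: in two dimensions $\int_\Omega\eth_2 v_1\,\eth_1 w_2=\int_\Omega\eth_1 v_1\,\eth_2 w_2$ and $\int_\Omega\eth_1 v_2\,\eth_2 w_1=\int_\Omega\eth_2 v_2\,\eth_1 w_1$, which finishes the proof. This identity is the exact discrete counterpart of the continuous relation $\int_\Omega\partial_j v_i\,\partial_i w_j=\int_\Omega\partial_i v_i\,\partial_j w_j$ underlying \eqref{graddivrot}, itself obtained by integrating by parts twice and commuting the mixed second partials; the staggered MAC arrangement on a Cartesian grid is precisely what makes the discrete version an exact equality rather than a merely consistent approximation.

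To establish the commutation identity I would reduce each side to a sum over the grid. Since $\eth_i v_i$ and $\eth_j w_j$ are piecewise constant on the primal cells $K$, the right-hand side equals $\sum_{K}\tfrac{|\edge^{(i)}|\,|\edge^{(j)}|}{|K|}(v_{i,\edge'}-v_{i,\edge})(w_{j,\tau'}-w_{j,\tau})$, where $\edge,\edge'$ (resp. $\tau,\tau'$) are the two faces of $K$ orthogonal to $\bfe_i$ (resp. $\bfe_j$); the left-hand side is a similar sum over the corner cells $D_\edged$ carrying the off-diagonal derivatives. On an orthogonal parallelepipedic mesh the geometric weights $|\edge^{(i)}|\,|\edge^{(j)}|/|K|$ are products of mesh steps transverse to the $(i,j)$-plane and align across the grid lines in directions $i$ and $j$, which is exactly what lets a discrete integration by parts (summation by parts) in the $i$-direction followed by one in the $j$-direction transfer the difference operators onto a single factor. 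Both orderings then produce the same mixed second difference of the grid data, since such differences commute on a Cartesian lattice, and the two integrals coincide.

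The step I expect to be the main obstacle is precisely this summation-by-parts bookkeeping: one must index the faces and corner cells carefully, track which primal and dual cells are adjacent across each grid line, and verify that every boundary contribution drops out. The latter is guaranteed by $\bfv,\bfw\in\Hmeshzero$, \ie $v_{i,\edge}=w_{i,\edge}=0$ for $\edge\in\edgesexti$, so no uncancelled edge terms survive along $\partial\Omega$. In three dimensions the scheme is identical: the curl has three components, the surviving cross terms again group into pairs of the form $\int_\Omega\eth_j v_i\,\eth_i w_j\dx$, and the same commutation identity, proved one coordinate plane at a time, closes the argument.
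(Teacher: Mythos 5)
Your proposal is correct, and it follows what is essentially the standard argument for this identity on MAC grids; note that the paper itself does not prove Lemma \ref{ggddccd} but quotes it from the earlier MAC analysis in \cite{eymard2010convergence}, so your write-up is a self-contained verification of a result the authors import. The decomposition is right: the diagonal products $\eth_j v_i\,\eth_j w_i$ from $\dived\bfv\,\dived\bfw+\curld\bfv\cdot\curld\bfw$ reassemble $\nabla_{\widetilde\E}\bfv:\nabla_{\widetilde\E}\bfw$ exactly (each off-diagonal pair $(i,j)$ occurs in exactly one curl component, in $d=3$ as well), and everything reduces to the commutation identity $\int_\Omega \eth_j v_i\,\eth_i w_j\dx=\int_\Omega \eth_i v_i\,\eth_j w_j\dx$ for $i\ne j$. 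That identity does hold as you describe: with the definitions \eqref{discrete-derivative-i-ui} and \eqref{partialdiscrete} the geometric weights $|K|$, $|D_\edged|$ cancel against the denominators on both sides, leaving unweighted sums of products of plain differences of the face values (times a common transverse mesh step in $3$D), after which two Abel summations produce the same mixed second difference on either side, and the homogeneous Dirichlet condition built into $\Hmeshzero$ (equivalently, the zero extension implicit in the external-face formula for $\eth_j u_i$) kills the boundary terms.
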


We finish this section by introducing a discrete construction of the test function used in Step 3 of the proof of Theorem \ref{continuityws} to obtain the convergence of the so-called effective viscous flux.
We recall that this test function is the product of a scalar regular function with a velocity field whose divergence is the density; we need here to show the existence, at the discrete level, of such a velocity field, and then some regularity estimates for the resulting test function.
To this goal, we first introduce the discrete Laplace operator on the primal mesh.
For $\edge \in \edgesint$, $\edge=K|L$, let $d_\edge$ be defined as the distance between the mass center of $K$ and $L$, \ie \ $d_\edge=d(\bfx_K,\bfx_L)$; for an external face $\edge \in \edgesext$ adjacent to the primal cell $K$, let $d_\edge=d(\bfx_K,\edge)$.
Then, with this notation, we obtain a discretization of the Laplace operator wih homogeneous Dirichet boundary conditions on the primal mesh by:
\begin{align}
  &
\begin{array}{l|l}
-\Delta_{\mesh} : \quad
& \quad
L_\mesh  \longrightarrow  L_\mesh
\\ & \displaystyle \quad
w \longmapsto - \Delta_{\mesh} w  
\end{array} \nonumber \\
& 
- \Delta_{\mesh} w (\bfx)=\frac{ 1}{ \vert K \vert}\sum_{\edge \in {\edges}(K)} 
\phi_{K,\edge},\qquad \forall \bfx\in K,\ 
\mbox{ for } K \in \mesh, \label{eq:lap2}
\end{align}
where
\begin{align}
&\displaystyle
\phi_{K,\edge}=
\begin{cases}
\ \dfrac{\vert \edge\vert}{d_\edge} (w_{K}-w_{L}) &\mbox{ if } \edge= K \! \vert L
 \in \edgesint,\\[2ex]
\ \dfrac{\vert\edge\vert}{d_\edge} w_{K} &\mbox{ if } \edge \in\edgesext\cap {\edges}(K).
\end{cases}
\end{align}
The following lemma \cite{eymard2010convergence} clarifies the relations between this Laplace operator and the already defined gradient  divergence and curl operators.

\begin{lm} Let $\Omega$ be a MAC compatible bounded domain of $\R^d$, $d=2$ or $d=3$.
Let $w \in L_{\mesh}$.
Let $\bfv = - \gradtg_{\E} w \in \Hmesh$ be defined by~\eqref{eq:gradext}.
Then, with the discrete  $\curl$ operator  defined by \eqref{discurl}, we have $\curld{\bfv}=0$.
Furthermore, for any $\vr \in L_{\mesh}$, there exists one and only one $w$ in $L_{\mesh}$ such that $- \Delta_\mesh w = \vr$, and, in this case, $\dived{\bfv}=\vr.$
\label{divcurl}\end{lm}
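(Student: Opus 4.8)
The statement collects three separate assertions, which I would establish in turn. For the first, that $\curld \bfv = 0$ when $\bfv = -\gradtg_\E w$: since the discrete curl \eqref{discurl} is assembled from the quantities $\eth_j v_i$ of \eqref{partialdiscrete} and the components of $\bfv$ are $v_i = -\partdtg_i w$, every entry of $\curld \bfv$ has the form $\eth_i \partdtg_j w - \eth_j \partdtg_i w$ (the single entry $\eth_1 v_2 - \eth_2 v_1$ if $d=2$, and the three cyclic analogues if $d=3$). The plan is therefore to show that the discrete mixed second differences commute, $\eth_i \partdtg_j w = \eth_j \partdtg_i w$ for $i \ne j$, which is the discrete counterpart of $\curl(\nabla w)=0$. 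On the tensor-product Cartesian grid this is elementary: evaluating either side on a dual cell $D_\edged$ shared by adjacent primal cells reduces it to the same second-order difference of the four (in $d=2$) or eight (in $d=3$) surrounding values $w_K$, with identical geometric weights. The only point requiring care is the treatment near $\partial\Omega$, where the modified boundary formula in \eqref{discderiveext} must be used; the commutation survives because the homogeneous Dirichlet values enter symmetrically regardless of the order in which the two differences are taken.

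For the second assertion, I would note that $L_\mesh$ is finite-dimensional and $-\Delta_\mesh$ from \eqref{eq:lap2} is linear, so existence and uniqueness follow once $-\Delta_\mesh$ is shown to be injective. Testing $-\Delta_\mesh w$ against $w$ and reordering the sums (a discrete integration by parts, as in the derivation of \eqref{ps}) yields the symmetric form
\[
\int_\Omega (-\Delta_\mesh w)\, w \dx = \sum_{\edge = K|L \in \edgesint} \frac{|\edge|}{d_\edge}\,(w_K - w_L)^2 + \sum_{\edge \in \edgesext} \frac{|\edge|}{d_\edge}\, w_K^2,
\]
where in the boundary sum $K$ is the cell adjacent to $\edge$. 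If this vanishes, then $w_K = w_L$ across every interior face and $w_K = 0$ on every boundary-adjacent cell; the connectedness of $\Omega$ then forces $w \equiv 0$. Hence $-\Delta_\mesh$ is symmetric positive definite, in particular bijective, which gives the unique $w$ solving $-\Delta_\mesh w = \vr$.

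For the third assertion I would prove the operator identity $\dived \circ(-\gradtg_\E) = -\Delta_\mesh$ on $L_\mesh$, whence $\dived \bfv = -\Delta_\mesh w = \vr$. By \eqref{eq:div}, $\dived \bfv$ equals $\tfrac{1}{|K|}\sum_{\edge \in \edges(K)} |\edge|\, v_{K,\edge}$ on each cell $K$, with $v_{K,\edge} = v_\edge\, \bfe_i \cdot \bfn_{K,\edge}$ for a face normal to $\bfe_i$, so it suffices to check face by face that $|\edge|\, v_{K,\edge} = \phi_{K,\edge}$, the diffusion flux of \eqref{eq:lap2}. Substituting $v_\edge = -\partdtg_i w$ from \eqref{discderiveext} and using $\bfe_i \cdot \bfn_{K,\edge} = \pm 1$ on the two $\bfe_i$-normal faces of $K$, the computation rests on the Cartesian geometric identity $|D_\edge| = |\edge|\, d_\edge$, which converts the factor $\tfrac{|\edge|}{|D_\edge|}$ in $\gradtg_\E$ into $\tfrac{1}{d_\edge}$ and reproduces exactly $\tfrac{|\edge|}{d_\edge}(w_K - w_L)$ on interior faces and $\tfrac{|\edge|}{d_\edge}\,w_K$ on boundary faces, with the correct sign after bookkeeping of $\bfn_{K,\edge}$.

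I expect the vanishing of the curl to be the least mechanical step, as it requires verifying the commutation of the two discrete mixed derivatives together with their modified boundary definitions; the remaining two parts reduce respectively to the standard symmetry and positivity of the five-point (resp.\ seven-point) finite-volume Laplacian and to a transparent face-by-face flux matching built on $|D_\edge| = |\edge|\, d_\edge$.
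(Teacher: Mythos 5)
Your proposal is correct, and in fact it supplies more than the paper does: the paper states Lemma \ref{divcurl} without proof, deferring entirely to the citation \cite{eymard2010convergence}. Your three-part verification is the standard argument and each step checks out against the definitions in the text. For the divergence identity, the key geometric fact you isolate, $|D_\edge| = |\edge|\, d_\edge$ (valid both for interior faces, where $d_\edge = d(\bfx_K,\bfx_L)$ is the sum of the two half-widths, and for boundary faces, where $D_\edge = D_{K,\edge}$), does convert $\tfrac{|\edge|}{|D_\edge|}$ into $\tfrac{1}{d_\edge}$ and makes $|\edge|\,v_{K,\edge}$ coincide with the flux $\phi_{K,\edge}$ of \eqref{eq:lap2} face by face, including the sign bookkeeping through $\bfe_i\cdot\bfn_{K,\edge}=\pm 1$. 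The invertibility of $-\Delta_\mesh$ via its symmetric positive definite quadratic form is standard; note that connectedness of $\Omega$ is not even needed, since every bounded component of the mesh has at least one boundary face, which already forces $w\equiv 0$ when the form vanishes. For the curl, the commutation $\eth_i\partdtg_j w = \eth_j\partdtg_i w$ is indeed the crux, and it holds on the MAC grid because the two mixed differences are evaluated on the same corner cell $D_\edged$ and reduce to the same weighted combination of the surrounding primal values; your boundary check is also the right one, since the modified external formulas in \eqref{partialdiscrete} and \eqref{discderiveext} both encode the same half-distance to $\partial\Omega$ and therefore produce identical corner values.

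Two small inaccuracies, neither of which affects the argument: in $d=3$ each component of $\curld$ involves only two coordinate directions, so the commutation at a given corner cell again involves four (not eight) surrounding values $w_K$, arranged in a $2\times 2$ pattern in the relevant coordinate plane; and the clean matching of the interior mixed differences uses that the MAC grid is a tensor-product (structured) partition, so that the face-to-face distances $d_\edge$ and $d_\edged$ entering the two orderings agree --- this is guaranteed by Definition \ref{def:MACgrid} but is worth stating explicitly, as the identity would fail on a non-structured mesh.
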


Now, to any regular function $\varphi  \in \xC^\infty_c(\Omega)$, we associate an interpolant $\varphi_\mesh \in L_\mesh$ defined by:
\begin{equation}
\varphi_\mesh (\bfx) =\varphi(\bfx_K) \textrm{ for all } \bfx \in K, \ \forall K \in \mesh.
\label{apphi}\end{equation}
We are now in position to state the following discrete regularity result (see \cite{eymard2010convergence} for a proof).

\begin{lm}Let $\Omega$ be a MAC compatible bounded domain of $\R^d$, $d=2$ or $d=3$. Let $\disc=(\mesh,\E)$ be a MAC grid. Let $ \vr \in L_\mesh $ and $ w \in L_\mesh $ be defined by
\begin{equation}\label{fvle}
-\Delta_\mesh w = \vr.
\end{equation} 
Let $\varphi \in \xC^\infty_c(\Omega)$ and $\gradtg (w\varphi_\mesh)$ be the gradient of the function $w\varphi_\mesh$ as defined in \eqref{eq:gradext}.
Then there exists $C_\varphi$ only depending on $\varphi$, $\Omega$ and on $\eta_\mesh$ in a non increasing way such that $ \| \gradtg_{\E}(w \varphi_\mesh)) \|_{1,\E,0} \le  C_\varphi \norm{\vr}{L^2(\Omega)}$, where $ \| \cdot \|_{1,\E,0} $ is defined in ($\ref{normfull}$).
\label{locest}\end{lm}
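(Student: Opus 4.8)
The plan is to reduce the estimate to a discrete $H^2$-type (elliptic regularity) bound for $w$, and then to transport that bound through the smooth multiplier $\varphi$ by means of a discrete Leibniz rule. Throughout, I would keep track of the constants so that they depend on the mesh only through the ratio bounds $\eta_\mesh h_\mesh \le d(\bfx_\edge,\bfx_{\edge'}) \le h_\mesh$ encoded in $\eta_\mesh$; this is precisely what makes the final constant non-increasing in $\eta_\mesh$. Since $\norm{\gradtg_\E(w\varphi_\mesh)}{1,\E,0}$ involves first-order differences of the already first-order object $\gradtg_\E(w\varphi_\mesh)$, it is in essence a control of all second-order discrete differences of $w\varphi_\mesh$, which is why the regularity of $w$ is the core issue.

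First I would establish the low-order bounds on $w$. Testing the discrete equation $-\Delta_\mesh w = \vr$ against $w$ and performing the discrete integration by parts on the primal mesh produces the square of a discrete $H^1_0$ seminorm of $w$ on the left-hand side and $\int_\Omega \vr\, w \dx$ on the right. A discrete Poincar\'e inequality (the cell-centered analogue of Theorem \ref{sobolev}) then yields $\norm{w}{L^2(\Omega)}$ and the discrete seminorm of $w$ bounded by $C\,\norm{\vr}{L^2(\Omega)}$, with $C$ depending only on $\Omega$ and $\eta_\mesh$.

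The heart of the argument, and the step I expect to be the \emph{main obstacle}, is the second-order estimate $\sum_{i,j}\norm{D_{ij}w}{L^2(\Omega)}^2 \le C\,\norm{\vr}{L^2(\Omega)}^2$, where $D_{ij}w$ denotes the second-order discrete difference quotient of $w$ in directions $i$ and $j$. Here I would use that the grid is Cartesian, so that cellwise $(-\Delta_\mesh w)_K = -\sum_i (D_{ii}w)_K$ decouples into pure second differences. Writing $\norm{\vr}{L^2(\Omega)}^2 = \sum_{K}|K|\bigl(\sum_i (D_{ii}w)_K\bigr)^2$ and expanding the square, the diagonal terms give $\sum_i \norm{D_{ii}w}{L^2(\Omega)}^2$, while each cross term $\sum_K |K|\,(D_{ii}w)_K (D_{jj}w)_K$ with $i\neq j$ is converted, by summation by parts in both directions, into $\norm{D_{ij}w}{L^2(\Omega)}^2$ plus boundary contributions. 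The delicate point is that the boundary contributions, which stem from the homogeneous Dirichlet condition built into $\Delta_\mesh$ through the flux $\tfrac{|\edge|}{d_\edge}w_K$, carry a favorable sign because $\Omega$ is a finite union of rectangles: this convex, axis-aligned geometry is exactly where MAC-compatibility is indispensable and where no analogue survives on general meshes. Collecting the terms then gives $\sum_{i,j}\norm{D_{ij}w}{L^2(\Omega)}^2 \le \norm{\vr}{L^2(\Omega)}^2$, up to a constant depending on $\eta_\mesh$ through the mesh-ratio factors.

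Finally, I would expand $\gradtg_\E(w\varphi_\mesh)$ by a discrete product rule. A first difference of $w\varphi_\mesh$ splits, up to the usual centered corrections, into a term of the form $(\partdtg_i w)\,\varphi_\mesh$ and one of the form $w\,(\partdtg_i \varphi_\mesh)$; a further difference then produces four types of contributions: a second difference of $w$ times $\varphi_\mesh$; two products of a first difference of $w$ with a first difference of $\varphi_\mesh$; and $w$ times a second difference of $\varphi_\mesh$. Since $\varphi\in \xC^\infty_c(\Omega)$, the discrete differences of $\varphi_\mesh$ defined through \eqref{apphi} are bounded in $L^\infty$ by $\norm{\varphi}{W^{1,\infty}(\Omega)}$ and $\norm{\varphi}{W^{2,\infty}(\Omega)}$ respectively, and $\varphi_\mesh$ vanishes on cells whose centers lie outside $\mathrm{supp}\,\varphi$, so $\gradtg_\E(w\varphi_\mesh)$ is effectively supported away from $\partial\Omega$. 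Estimating in $\norm{\cdot}{1,\E,0}$, the first contribution is controlled by the discrete $H^2$ bound, the two middle ones by the discrete $H^1$ bound on $w$, and the last by the $L^2$ bound on $w$; summing and absorbing the $\varphi$-dependence into a single constant yields $\norm{\gradtg_\E(w\varphi_\mesh)}{1,\E,0} \le C_\varphi \norm{\vr}{L^2(\Omega)}$, with $C_\varphi$ depending on $\varphi$ through $\norm{\varphi}{W^{2,\infty}(\Omega)}$ and non-increasingly on $\eta_\mesh$, as claimed.
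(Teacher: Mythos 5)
The paper itself does not reproduce a proof of Lemma \ref{locest} (it refers to \cite{eymard2010convergence}), so your proposal has to stand on its own. Your overall architecture (low-order bounds, a second-order discrete estimate on $w$, then a discrete Leibniz rule to insert $\varphi$) is reasonable, and Steps 1 and 3 are essentially sound. The genuine gap is in Step 2, which is indeed the heart of the matter: the \emph{global} discrete $H^2$ estimate $\sum_{i,j}\norm{D_{ij}w}{L^2(\Omega)}^2 \le C\normd{\vr}{L^2(\Omega)}$ is false on a general MAC-compatible domain. MAC compatibility only requires $\overline\Omega$ to be a finite union of axis-aligned rectangles; it does \emph{not} imply convexity, and an L-shaped domain is the standard counterexample. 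In the continuous setting $-\Delta w=\vr$ with $w\in H^1_0$ and $\vr\in L^2$ fails to give $w\in H^2$ near a re-entrant corner (the $r^{2/3}$ singularity), and the discrete solution inherits this: its discrete $H^2$ norm blows up as $h_\mesh\to 0$. Concretely, when you sum the cross term $\sum_K |K|\,(D_{ii}w)_K(D_{jj}w)_K$ by parts in both directions, the corner contributions at a re-entrant corner are products of differences of $w$ across the two incoming boundary segments and carry no favourable sign; your assertion that the axis-aligned union structure forces the good sign is exactly where the argument breaks. (There is also a secondary issue on non-uniform grids, where the summation by parts of the cross terms produces commutator terms from the varying $d_\edge$, but those are controllable via $\eta_\mesh$.)

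The repair is to localize \emph{before} performing the second-order energy estimate, not after. Since $\varphi\in \xC^\infty_c(\Omega)$, the quantity $\gradtg_{\E}(w\varphi_\mesh)$ only sees values of $w$ in a fixed compact subset of $\Omega$ (for $h_\mesh$ small), so only an \emph{interior} discrete $H^2$ bound is needed, and that one is true on any MAC-compatible domain: test the equation $-\Delta_\mesh w=\vr$ against a cutoff squared times discrete translates (second differences) of $w$, in the spirit of Nirenberg's difference-quotient method. The cutoff kills all boundary terms, the commutators with the cutoff are absorbed using the first-order bound from your Step 1, and one obtains $\norm{\chi\, D_{ij}w}{L^2(\Omega)}\le C_\chi\norm{\vr}{L^2(\Omega)}$ for any interior cutoff $\chi$. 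Feeding this interior bound into your Step 3 (which otherwise goes through, since all four terms of the discrete Leibniz expansion are supported near $\mathrm{supp}\,\varphi$) yields the lemma. As stated, however, your proof would prove a global estimate that is simply not available on non-convex MAC-compatible domains.
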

%
%
\section{Main theorem}

Now, we are ready to state the main result of this paper.
We recall the notation: 
\begin{equation*} q(d)= \left\{
\begin{array}{l}
  +\infty \ \text{if} \ d=2, \\
 6 \ \text{if} \ d=3.
\end{array}
\right.
\end{equation*}

\begin{thm}\label{mainthm}
Let $\Omega$ be a MAC compatible bounded domain of $\R^d$, $d=2$ or $d=3$. 
Let $\bm{f} \in (L^2(\Omega))^d, M>0,$ and $ \alpha > 1$. Let $\gamma > 3 $ if $d=3$ and $ \gamma > 1 $ if $d=2$. 
Consider a sequence of MAC grids $(\disc_n=(\mesh_n,\E_n))_{n\in \xN}$, with step size $h_{\mesh_n}$ going to zero as $\nti$. 
Assume that there exists $\eta > 0 $ such that  $ \eta \le \eta_{\mesh_n} $ for all $ n \in \mathbb{N} $, where $ \eta_{\mesh_n} $ is defined by ($\ref{regmesh}$).
For a value of the constant $\cstab$ independent of $\nnn$ and sufficiently small with respect to the data, there exists a  solution $(\bu_n,p_n,\vr_n) \in  \Hmeshnzero \times L_{\mesh_n}(\Omega) \times L_{\mesh_n} (\Omega) $ to the scheme \eqref{probdis} with any of the MAC discretizations $\disc_n$; in addition, the obtained density and pressure are positive a.e. in $\Omega$.
Furthermore, up to a subsequence:
\begin{itemize}
\item the sequence $(\bfu_n)_\nnn$ converges in $(L^q(\Omega))^d$ for any $q \in [1,q(d))$ to  a function $\bfu \in \xHone_0(\Omega)^d$, and $(\nabla_{\E_n} \bu_n)_{\nnn} $ converges weakly to $\nabla \bu $ in $L^2(\Omega)^{d \times d} $,
\item the sequence $(\vr_n)_\nnn$ converges in $L^{p}(\Omega)$ for any $p$ such that $1 \leq p < 2 \gamma$ and weakly in $L^{2\gamma}(\Omega)$ to a function $\vr$ of $L^{2\gamma}(\Omega)$,
\item the sequence $(p_n)_\nnn$ converges in $L^{p}(\Omega)$ for any $p$ such that $1 \leq p < 2$ and weakly in $L^2(\Omega)$ to a function $p$ of  $L^2(\Omega)$,
\item $(\bu,p,\vr)$ is a weak solution of Problem \eqref{pbcont_w}--\eqref{pressure1}   in the sense of Definition \ref{ldeuxw}.
\end{itemize}
\label{theo:conv1}
\end{thm}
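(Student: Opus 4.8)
The plan is to transpose to the discrete level the four-step strategy used in the proof of Theorem~\ref{continuityws}. Existence of a solution $(\bu_n,p_n,\vr_n)$ to \eqref{probdis} on each grid $\disc_n$, together with the strict positivity of $\vr_n$ (hence of $p_n$ through \eqref{deos}), is provided by the result of Appendix~\ref{existproof}; the positivity of the density was already explained below \eqref{eq:massflux} from the non-negativity of the inverse of the mass matrix. The core of the proof is then a compactness-and-passage-to-the-limit argument, whose only genuinely new difficulty, compared with the continuous case, concerns the recovery of the equation of state.

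First I would establish the mesh-uniform estimates collected in Proposition~\ref{prop:estimates}. Choosing $\bu_n$ as test function in the equivalent weak form \eqref{eq:weakmom} of the discrete momentum balance, the convective and pressure terms are handled by the discrete kinetic energy balance of Proposition~\ref{estimup} (the discrete counterparts of the vanishing of $\int_\Omega p_n\,\dv\bu_n\,\dx$ via \eqref{crug} and of $\int_\Omega\vr_n\bu_n\otimes\bu_n:\nabla\bu_n\,\dx$), leaving, through the discrete Sobolev inequality of Theorem~\ref{sobolev}, a bound on $\|\bu_n\|_{1,\E_n,0}$ depending only on $\bff$, $\mu$ and $\Omega$. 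For the pressure I would reproduce the discrete Bogovski--Ne\v{c}as argument, building a discrete velocity whose divergence is $p_n-m(p_n)$; this is where the uniform bounds $\norm{p_n}{L^2(\Omega)}\le C$ and $\norm{\vr_n}{L^{2\gamma}(\Omega)}\le C$ are obtained, and where $\cstab$ must be chosen small enough with respect to the data.

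These estimates furnish, through the compactness Theorem~\ref{th:compactness} and weak compactness in the reflexive spaces $L^2(\Omega)$ and $L^{2\gamma}(\Omega)$, a subsequence along which $\bu_n\to\bu$ in $L^q(\Omega)^d$ for $q<q(d)$, $\nabla_{\widetilde\E_n}\bu_n\rightharpoonup\nabla\bu$ weakly in $L^2(\Omega)^{d\times d}$, $p_n\rightharpoonup p$ weakly in $L^2(\Omega)$ and $\vr_n\rightharpoonup\vr$ weakly in $L^{2\gamma}(\Omega)$. To pass to the limit in the discrete mass and momentum balances (the analogue of Step~2) I would rely on the consistency of the discrete operators against fixed smooth test functions, on the fact that the stabilisation term $\cstab h_{\mesh_n}^\alpha(\vr_n-\vr^\star)$ tends to $0$ since $h_{\mesh_n}\to 0$, and on the weak convergence of the products $\vr_n\bu_n$ and $\vr_n\bu_n\otimes\bu_n$; the latter uses exactly the H\"older exponents $\tfrac23+\tfrac16+\tfrac16=1$ of the continuous proof and hence the hypothesis $\gamma>3$ when $d=3$. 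One thus obtains a limit satisfying \eqref{contf}, \eqref{movf}, $\vr\ge 0$ a.e.\ and $\int_\Omega\vr\,\dx=M$.

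The main obstacle, as announced in the abstract, is the equation of state $p=\vr^\gamma$, for only the weak convergence of $p_n$ and $\vr_n$ is at hand. I would carry out the discrete effective-viscous-flux argument of Step~3. Using Lemma~\ref{divcurl}, solve $-\Delta_{\mesh_n}w_n=\vr_n$ and set $\bfv_n=-\gradtg_{\E_n}w_n$, so that $\divedn\bfv_n=\vr_n$ and its discrete curl vanishes; the regularity estimate of Lemma~\ref{locest} then bounds $\|\gradtg_{\E_n}(w_n\varphi_{\mesh_n})\|_{1,\E_n,0}$ by $C_\varphi\,\norm{\vr_n}{L^2(\Omega)}$, uniformly in $n$. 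Inserting $\gradtg_{\E_n}(w_n\varphi_{\mesh_n})$ into \eqref{eq:weakmom} and using the \emph{exact} discrete identity \eqref{gdcd} --- which, in contrast to the non-conforming $P_1$ element, holds with no remainder for the MAC scheme --- reorganises the viscous terms into the discrete effective viscous flux and leads, in the limit, to the discrete analogue of \eqref{palnl}. The delicate point is the convection term, the discrete counterpart of \eqref{ass:kk}: I would rewrite the dual convective contribution, use the diamond-cell mass balance \eqref{eq:mass_D_imp} and identify the weak limit of the discrete convective flux. Because $\int_\Omega\vr_n\,\divedn\bu_n\,\dx\le 0$ holds only up to the stabilisation term, I would only assert the $\liminf$ form $\liminf_{\nti}\int_\Omega p_n\vr_n\,\dx\le\int_\Omega p\vr\,\dx$, which, as noted in the remark after \eqref{palnlff}, is enough. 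Finally, the monotonicity of $y\mapsto y^\gamma$ applied to $G_n=(\vr_n^\gamma-\vr^\gamma)(\vr_n-\vr)\ge 0$ forces $G_n\to 0$ in $L^1(\Omega)$, hence the a.e.\ convergence of $\vr_n$ and $p_n$, the stated strong convergences, and $p=\vr^\gamma$ by uniqueness of the weak limit.
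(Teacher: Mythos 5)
Your proposal is correct and follows essentially the same route as the paper: existence by topological degree, the uniform estimates of Proposition~\ref{prop:estimates} via the discrete kinetic energy balance and a Fortin-interpolated Bogovski field, compactness, passage to the limit in the mass and momentum balances, the discrete effective-viscous-flux argument built on Lemma~\ref{divcurl}, Lemma~\ref{locest} and the exact identity \eqref{gdcd}, the $\liminf$ form of the convergence of $\int_\Omega p_n\vr_n\dx$, and the monotonicity argument with $G_n$. The only place where the paper invests machinery you leave implicit is the convective term in the limit passage, which it handles with the full-grid velocity reconstruction operators of Lemma~\ref{lem:uchapeau}, the algebraic identity of Lemma~\ref{lem-alge2} and the weak BV estimate \eqref{bvweak} to control the upwinding remainders; your outline correctly identifies this as the delicate point and the ingredients you name (diamond-cell mass balance, identification of the weak limit of the discrete flux) are the ones actually used.
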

The convergence part of Theorem \ref{mainthm} remains true with a fixed value of $C_s$ (for instance, $C_s=1$). 
The only difference is that the estimates on the
approximated solutions are valid only for $h_\mesh$ small enough with respect to the data.
The following sections are devoted to the proof of Theorem \ref{theo:conv1}. For the sake of clarity, we shall perform the proofs only in the three-dimensional case (and then $\gamma>3$).
The modifications to be done for the two-dimensional case, which is in fact simpler, are mostly due to the different Sobolev embeddings and are left to the interested reader. Throughout the proof of this theorem, we adapt to the discrete case the strategy followed to prove Theorem \ref{continuityws}.
%
%
\section{Mesh independent estimates}\label{4}

\subsection{Notations}

From now on, we assume that $\Omega$ is a MAC compatible bounded domain of $\R^d$, $d=2$ or $d=3$,   and that all the considered meshes satisfy $\eta \le \eta_\mesh $, for a given $\eta>0$ and with $\eta_\mesh $ defined by ($\ref{regmesh}$). 
The letter $C$ denotes positive real numbers that may tacitly depend on $|\Omega|$, ${\rm diam}(\Omega)$, $\gamma$, $\lambda$, $\mu$, $M$, $\bm{f}$, $\alpha$, $\eta$ and on other parameters; the dependency on these other parameters (if any) is always explicitly indicated.
These numbers can take different values, even in the same formula.
They are always independent of the size of the discretisation $h_\mesh$.
%
%
\subsection{Existence}

Let us now state that the discrete problem ($\ref{probdis}$) admits at least one solution.
This existence result follows from a the topological degree argument (see \cite{dei-85-non} for the theory, \cite{eymard1998error} for the first application to a nonlinear numerical scheme and Appendix \ref{existproof} for the proof).
\begin{thm}\label{thmexist}
There exists a solution $(\bfu,p,\vr) \in \Hmeshzero \times L_\mesh \times L_\mesh$ to Problem ($\ref{probdis}$). Moreover any solution is such that $ \vr > 0 $ a.e in $\Omega$ (in the sense that $ \vr_K>0,\ \forall K \in \mesh$).
\end{thm}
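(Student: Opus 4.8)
The plan is to prove existence by a topological degree argument in the finite-dimensional space $\Hmeshzero$, after eliminating the pressure through the equation of state $p=\vr^\gamma$ and the density through the mass balance. I would establish the positivity of $\vr$ first, since it is what makes $p=\vr^\gamma$ meaningful and what closes the a priori bound.

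\textbf{Positivity and elimination of the density.} Fix an arbitrary $\bfu\in\Hmeshzero$ and read \eqref{dcont} as a linear system $A(\bfu)\,(\vr_K)_{K\in\mesh}=\bg$, with $\bg=(\cstab h_\mesh^\alpha |K|\,\vr^\star)_{K\in\mesh}$ a componentwise positive vector. By the upwind choice \eqref{eq:massflux}, the diagonal entries of $A(\bfu)$ are positive and the off-diagonal ones are nonpositive; moreover the flux conservativity $\fluxK=-\fluxL$ makes each column sum of $A(\bfu)$ equal to $\cstab h_\mesh^\alpha |K|>0$, so that $A(\bfu)$ is a strictly column diagonally dominant M-matrix, invertible with nonnegative inverse \cite[Lemma C.3]{fettah2012numerical}. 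Since $\bg>0$, we get $\vr=A(\bfu)^{-1}\bg>0$, which proves the ``Moreover'' assertion for \emph{any} solution, and simultaneously shows that $\bfu\mapsto\vr(\bfu)$ is well defined and continuous (the entries of $A(\bfu)$ depend continuously on $\bfu$ while $A(\bfu)$ stays invertible). Substituting $\vr(\bfu)$ and $p=\vr(\bfu)^\gamma$ into \eqref{dmom} reduces the scheme to a single continuous equation $G(\bfu)=0$ on $\Hmeshzero$.

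\textbf{Homotopy and uniform a priori bound.} For $t\in[0,1]$ I would consider the map $H(t,\cdot)$ obtained from $G$ by multiplying the convection term $\dv_{\widetilde\E}(\vr\bfu\otimes\bfu)$, the pressure term $\nabla_\E(\vr^\gamma)$ and the forcing $\mathcal P_\edges\bff$ by $t$, while keeping the diffusion $-\mu\Delta_\E\bfu-(\mu+\lambda)\nabla_\E\dv_\mesh\bfu$ and the mass balance \eqref{dcont} unchanged. At $t=0$, $H(0,\cdot)$ is exactly the diffusion operator, which by \eqref{visc} (recall $\mu+\lambda\ge\mu(1-\tfrac2d)\ge0$) is symmetric positive definite, hence a linear isomorphism of $\Hmeshzero$ with unique zero $\bfu=0$ (and $\vr=\vr^\star$); thus $\deg(H(0,\cdot),B_R,0)=+1\ne0$. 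To invoke homotopy invariance I need a radius $R$ independent of $t$ containing all solutions. The velocity bound comes from the discrete kinetic energy balance of Proposition \ref{estimup}: testing the momentum equation with $\bfu$, the convection contribution is nonnegative (numerical dissipation via the dual mass balance \eqref{eq:mass_D_imp}) and the pressure work obeys the discrete analogue of \eqref{crug}, namely $\int_\Omega\vr^\gamma\dv_\mesh\bfu\dx\le0$, so both may be dropped, leaving $\mu\|\bfu\|_{1,\E,0}^2\le t\int_\Omega\mathcal P_\edges\bff\cdot\bfu\dx\le C\|\bff\|_{L^2(\Omega)^d}\|\bfu\|_{1,\E,0}$ by Theorem \ref{sobolev}; this bounds $\|\bfu\|_{1,\E,0}$ uniformly in $t$. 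The density bound is then free on the fixed mesh: positivity together with the total mass conservation \eqref{masscons} gives $0<\vr_K\le M/\min_{K}|K|$, and $p=\vr^\gamma$ is bounded accordingly.

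\textbf{Conclusion and expected difficulty.} With these uniform bounds, no solution of $H(t,\cdot)=0$ lies on $\partial B_R$ for any $t\in[0,1]$, so by homotopy invariance $\deg(G,B_R,0)=\deg(H(0,\cdot),B_R,0)\ne0$, yielding a zero $\bfu$ of $G$, i.e.\ a solution $(\bfu,\vr(\bfu)^\gamma,\vr(\bfu))$ of \eqref{probdis}. I expect the main obstacle to be \emph{not} the a priori estimates, which are cheap on a fixed mesh because the density bound follows from positivity and mass conservation rather than from the delicate discrete Bogovskii/Ne\v{c}as construction required later for the mesh-independent bounds of Proposition \ref{prop:estimates}, but rather the careful verification that the reduced map $G$ and the homotopy $H$ are genuinely continuous — which rests on the invertibility of $A(\bfu)$ persisting along the homotopy — together with the check that the discrete kinetic energy identity and the sign of the discrete pressure work $\int_\Omega\vr^\gamma\dv_\mesh\bfu\dx$ survive when the nonlinear terms are scaled by $t$.
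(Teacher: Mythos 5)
Your proposal is correct in substance and rests on the same tool as the paper's proof --- a finite-dimensional topological degree / homotopy argument --- but the setup is genuinely different. The paper works in the product space $\Hmeshzero\times L_\mesh$ with the pair $(\bfu,\vr)$ as unknown, homotopes \emph{both} equations (in \eqref{F2} the term $\dive_\mesh^{\upw}(\vr\bfu)$ is multiplied by $\zeta$ while the stabilization term is kept), and must therefore include a strict lower bound $\varepsilon<\vr_K$ in the open set $W$ on which the degree is computed; that lower bound comes from the explicit estimate $\vr_K\ge \cstab\min_L|L|\,h_\mesh^\alpha\vr^\star/(\cstab h_\mesh^\alpha|\Omega|+\sum_{\edge}|\edge||u_{K,\edge}|)$ combined with the velocity bound. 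You instead eliminate the density by inverting the (unscaled) upwind mass balance --- the M-matrix / column-dominance argument is exactly the one the paper invokes through \cite[Lemma C.3]{fettah2012numerical} in Section \ref{massbalance} --- and compute the degree of a reduced map on $\Hmeshzero$ alone. This buys two things: the positivity of $\vr$ for \emph{any} solution (the ``Moreover'' part) is obtained up front rather than as a by-product, and Lemma \ref{lmm:renorm} applies verbatim along the whole homotopy since \eqref{dcont} is never deformed, whereas with the paper's homotopy the renormalization inequality has to be rechecked with the factor $\zeta$. The price is the verification that $\bfu\mapsto\vr(\bfu)$ is continuous on all of $\Hmeshzero$, which holds because $A(\bfu)$ is strictly column diagonally dominant for every $\bfu$ thanks to the $\cstab h_\mesh^\alpha|K|$ term. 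Both routes are sound.

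One step of your energy estimate is, however, stated incorrectly, though it is repairable exactly as in Step 1 of the proof of Proposition \ref{prop:estimates}. The convection contribution obtained by testing with $\bfu$ is \emph{not} nonnegative: using \eqref{eq:mass_D_imp} it equals $-\frac12\cstab h_\mesh^\alpha\int_\Omega(\vr-\vr^\star)|\bfu|^2\dx$, whose first part has the wrong sign and must be absorbed into $\frac\mu2\|\bfu\|_{1,\E,0}^2$ via the inverse inequality $\|\bfu\|_{L^\infty(\Omega)}\le C(\eta)h_\mesh^{-1/2}\|\bfu\|_{1,\E,0}$; this is precisely where the smallness condition \eqref{cstestab} on $\cstab$ (or, for fixed $\cstab$, the smallness of $h_\mesh$) enters, and your summary omits it --- taken literally, your argument would prove the bound with no restriction on $\cstab$, which is more than the paper claims. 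Likewise the discrete analogue of \eqref{crug} is not $\int_\Omega\vr^\gamma\dived\bfu\dx\le0$ but $\int_\Omega\vr^\gamma\dived\bfu\dx\le C\cstab h_\mesh^\alpha$ (Lemma \ref{lmm:renorm} with $\beta=\gamma$), which is still sufficient since it only adds a bounded constant to the right-hand side. With these two corrections the uniform-in-$t$ bound, and hence the degree argument, goes through.
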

%
%
\subsection{Energy Inequality}

Let us now turn to stability issues: in order to prove the convergence of the scheme, we wish to obtain some uniform (with respect to the mesh) bounds on the solutions to \eqref{probdis}, see  Proposition \ref{prop:estimates} below.
We begin by a technical lemma \cite[Lemma 5.4]{eymard2010convergence} which is useful  not only for stability issues, but also for the three following reasons.
First, it allows an estimate on $\bfu$ in a dicrete $\xH^1_0$ norm (Proposition \ref{prop:estimates}), as in \cite[Proposition 5.5]{eymard2010convergence}. 
Second, it yields a so called weak BV estimate, which  depend on the mesh and does not give a direct compactness result on the sequence of approximate solutions; however it is useful
 in the passage to the limit in the mass equation, in the discrete convective term and in the equation of state.
Third, Lemma \ref{lmm:renorm}  gives (with $\beta=1$) a crucial inequality which is also used in order to pass to the limit in the equation of state. 
\begin{lm}
Let $\vr \in L_\mesh$ and $\bfu \in \Hmeshzero$ satisfy \eqref{dcont}.
Then, for any $\beta \geq 1$:
\[
\int_\Omega \vr^\beta \dived \bfu \dx
+\frac 1 2 \sum_{\edge \in \edgesint} \beta\, |\edge|\  \vr_{\edge,\beta} \ |\uedge|\ [\vr]_\edge^2
\leq C C_s\ h_\mesh^\alpha,
\]
where $C$ depends only on $\mass$, $\beta$, $\mu$, $\alpha$, $\Omega$ and $\eta$, and, for any $\edge \in \edgesint$, $\edge=K|L$,
\begin{equation}  
\vr_{\edge,\beta} = \min( \vr_K^{\beta-2}, \vr_L^{\beta-2}). 
\label{rhosigmabeta}
\end{equation}

\label{lmm:renorm}\end{lm}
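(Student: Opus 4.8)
The plan is to test the discrete mass balance \eqref{dcont} against a discrete renormalization entropy, mimicking the way one proves the continuous identity $\int_\Omega\vr^\beta\dv\bfu\,\dx=0$. Note first that the positivity $\vr_K>0$ for all $K\in\mesh$ (Theorem \ref{thmexist}) makes all the nonlinear expressions below meaningful. Introduce the convex function $\psi$ determined on $(0,+\infty)$ by $s\psi'(s)-\psi(s)=s^\beta$, that is $\psi(s)=\frac{1}{\beta-1}s^\beta$ when $\beta>1$ and $\psi(s)=s\ln s$ when $\beta=1$; in both cases $\psi''(s)=\beta\,s^{\beta-2}$. First I would multiply \eqref{dcont}, written on a cell $K$, by $\psi'(\vr_K)$ and sum over $K\in\mesh$, which gives
\[
\sum_{K\in\mesh}\psi'(\vr_K)\sum_{\edge\in\edges(K)}\fluxK + \cstab h_\mesh^\alpha\sum_{K\in\mesh}|K|\,\psi'(\vr_K)(\vr_K-\vr^\star)=0.
\]

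Next I would reorganize the flux double sum face by face. Using the conservativities $\fluxK=-\fluxL$ and $u_{K,\edge}=-u_{L,\edge}$, together with the fact that $u_{K,\edge}=0$ (hence $\fluxK=0$) on external faces, only interior faces survive, and an interior face $\edge=K|L$ contributes $(\psi'(\vr_K)-\psi'(\vr_L))\fluxK$ with $\fluxK=\medge\,\vr_\edge^{\upw}\,u_{K,\edge}$. In parallel, $\int_\Omega\vr^\beta\dived\bfu\,\dx=\sum_{K}\vr_K^\beta\sum_{\edge\in\edges(K)}\medge\,u_{K,\edge}$ reorganizes into $\sum_{\edge=K|L}\medge\,u_{K,\edge}(\vr_K^\beta-\vr_L^\beta)$. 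The crucial step is a face-wise comparison: writing $s^\beta=s\psi'(s)-\psi(s)$ and expanding $\psi$ to second order between $\vr_K$ and $\vr_L$ (the upwinding making the relevant base point the upwind density), the contribution of $\edge$ to $\int_\Omega\vr^\beta\dived\bfu\,\dx$ minus its contribution to the tested flux sum equals exactly
\[
-\frac12\,\medge\,|\uedge|\,\psi''(\xi_\edge)\,[\vr]_\edge^2,\qquad \xi_\edge\in\bigl[\min(\vr_K,\vr_L),\max(\vr_K,\vr_L)\bigr],
\]
where I used $|u_{K,\edge}|=|\uedge|$. I would carry out this short algebra separately for $u_{K,\edge}\ge0$ and $u_{K,\edge}<0$; the two computations are symmetric.

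Summing over interior faces and substituting the tested mass balance then yields the exact identity
\[
\int_\Omega\vr^\beta\dived\bfu\,\dx + \frac12\sum_{\edge\in\edgesint}\beta\,\medge\,\xi_\edge^{\beta-2}\,|\uedge|\,[\vr]_\edge^2 = -\cstab h_\mesh^\alpha\sum_{K\in\mesh}|K|\,\psi'(\vr_K)(\vr_K-\vr^\star).
\]
To reach the stated inequality I would replace $\xi_\edge^{\beta-2}$ by $\vr_{\edge,\beta}=\min(\vr_K^{\beta-2},\vr_L^{\beta-2})$: since $s\mapsto s^{\beta-2}$ is nondecreasing for $\beta\ge2$ and nonincreasing for $1\le\beta<2$, in both cases $\xi_\edge^{\beta-2}\ge\vr_{\edge,\beta}$, and since the quadratic term is nonnegative this only lowers the left-hand side. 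Finally I would bound the right-hand side: for $\beta>1$, the pointwise Young inequality $\vr^\star\vr_K^{\beta-1}\le\frac1\beta(\vr^\star)^\beta+\frac{\beta-1}\beta\vr_K^\beta$ gives $-\psi'(\vr_K)(\vr_K-\vr^\star)\le\frac{1}{\beta-1}(\vr^\star)^\beta$, so that summing against $|K|$ with $\sum_K|K|=|\Omega|$ bounds the right-hand side by $\frac{(\vr^\star)^\beta|\Omega|}{\beta-1}\,\cstab h_\mesh^\alpha=C\cstab h_\mesh^\alpha$; for $\beta=1$ I would instead use the total mass conservation \eqref{masscons}, i.e. $\sum_K|K|(\vr_K-\vr^\star)=0$, to cancel the linear part of $\psi'(s)=\ln s+1$, and the boundedness from below of $s\mapsto(s-\vr^\star)\ln s$ on $(0,+\infty)$ for the logarithmic part.

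The main obstacle is the face-wise second-order expansion that isolates the defect with the correct sign and the exact factor $\frac12\beta\xi_\edge^{\beta-2}$; once this identity is in hand, everything else (reorganizing the conservative fluxes, the monotonicity replacement of $\xi_\edge$, and the Young or mass-conservation bound of the stabilization term) is routine. The only genuinely separate case is $\beta=1$, where the entropy is $s\ln s$ rather than a pure power and the control of the right-hand side rests on mass conservation rather than on Young's inequality.
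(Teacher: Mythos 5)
Your proof is correct. Note that the paper itself does not prove this lemma: it is quoted from \cite[Lemma 5.4]{eymard2010convergence}, and your argument --- testing the upwind mass balance against $\psi'(\vr)$ with $s\psi'(s)-\psi(s)=s^\beta$, extracting the face-wise defect $-\tfrac12\medge|\uedge|\psi''(\xi_\edge)[\vr]_\edge^2$, minorizing $\xi_\edge^{\beta-2}$ by $\vr_{\edge,\beta}$ via the monotonicity of $s\mapsto s^{\beta-2}$, and controlling the stabilization term by Young's inequality (resp.\ mass conservation for $\beta=1$) --- is exactly the standard renormalization argument used there. One cosmetic remark: in your face-wise expansion the Taylor base point that actually appears is the \emph{downwind} density, not the upwind one, but this is immaterial since the Lagrange remainder $\tfrac12\psi''(\xi_\edge)[\vr]_\edge^2$ with $\xi_\edge$ between $\vr_K$ and $\vr_L$ has the same form either way.
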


In order to obtain an  estimate on the pressure, we need to introduce a so-called Fortin interpolation operator, \ie\ an operator which maps velocity functions to discrete functions and preserves the divergence. 
The following lemma is given in  \cite[Theorem 1]{gallouet2012w1}, and we repeat it here with our notations for the sake of clarity.
We will use this Lemma later on with $p=2$.

\begin{lm}[Fortin interpolation operator] \label{lem:fortin}
Let $\mathcal D= (\mesh, \edges)$ be a MAC grid of $\Omega$. Let $1 \le p <\infty$.
For $\bfv=(v_1,...,v_d) \in (W_{0}^{1,p}(\Omega))^{d}$ we define $\widetilde{\mathcal P}_{\edges} \bfv$  by 
\begin{align}
  &\widetilde{\mathcal P}_{\edges}\bfv =\begin{pmatrix}\widetilde{\mathcal P}_{\edges}^{(1)} v_1, \cdots, \widetilde{\mathcal P}_{\edges}^{(1)} v_d \end{pmatrix}\in  \Hmeshzero, 
  \mbox{ where for }i= 1, \ldots d, \nonumber\\
  & \begin{array}{ll}                                                                                         
    \widetilde{\mathcal P}_{\edges}^{(i)}: & W_{0}^{1,p}(\Omega) \longrightarrow \Hmeshizero \\
					 & \vi\;\longmapsto  \widetilde{\mathcal P}_{\edges}\vi \mbox{ defined by}\\
			   & \qquad \displaystyle  \widetilde{\mathcal P}^{(i)}_{\edges}\vi(\bfx)=(\widetilde{\mathcal P}^{(i)}_{\edges}\vi)_\edge=\frac{1}{\vert\edge\vert}\int_{\edge} \vi(\bfx) \dgammax, \  \forall \bfx\in D_{\edge}, \  \edge\in\edgesi.
\end{array} \label{interp-moyenne}
\end{align}
Then  $\widetilde{\mathcal P}_{\E}$ satisfies:
\begin{equation}\label{strongcvfortin}
\|  \widetilde{\mathcal P}_{\E} \bvarphi - \bvarphi \|_{L^\infty(\Omega)} \le C_{\bvarphi} h_\mesh, \ \forall \bvarphi \in C_c^\infty(\Omega)^d.
\end{equation}
For $q\in L^{1}(\Omega)$, we define $\mathcal{P}_{\mesh}q  \in L_\mesh$ by:
\begin{equation}
    \mathcal{P}_{\mesh} q (\bfx) = \frac 1 {\vert K \vert} \int_K q (\bfx) \dx. \label{Pmesh}
\end{equation}
Let $\eta_\mesh >0$ be defined by \eqref{regmesh}.
Then, for $\bvarphi\in (W_{0}^{1,p}(\Omega))^{d}$, 
\begin{subequations}
\begin{align}
  &  \dive_{\mesh}(\widetilde{\mathcal P}_{\edges}\bvarphi) ={\mathcal P}_{\mesh} (\dive  \bvarphi),
  \label{conserv-div-interp} \\
  & \| \nabla_{\widetilde\E} \widetilde {\mathcal P}_{\edges} \bvarphi\|_{(L^p(\Omega)^{d \times d})}\leq  C_{\eta_{\mesh}} \|\nabla\bvarphi\|_{(L^{p}(\Omega))^{d}}, \label{norme-h1-interp}
\end{align}
\end{subequations}
where  $C_{\eta_\mesh}$ depends only on $\Omega$, $p$ and on $\eta_\mesh$ in a decreasing way.
\end{lm}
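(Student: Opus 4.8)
The plan is to prove the three asserted statements in turn. I would establish the two integral identities \eqref{conserv-div-interp} and \eqref{norme-h1-interp} first for $\bvarphi\in C_c^\infty(\Omega)^d$ and then pass to a general $\bvarphi\in W_0^{1,p}(\Omega)^d$ by density, using that the trace operator $W^{1,p}(K)\to L^p(\edge)$ is continuous, so that the face averages $\tfrac{1}{\medge}\int_\edge v_i\dedge$ are well defined and depend continuously on $\bvarphi$; since $\Hmeshzero$ is finite dimensional for a fixed mesh, both sides of each identity/estimate then converge along the approximating sequence.

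\textbf{Divergence preservation \eqref{conserv-div-interp}.} I would prove this by a direct computation on each primal cell, the point being that the face-average definition of $\widetilde{\mathcal P}_\edges$ introduces no quadrature error. Fix $K\in\mesh$; inserting the definition of $\dive_\mesh$ and of $u_{K,\edge}=u_\edge\,\bfe_i\cdot\bfn_{K,\edge}$ with $u_\edge=\tfrac{1}{\medge}\int_\edge v_i\dedge$, the factor $\medge$ cancels and the $\edge$-term becomes $\int_\edge v_i\,(\bfe_i\cdot\bfn_{K,\edge})\dedge$. Since on a face $\edge\in\edgesi$ the outward normal $\bfn_{K,\edge}$ equals $\pm\bfe_i$, this is exactly $\int_\edge\bfv\cdot\bfn_{K,\edge}\dedge$. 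Summing over $\edge\in\edges(K)$ reassembles $\int_{\partial K}\bfv\cdot\bfn_K\dedge$, and the divergence (Green) formula gives $\int_K\dive\bfv\dx=|K|\,\mathcal P_\mesh(\dive\bfv)$ on $K$. This computation is valid verbatim for any $\bvarphi\in W_0^{1,p}(\Omega)^d$.

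\textbf{Uniform convergence \eqref{strongcvfortin}.} For $\bvarphi\in C_c^\infty(\Omega)^d$ and $\bfx\in D_\edge$ with $\edge\in\edgesi$, the $i$-th component of $\widetilde{\mathcal P}_\edges\bvarphi-\bvarphi$ at $\bfx$ is $\tfrac{1}{\medge}\int_\edge\bigl(\varphi_i(\bfy)-\varphi_i(\bfx)\bigr)\dedge(\bfy)$, whose modulus is bounded by the oscillation of $\varphi_i$ over $D_\edge\cup\edge$. As every $\bfy\in\edge$ satisfies $|\bfy-\bfx|\le\diam(D_\edge)\le C\,h_\mesh$, the mean-value inequality bounds this by $\|\nabla\varphi_i\|_{L^\infty(\Omega)}\,C\,h_\mesh$, which yields \eqref{strongcvfortin} with $C_{\bvarphi}$ depending on $\|\nabla\bvarphi\|_{L^\infty(\Omega)}$.

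\textbf{$W^{1,p}$ stability \eqref{norme-h1-interp}, and the main obstacle.} This is the crux. By \eqref{partialdiscrete}, each entry of $\nabla_{\widetilde\E}\widetilde{\mathcal P}_\edges\bvarphi$ is, on a cell $D_\edged$ with $\edged=\overrightarrow{\edge\vert\edge'}$, the finite difference $\tfrac{1}{d_\edged}\bigl((\widetilde{\mathcal P}^{(i)}_\edges v_i)_{\edge'}-(\widetilde{\mathcal P}^{(i)}_\edges v_i)_\edge\bigr)$, a difference of averages of $v_i$ over two parallel faces $\edge,\edge'\in\edgesi$. I would split into the normal case $j=i$ and the transverse cases $j\neq i$. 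When $j=i$, the faces $\edge,\edge'$ are the two opposite faces of a single primal cell $K=D_\edged$ (hence of equal area $\medge$), and applying the one-dimensional fundamental theorem of calculus along $\bfe_i$ and integrating over the common cross-section gives $(\widetilde{\mathcal P}^{(i)}_\edges v_i)_{\edge'}-(\widetilde{\mathcal P}^{(i)}_\edges v_i)_\edge=\tfrac{1}{\medge}\int_K\partial_i v_i\dx$; since $|K|=\medge\,d_\edged$, this finite difference is exactly the mean of $\partial_i v_i$ over $K$, so Jensen's inequality yields $|D_\edged|\,|(\eth_i\widetilde{\mathcal P}^{(i)}_\edges v_i)_{D_\edged}|^p\le\int_K|\partial_i v_i|^p\dx$ with \emph{no} geometric constant, and summation controls these terms by $\|\partial_i v_i\|_{L^p(\Omega)}^p$. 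The genuine difficulty is the transverse case $j\neq i$: here $\edge$ and $\edge'$ are faces orthogonal to $\bfe_i$ that are adjacent in the $\bfe_j$ direction but, on a non-uniform grid, may have \emph{different} sizes, so their averages cannot be linked by a single clean box integral of $\partial_j v_i$. I would resolve this by comparing each of the two face averages to the average over the intermediate rectangle $D_\edged=\edged\times\overline{\bfx_\edge\bfx_{\edge'}}$ (or over the overlap of the two faces), writing each comparison as an integral of $\nabla v_i$ over a region of measure comparable to $|D_\edged|$, and absorbing the resulting ratios of face areas, of $d_\edged$ to the mesh size, and of the volumes of these regions into a constant controlled by $\eta_\mesh$; this is precisely where $\eta_\mesh$ enters and why $C_{\eta_\mesh}$ degrades as $\eta_\mesh\to0$. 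A Jensen/Hölder step then gives $|D_\edged|\,|(\eth_j\widetilde{\mathcal P}^{(i)}_\edges v_i)_{D_\edged}|^p\le C_{\eta_\mesh}\int_{\omega_\edged}|\nabla v_i|^p\dx$ over a neighbourhood $\omega_\edged$, and summing with the finite-overlap property of the family $(\omega_\edged)$ produces \eqref{norme-h1-interp}. Finally, density of $C_c^\infty(\Omega)^d$ in $W_0^{1,p}(\Omega)^d$ together with trace continuity extends both \eqref{conserv-div-interp} and \eqref{norme-h1-interp} to all $\bvarphi\in W_0^{1,p}(\Omega)^d$.
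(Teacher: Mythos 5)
Your argument is correct, but note that the paper itself offers no proof of this lemma: it is quoted verbatim from the reference \cite{gallouet2012w1} (Theorem~1 there), so there is no internal proof to compare against. Your route is essentially the standard one from that reference: \eqref{conserv-div-interp} is exact by Gauss--Green because the face-average interpolant commutes with the flux form of the divergence, \eqref{strongcvfortin} is a first-order Taylor/oscillation bound, and for \eqref{norme-h1-interp} you correctly isolate the only delicate point, namely the transverse differences of averages over parallel faces of possibly different sizes on a non-uniform grid, which is exactly where the regularity parameter $\eta_\mesh$ enters (your normal-direction computation, giving the mean of $\partial_i v_i$ over $K$ with constant $1$, is also how the sharp part of the estimate is obtained). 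One small point of care in the transverse case: the face $\edge$ is not contained in $\overline{D_\edged}$ (only half of it is), so the comparison region $\omega_\edged$ must be taken as a union of neighbouring primal cells rather than $D_\edged$ itself; your appeal to a finite-overlap family $(\omega_\edged)$ already accommodates this, so the sketch stands.
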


We can now state and prove the estimates on a discrete solution that we are seeking.
These estimates may be seen as an equivalent for the discrete case of Step 1 of the proof of Theorem \ref{continuityws}.

\begin{prop}\label{prop:estimates}
Let $(\bfu,p,\vr) \in  \Hmeshzero \times L_\mesh \times L_\mesh$  be a solution to the scheme, \ie\ system \eqref{probdis}.
Taking $C_s$ small enough with respect to the data (namely $\mu$, $\mass$, $\Omega$, $\alpha$, $\eta$)
there exists  $C_1$  depending only on $\bff$, $\mu$, $\mass$, $\Omega$, $\gamma$, $\alpha$ and on $\eta$  such that:
\begin{equation}\label{ienergie}
\| \bfu \|_{1,\E,0} +  \norm{p}{L^2(\Omega)} + \norm{\vr}{L^{2\gamma}(\Omega)} \leq C_1.
\end{equation}\label{estimup}

Moreover, for any $\beta \in [1,\gamma],$ there exists $C_2$ depending only on $\bff$, $\mass$, $\Omega$, $\gamma$, $\mu$, $\alpha$, $\beta$ and $\eta$ such that 
\begin{equation}
\sum_{\edge \in \edgesint} |\edge| \ \vr_{\edge,\beta}\ |\uedge|\ [\vr]_\edge^2 \leq C_2,
\label{rho_jump}
\end{equation}
where $\vr_{\edge,\beta}$ is defined in \eqref{rhosigmabeta}.
In particular, since $ \gamma>3$, we get by taking $\beta = 2$ in \eqref{rho_jump}: 
\begin{equation}
\sum_{\edge \in \edgesint} |\edge|\  |\uedge|\ [\vr]_\edge^2 \leq C_2.
\label{bvweak}\end{equation}
\end{prop}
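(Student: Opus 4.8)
The plan is to transpose to the discrete setting the two sub-steps 1.a and 1.b of the proof of Theorem~\ref{continuityws}, and then to read off the weak BV bound \eqref{rho_jump} directly from Lemma~\ref{lmm:renorm}. For the velocity bound I would take $\bfv=\bfu$ in the equivalent weak form \eqref{eq:weakmom}. The diffusion terms then produce $\mu\|\bfu\|^2_{1,\E,0}+(\mu+\lambda)\|\dive_\mesh\bfu\|^2_{L^2(\Omega)}$, by \eqref{gradient-and-innerproduct}, which is coercive thanks to \eqref{visc}. The pressure--divergence term is the discrete analogue of \eqref{crug}: using $p=\vr^\gamma$ and applying Lemma~\ref{lmm:renorm} with $\beta=\gamma$, and discarding the nonnegative jump sum, yields $\int_\Omega p\,\dive_\mesh\bfu\dx=\int_\Omega\vr^\gamma\dive_\mesh\bfu\dx\le C C_s h_\mesh^\alpha$, which is bounded. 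The forcing term is controlled by the discrete Sobolev inequality (Theorem~\ref{sobolev}) and Young's inequality.

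The delicate point of the velocity estimate is the convective contribution $\int_\Omega\dv_{\widetilde\E}(\vr\bfu\otimes\bfu)\cdot\bfu\dx$. Because the dual-face interpolation is centered, $u_\edged=\tfrac12(u_\edge+u_{\edge'})$, a reordering of the sums using the conservativity $F_{\edge,\edged}=-F_{\edge',\edged}$ converts this quantity into $\tfrac12\sum_{i=1}^d\sum_{\edge\in\edgesinti}u_\edge^2\big(\sum_{\edged\in\edgesd(D_\edge)}F_{\edge,\edged}\big)$; inserting the dual mass balance \eqref{eq:mass_D_imp} gives the exact discrete kinetic energy identity $\int_\Omega\dv_{\widetilde\E}(\vr\bfu\otimes\bfu)\cdot\bfu\dx=-\tfrac12 C_s h_\mesh^\alpha\sum_{i,\edge}|D_\edge|(\vr_{D_\edge}-\vr^\star)u_\edge^2$. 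The part carrying the factor $\vr_{D_\edge}\ge 0$ has the favourable sign and moves to the left-hand side, while the remaining term, bounded by $\tfrac12 C_s h_\mesh^\alpha\vr^\star\|\bfu\|^2_{L^2(\Omega)}\le C C_s\|\bfu\|^2_{1,\E,0}$, is absorbed into $\mu\|\bfu\|^2_{1,\E,0}$ as soon as $C_s$ is small with respect to the data. This is where the announced smallness condition on $C_s$ is fixed, and it produces $\|\bfu\|_{1,\E,0}\le C_1$ with no prior control on the pressure.

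For the pressure I would mimic sub-step 1.b, replacing the bare Bogovski construction by its composition with the Fortin operator. With $m=\frac1{|\Omega|}\int_\Omega p\dx$, Lemma~\ref{lem:bogos} (with $r=2$) furnishes $\bfv\in H^1_0(\Omega)^d$ such that $\dive\bfv=p-m$ and $\|\bfv\|_{(H^1_0(\Omega))^d}\le C\norm{p-m}{L^2(\Omega)}$; setting $\bfw=\widetilde{\mathcal P}_\edges\bfv$, identities \eqref{conserv-div-interp} and \eqref{norme-h1-interp} give $\dive_\mesh\bfw=p-m$ (since $p\in L_\mesh$) together with $\|\bfw\|_{1,\E,0}\le C\norm{p-m}{L^2(\Omega)}$. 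Taking $\bfv=\bfw$ in \eqref{eq:weakmom}, and using $\int_\Omega\dive_\mesh\bfu\dx=\int_\Omega\dive_\mesh\bfw\dx=0$, isolates $\norm{p-m}{L^2(\Omega)}^2=\int_\Omega p\,\dive_\mesh\bfw\dx$ minus the diffusion, source and convective terms. The diffusion and source terms are bounded by $C\norm{p-m}{L^2(\Omega)}$ through the velocity estimate and Theorem~\ref{sobolev}, while the convective term is controlled by the discrete counterpart of \eqref{eq:biarritz2}, namely $C\norm{\vr}{L^{2\gamma}(\Omega)}\|\bfu\|^2_{1,\E,0}\|\bfw\|_{1,\E,0}$. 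Since $p=\vr^\gamma$ gives $\norm{\vr}{L^{2\gamma}(\Omega)}=\norm{p}{L^2(\Omega)}^{1/\gamma}$, this leads to $\norm{p-m}{L^2(\Omega)}\le C\big(1+\norm{p}{L^2(\Omega)}^{1/\gamma}\big)$. Because $\int_\Omega p^{1/\gamma}\dx=\int_\Omega\vr\dx=M$ by \eqref{masscons}, Lemma~\ref{estl2} with $r=1/\gamma\in(0,1)$ yields $\norm{p}{L^2(\Omega)}\le C$, hence $\norm{\vr}{L^{2\gamma}(\Omega)}\le C$, which completes \eqref{ienergie}.

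Finally, \eqref{rho_jump} follows at once from Lemma~\ref{lmm:renorm}: for $\beta\in[1,\gamma]$ the term $\int_\Omega\vr^\beta\dive_\mesh\bfu\dx$ is bounded by $\norm{\vr}{L^{2\gamma}(\Omega)}^{\beta}\norm{\dive_\mesh\bfu}{L^2(\Omega)}\le C$ thanks to \eqref{ienergie}, so the nonnegative jump sum is bounded by some $C_2$; the admissible choice $\beta=2$ (legitimate since $\gamma>3$), for which $\vr_{\edge,2}=1$ by \eqref{rhosigmabeta}, gives \eqref{bvweak}. I expect the genuine obstacle to be the rigorous handling of the discrete convective operator in both estimates: establishing the exact kinetic energy identity and absorbing its non-sign-definite remainder via the smallness of $C_s$ for the velocity bound, and proving the H\"older-type estimate $C\norm{\vr}{L^{2\gamma}(\Omega)}\|\bfu\|^2_{1,\E,0}\|\bfw\|_{1,\E,0}$ on the dual fluxes for the pressure bound. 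The latter is exactly where the restriction $\gamma>3$ (in $d=3$) is needed, since it keeps the conjugate Lebesgue exponents below the critical Sobolev exponent $6$; the weak BV bound \eqref{rho_jump} is in turn what permits one to absorb the consistency and stabilization remainders generated by the upwind mass fluxes.
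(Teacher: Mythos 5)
Your overall architecture coincides with the paper's proof: test with $\bfu$ for the velocity bound, use Lemma~\ref{lmm:renorm} with $\beta=\gamma$ for the pressure--divergence term, use Bogovski composed with the Fortin operator $\widetilde{\mathcal P}_\edges$ together with \eqref{conserv-div-interp} and Lemma~\ref{estl2} for the pressure, and read \eqref{rho_jump} off Lemma~\ref{lmm:renorm} once $\vr$ is bounded in $L^{2\gamma}(\Omega)$. Your kinetic energy identity
$\int_\Omega\dv_{\widetilde\E}(\vr\bfu\otimes\bfu)\cdot\bfu\dx=-\tfrac12 C_s h_\mesh^\alpha\sum_{i,\edge}|D_\edge|(\vr_{D_\edge}-\vr^\star)u_\edge^2$
is also correct. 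However, there is a genuine gap in how you exploit it: you have the two contributions swapped. Once the convective term is moved to the right-hand side of the energy balance, it reads $+\tfrac12 C_s h_\mesh^\alpha\sum|D_\edge|\vr_{D_\edge}u_\edge^2-\tfrac12 C_s h_\mesh^\alpha\vr^\star\|\bfu\|_{L^2(\Omega)}^2$. It is the $\vr^\star$ part that is nonpositive and can be discarded; the part carrying $\vr_{D_\edge}$ is the nonnegative, problematic one. Your proposed bound $\tfrac12 C_s h_\mesh^\alpha\vr^\star\|\bfu\|^2_{L^2(\Omega)}\le C C_s\|\bfu\|^2_{1,\E,0}$ controls precisely the harmless term, and no analogous bound is available for $\tfrac12 C_s h_\mesh^\alpha\int\vr\,|\bfu|^2\dx$: at this stage of the argument the only information on $\vr$ is $\int_\Omega\vr\dx=M$, so one cannot put $\vr$ in any $L^q$ with $q>1$ and $|\bfu|^2$ in the conjugate space below the Sobolev exponent.

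The paper's resolution is to write $\tfrac12 C_s h_\mesh^\alpha\sum|D_\edge|\vr_{D_\edge}u_\edge^2\le\tfrac12 C_s M h_\mesh^\alpha\|\bfu\|_{L^\infty(\Omega)}^2$ and then invoke the inverse-type estimate $\|\bfu\|_{L^\infty(\Omega)}\le C(\eta)\,h_\mesh^{-1/2}\|\bfu\|_{1,\E,0}$ (obtained from $h_\mesh^3\|\bfu\|^6_{L^\infty}\le C(\eta)\|\bfu\|^6_{L^6}$ and Theorem~\ref{sobolev}), which yields a contribution $C(\eta)C_s M h_\mesh^{\alpha-1}\|\bfu\|^2_{1,\E,0}$. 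This is where the hypothesis $\alpha>1$ --- which your argument never uses --- enters: it guarantees $h_\mesh^{\alpha-1}\le\diam(\Omega)^{\alpha-1}$, so that the term is absorbed into $\mu\|\bfu\|^2_{1,\E,0}$ under the smallness condition \eqref{cstestab} on $C_s$. Everything else in your proposal (pressure estimate, use of $\int_\Omega p^{1/\gamma}\dx=M$, the role of $\gamma>3$ via $2\gamma\ge6$, and the derivation of \eqref{rho_jump} and \eqref{bvweak}) is sound and matches the paper.
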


\begin{proof}
In order to prove Proposition \ref{prop:estimates}, we proceed in several steps. 
We follow the proof established in the continuous case to obtain uniform bounds of the approximate solutions.

\vspace{1cm}

\textbf{Step 1} : Estimates on $ \| \bfu \|_{1,\E,0} $ and inequality ($\ref{rho_jump}$).

Taking $\bfu$ as a test function in \eqref{eq:weakmom}, using the Hold\"er's inequality and thanks to the fact that the discrete $\xHone$ norm controls the $L^2$  norm (see Theorem \ref{sobolev}) , we have:
\begin{multline}
\frac{\mu}{2} \| \bfu \|_{1,\E,0}^2+(\mu+\lambda) \| \dived\bfu \|_{L^2(\Omega)}^2 -\int_\Omega p \, \dived\bfu  \dx 
\\ +\sum_{i=1}^3 \sddi \frac{1}{2}F_{\edge,\edged} (u_{\edge} +u_{\edge'})( u_{\edge}-u_{\edge'}) \le C
\end{multline}
where $C$ depends only on $\bff$  and  $\Omega$.
Moreover, by virtue of ($\ref{eq:mass_D_imp}$),
\begin{multline*}
\sum_{i=1}^3 \sddi \frac{1}{2}F_{\edge,\edged} (u_{\edge} +u_{\edge'})( u_{\edge}-u_{\edge'}) =\sum_{i=1}^3 \sddi \frac{1}{2}F_{\edge,\edged}( (u_{\edge})^2-(u_{\edge'})^2) \\
= \sum_{i=1}^3 \sum_{\edge \in \E_{\intt}^{(i)}} \frac{(u_{\edge})^2}{2} \sum_{ \edged \in \widetilde{\E}(D_\edge)}  F_{\edge,\edged}  = -\frac{1}{2} \cstab h_\mesh^\alpha ( \int_\Omega \vr \| \bfu \|^2 \dx - \vr^\star \int_\Omega \| \bfu \|^2 \dx)
\end{multline*}

Lemma \ref{lmm:renorm} with $\beta=\gamma$ yields, since $p=\vr^\gamma$:
\[
\int_\Omega p \, \dived \bfu  \dx +\frac 1 2 
\sum_{\edge \in \edgesint} \gamma\, |\edge|\ \vr_{\edge,\gamma}\ |\uedge|  [\vr]_\edge^2 \leq C,
\]
where $C$ depends only on $\mass$, $\gamma$, $\alpha$, $\mu$, $\Omega$ and $\eta$.

Consequently
\begin{equation*}
\frac{\mu}{2} \| \bfu \|_{1,\E,0}^2 +\frac 1 2 
\sum_{\edge \in \edgesint} \gamma\, |\edge|\ \vr_{\edge,\gamma}\ |\uedge|  [\vr]_\edge^2  \le \frac{1}{2}\cstab \mass h^\alpha \| | \bfu | \|_{L^\infty(\Omega)}^2 +C
\end{equation*}
By virtue of Theorem \ref{sobolev} we have $ h_\mesh^3 \| \bfu \|_{L^\infty (\Omega)^3}^6 \le C(\eta) \| \bu \|_{L^6(\Omega)^3}^6 \le C(\eta) \| \bfu \|_{1,\E,0}^6 $ and therefore 
\begin{equation*}
\|   \bfu  \|_{L^\infty (\Omega)^3} \le C(\eta) \frac{1}{\sqrt{h_\mesh}} \| \bfu \|_{1,\E,0}.
\end{equation*}
Summing these two relations, we thus obtain:
\begin{equation}
\frac{\mu}{2} \| \bfu \|_{1,\E,0}^2 +\frac 1 2 
\sum_{\edge \in \edgesint} \gamma\, |\edge|\ \vr_{\edge,\gamma}\ |\uedge|\ [\vr]_\edge^2 \leq C 
+ \frac{1}{2} C(\eta)\cstab M  h_\mesh^{\alpha-1} \| \bfu \|_{1,\E,0}^2
\label{stab1}\end{equation}
and consequently, since $\alpha > 1 $,
\begin{equation*}
\frac{1}{2} ( \mu - C(\eta) \cstab M \diam(\Omega)^{\alpha-1} ) \| \bfu \|_{1,\E,0}^2+\frac 1 2 
\sum_{\edge \in \edgesint} \gamma\, |\edge|\ \vr_{\edge,\gamma}\ |\uedge|\ [\vr]_\edge^2   \leq C.
\end{equation*}
Let us choose $ \cstab $ such that $ 0 < \cstab < \frac{\mu}{C(\eta) M \diam(\Omega)^{\alpha-1}} $;  a possible choice is:
\begin{equation}\label{cstestab}
0< \cstab < \frac{\mu \eta^6}{  M \diam(\Omega)^{\alpha-1}}.
\end{equation}
Then
\begin{equation*}
\| \bfu \|_{1,\E,0} +\frac 1 2 
\sum_{\edge \in \edgesint} \gamma\, |\edge|\ \vr_{\edge,\gamma}\ |\uedge|\ [\vr]_\edge^2  \leq C.
\end{equation*}
\textbf{Step 2}: Estimate on $ \| p \|_{L^2(\Omega)} $.

Let  $m(p)$ stand for the mean value of $p$. By Lemma \ref{lem:bogos}, there exists  $ \bv=(v_1,v_2,v_3) \in H_0^1(\Omega)^3$ such that 
\begin{equation*}
\left\{
\begin{array}{l}
 \dive \bv=p-m(p), \\
  \| \bv \|_{H_0^1(\Omega)^3} \le C(\Omega) \| p-m(p) \|_{L^2(\Omega)},
\end{array}
\right.
\end{equation*}
Multiplying \eqref{dmom} by $ \widetilde{\mathcal P}_{\edges}\bfv $ (where $ \widetilde{\mathcal P}_{\edges} $ is defined in Lemma \ref{lem:fortin}) and integrating over $\Omega$ we have:
\begin{equation*}
  \| p-m(p) \|_{L^2(\Omega)}^2 \le C \| p-m(p) \|_{L^2(\Omega)}+\sum_{i=1}^3 \sddi F_{\edge,\edged} \frac{1}{2}(u_{\edge}+u_{\edge'}) (  (\widetilde{\mathcal P}_{\edges}^{(i)}v_{i})_\edge - (\widetilde{\mathcal P}_{\edges}^{(i)}v_{i})_{\edge'})
\end{equation*}
where C depends on $ \bm{f}, \Omega, \eta,\mu, \alpha, \gamma,M $. Now keeping in mind the definition of the dual fluxes (see ($\ref{eq:flux_eK}$) and ($\ref{eq:flux_eorth}$)) and the definition of $ \| \cdot \|_{1,\E,0} $, a technical but straightforward computation gives
\begin{align*}
\Big| \sum_{i=1}^3 \sddi F_{\edge,\edged} \frac{1}{2}(u_{\edge}+u_{\edge'}) (  (\widetilde{\mathcal P}_{\edges}^{(i)}v_{i})_\edge - (\widetilde{\mathcal P}_{\edges}^{(i)}v_{i})_{\edge'})\Big|  &\le C \| \vr \|_{L^6(\Omega)} \| \bfu \|_{L^6(\Omega)}^2  \|\widetilde{\mathcal P}_{\edges}\bfv \|_{1,\E,0} 
\\ &\le C \| p \|_{L^2(\Omega)}^{\frac{1}{\gamma}} \| p -m(p) \|_{L^2(\Omega)},
\end{align*}
where C depends on $ \bm{f}, \Omega, \eta,\mu, \alpha, \gamma,M $. 
The last inequality is obtained thanks to the the energy inequality ($\ref{ienergie}$) to get a bound on $ \| \bfu \|_{L^6 (\Omega)} $ (thanks to Theorem $\ref{sobolev}$) and  H\"older's inequality  since $ 2\gamma \ge 6 $ and $ p = \vr^\gamma$.
Consequently
\begin{equation*}
\| p-m(p) \|_{L^2(\Omega)} \le C( \| p \|_{L^2(\Omega)}^{\frac{1}{\gamma}} +1)
\end{equation*}
where $C$ depends on $\bff$, $\mu$, $\mass$, $\Omega$, $\gamma$, $\alpha$ and on $\eta$. Since $\int_\Omega p^{\frac{1}{\gamma}} \dx = \int_\Omega \vr \dx =M$, Lemma \ref{estl2} gives an $L^2$ bound for $p$ depending only on the data. To conclude, we obtain a $L^{2\gamma}$ bound for the density since $p = \vr^\gamma$.

In order to prove ($\ref{rho_jump}$) for $1 \le \beta \le \gamma$, let us use once again Lemma \ref{lmm:renorm}, to obtain:
\[
\frac 1 2 \sum_{\edge \in \edgesint} \beta\, |\edge|\  \vr_{\edge,\beta} \ |\uedge|\ [\vr]_\edge^2
\leq -\int_\Omega \vr^\beta \dived \bfu \dx+ C,
\]
where $C$ depends on $ M,\beta,\mu,\alpha,\Omega,\eta$. Since $\vr$ is bounded in $L^{2\beta}(\Omega)$ and $ \| \dv_\mesh \bfu \|_{L^2(\Omega)} $ is controlled by $\| \bfu \|_{1,\E,0} $, this
concludes the proof.
\end{proof}
Note that if, in Proposition \ref{prop:estimates}, we choose a fixed value of $C_s$, for instance $C_s=1$, There exists $\bar h>0$, depending of the data, such that the conclusions of Proposition \ref{prop:estimates} are true for  
$h_\mesh \le \bar h$. This is easy to see with \refe{stab1}.

%
%

\section{Convergence analysis}\label{sec:qdm_and_mass}

The aim of this section is to pass to the limit in the discrete equations ($\ref{dcont}$)--($\ref{deos}$). As in the continuous case, thanks to the estimates established in the previous section, taking a sequence of meshes, we can assume the convergence, up to a subsequence, of the  discrete solution to some $(\bu,p,\vr)$, in a convenient sense. We will first prove that $(\bfu,p,\vr)$ satisfies the weak form of Problem \refe{pbcont_w}-\refe{ci2}. We then prove that $ p=\vr^\gamma $.
The first difficulty is the convergence of the discrete convective term (the second  consists in passing to the limit in the equation of state). Indeed it is not easy to manipulate the discrete convective operator defined with the dual fluxes.
We then introduce velocity interpolators  in order to  transform the discrete convective operator. 
It relies on the reconstruction of each velocity component on all faces (or edges in 2D) of the mesh.
Similar results are used in \cite{gallouet2015convergence} for the  incompressible case.
%
%
\subsection{Passing to the limit in the mass and momentum balance equations}

\begin{lm}[Velocity interpolators]\label{lem:uchapeau}
For a given MAC grid $\disc =(\mesh, \edges)$, we define, for $i,j= 1,2,3$, the  full grid velocity reconstruction operator with respect to $(i,j)$ by 
 \begin{align} \mathcal R_\edges^{(i,j)} :\; & \; \Hmeshizero \to H_{\E,0}^{(j)} \nonumber\\ 
					  & v \mapsto \mathcal R_\edges^{(i,j)}  v =  \sum_{\edge\in \edgesintj} (\mathcal R_\edges^{(i,j)}  v)_\edge \characteristic_{D_\edge}, \label{def:ufull}
\end{align}
where 
 \begin{align}
    &\label{uhat}
    (\mathcal R_\edges^{(i,i)}  v)_\edge = v_\edge \textrm{ for } \edge \in \edges_{\intt}^{(i)},
    \\
    &\nonumber \textrm{and, for }\edge=K|L \in\edgesintj, \, j \ne i,
    \\
    &(\mathcal R_\edges^{(i,j)}  v)_\edge = \frac{1}{4} \sum_{\edge'\in\mathcal{N}_\edge} v_{\edge'}, 
   \, \,  \mathcal{N}_\edge=\{\edge'\in\edgesi, \edge' \in \edgesK \cup \edgesL\}. \label{nedge} 
\end{align}

For any $i=1,2,3,$ we also define a projector from $ \Hmeshi $ into $L_\mesh$ by
 \begin{align} \mathcal R_\mesh^{(i)} :\; & \; \Hmeshi \to L_\mesh \nonumber\\ 
					  & v \mapsto \mathcal R_\mesh^{(i)}  v =  \sum_{K \in \mesh} (\mathcal R_\mesh^{(i)}  v)_K \ \characteristic_{K}, \label{def:ufullprimal}
\end{align}
where 
  \begin{equation}
    \label{eq:interpolate_primal}
     (\mathcal R_\mesh^{(i)}  v)_K = \frac{1}{2} \sum_{\edge \in \edges^{(i)}(K)} v_\edge.
\end{equation}
Then there exists $C \ge 0$, depending only on the regularity of the mesh (defined by ($\ref{regmesh}$)) in a decreasing way, such that, for any $1 \le q < \infty$ and for any $i,j = 1,2, 3$, 
\begin{equation*} 
\Vert \mathcal R_\edges^{(i,j)} v \Vert_{L^q(\Omega)}  \le C \Vert v \Vert_{L^q(\Omega)} \textrm{ for any } v \in \Hmeshizero,
\end{equation*}
\begin{equation*} 
\Vert \mathcal R_\mesh^{(i)}  v \Vert_{L^q(\Omega)}  \le C \Vert v \Vert_{L^q(\Omega)} \textrm{ for any } v \in \Hmeshi.
\end{equation*}

\begin{proof}
 Let us prove the bound on $\Vert \mathcal R_\edges^{(i,j)} v\Vert_{L^q(\Omega)}$ for $d= 2$,  $i=1$ and $j=2$. 
 The other cases are similar. 
 In this case, for a given $\edge=K|L \in \edgesinti$, the edge $\edge$ belongs to  $ \mathcal{N}_{\edge'}$
 for  $\edge' \in \{{\edge_K^t}, {\edge_K^b}, {\edge_L^t},{\edge_L^b} \}$ where ${\edge_K^t}$  (resp. ${\edge_K^b}$) denotes the top (resp. bottom) edge of $K$, as depicted in Figure \ref{fig:full-interpolates}.
Let $v \in \Hmeshizero$; 
 by definition of  $\mathcal R_\edges^{(i,j)} v$, noting that $\left[\frac 1 4 \left(a+b+c+d\right)\right]^q \le a^q+b^q+c^q+d^q$, we have:
\[
  \Vert \mathcal R_\edges^{(i,j)} v \Vert_{L^q(\Omega)}^q \le \sum_{\substack{\edge \in \edgesinti\\ \edge=K|L} } |v_\edge|^q (|D_{\edge_K^t}| + |D_{\edge_K^b}| + |D_{\edge_L^t}| + |D_{\edge_L^b}|)  
   \le 4 \eta^{-2}\sum_{\substack{\edge \in \edgesinti\\ \edge=K|L}} |v_\edge|^q |D_\edge|,
\]
which concludes the proof.
\end{proof}

\begin{figure}[hbt]
\centering
\begin{tikzpicture}[scale=1]
\draw[-](0,0)--(6,0)--(6,2)--(0,2)--(0,0);
\draw[fill=orange!10] (0,0)--(4,0)--(4,2)--(0,2)--(0,0);
\draw[fill=cyan!10] (4,0)--(6,0)--(6,2)--(4,2)--(4,0);
\draw[-] (0,2)--(4,2)--(4,3.5)--(0,3.5)--(0,2);
\draw[-] (4,2)--(6,2)--(6,3.5)--(4,3.5)--(4,2);
\draw[-] (0,-1)--(4,-1)--(4,0)--(0,0)--(0,-1);
\draw[-] (4,-1)--(6,-1)--(6,0)--(4,0)--(4,-1);
\draw[-,thick](4,0)--(4,2);
%
%
\draw[very thick,color=green](0,2.75)--(4,2.75)--(4,1)--(0,1)--(0,2.75);%
\draw[very thick,color=red](4,2.75)--(4,1)--(6,1)--(6,2.75)--(4,2.75);%
\draw[very thick,color=cyan](0,1)--(0,-0.5)--(2,-0.5)--(4,-0.5)--(4,1)--(0,1);%
\draw[very thick,color=brown](4,1)--(4,-0.5)--(6,-0.5)--(6,1)--(4,1);%

\path (0.5,1) node[] { $K$};
\path (5.75,1) node[] { $L$};
\path (3.8,0.85) node[rotate=90] { $\edge=K|L$};
\path (2,1.5) node[green] {$  D_{\edge_K^t}$};
\path (5,1.5) node[red] {$  D_{\edge_L^t}$};
\path (2,0.3) node[cyan] {$  D_{\edge_K^b}$};
\path (5,0.3) node[brown] {$  D_{\edge_L^b}$};
\end{tikzpicture}

\caption{Full grid velocity interpolate.}
\label{fig:full-interpolates}
\end{figure}
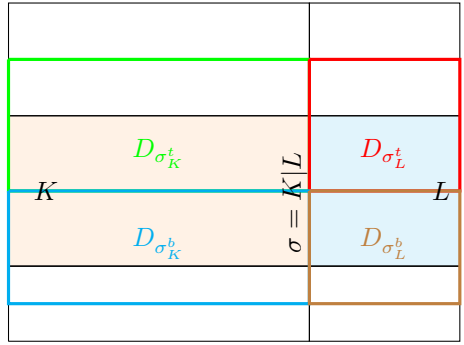
\end{lm}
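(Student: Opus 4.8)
The plan is to reduce both bounds to the single elementary observation that, on each target cell, the reconstruction operator returns a \emph{convex average} of finitely many input face values, and then to transpose the summation, controlling the resulting geometric weights by the mesh regularity. By symmetry it suffices to treat one representative mixed case, say $i=1$, $j=2$, the other pairs $i\neq j$ and the three-dimensional situation being identical up to relabelling; the diagonal case $i=j$ is trivial, since $(\mathcal R_\edges^{(i,i)} v)_\edge = v_\edge$ makes $\mathcal R_\edges^{(i,i)}$ an exact (norm-preserving) reconstruction and the inequality holds with $C=1$.

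First I would invoke convexity of $t\mapsto t^q$ on $\xR_+$. For $\edge \in \edgesintj$ the value $(\mathcal R_\edges^{(i,j)} v)_\edge$ is the arithmetic mean of the four numbers $v_{\edge'}$, $\edge'\in\mathcal N_\edge$, so Jensen's inequality gives $|(\mathcal R_\edges^{(i,j)} v)_\edge|^q \le \tfrac14\sum_{\edge'\in\mathcal N_\edge}|v_{\edge'}|^q \le \sum_{\edge'\in\mathcal N_\edge}|v_{\edge'}|^q$. Multiplying by $|D_\edge|$ and summing over the target faces yields
\begin{equation*}
\| \mathcal R_\edges^{(i,j)} v \|_{L^q(\Omega)}^q \le \sum_{\edge \in \edgesintj} |D_\edge| \sum_{\edge' \in \mathcal N_\edge} |v_{\edge'}|^q .
\end{equation*}
I would then exchange the two sums, which is the combinatorial core of the argument. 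A fixed input face $\edge'=K|L \in \edgesinti$ lies in $\mathcal N_\edge$ only for the four target faces that are the top and bottom faces of $K$ and of $L$ (the dual cells $D_{\edge_K^t}, D_{\edge_K^b}, D_{\edge_L^t}, D_{\edge_L^b}$ of Figure~\ref{fig:full-interpolates}), so the overlap is bounded by four independently of the mesh. Transposing gives a weight attached to $|v_{\edge'}|^q$ equal to the total volume of those (at most four) neighbouring dual cells, and each such volume is comparable to $|D_{\edge'}|$: by the regularity estimate~\eqref{regmesh} the distances $d(\bfx_\edge,\bfx_{\edge'})$ are pinched between $\eta\,h_\mesh$ and $h_\mesh$, so ratios of neighbouring dual volumes are bounded by a power of $\eta^{-1}$ (in fact $\eta^{-2}$ when $d=2$). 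This produces a constant $C=C(\eta)$, non-increasing in $\eta$, with
\begin{equation*}
\| \mathcal R_\edges^{(i,j)} v \|_{L^q(\Omega)}^q \le C \sum_{\edge' \in \edgesinti} |v_{\edge'}|^q\, |D_{\edge'}| = C\,\| v \|_{L^q(\Omega)}^q ,
\end{equation*}
and the first estimate follows on taking $q$-th roots, absorbing $C^{1/q}$ into a single constant uniform in $q\ge 1$ (since the pre-root constant is $\ge 1$). The bound for $\mathcal R_\mesh^{(i)}$ is obtained along the very same lines, and in fact more cleanly: here one averages the two opposite $i$-faces of each primal cell $K$, weights by $|K|$, and uses that each $i$-face is adjacent to at most two cells together with the exact identity $\sum_{K\ni\edge}|K| = 2|D_\edge|$.

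The only genuinely delicate point is this bounded-overlap bookkeeping combined with the volume comparison: one must verify that every input face feeds only a mesh-independent number of target cells and that the associated volume ratios remain bounded uniformly over the family of meshes, both of which rest squarely on~\eqref{regmesh}. The faces and cells touching $\partial\Omega$ require a small separate (but identical in spirit) accounting, since there $D_\edge = D_{K,\edge}$ is a half-cell and, for $\mathcal R_\edges^{(i,j)}$, the vanishing of $v$ on $\edgesexti$ simply removes some of the entries in the averages; neither affects the counting or the regularity comparison. Everything else is the routine Jensen-plus-transposition mechanism.
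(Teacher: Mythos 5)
Your argument is correct and is essentially the paper's own proof: both rest on the convexity bound $\bigl[\tfrac14(a+b+c+d)\bigr]^q \le a^q+b^q+c^q+d^q$, a transposition of the double sum with the observation that each input face $\edge=K|L$ feeds exactly the four target faces $\edge_K^t,\edge_K^b,\edge_L^t,\edge_L^b$, and the regularity parameter $\eta_\mesh$ of \eqref{regmesh} to compare the neighbouring dual volumes with $|D_\edge|$, yielding the constant $4\eta^{-2}$. The extra remarks on the diagonal case, the boundary faces and the primal projector are consistent with the paper's ``the other cases are similar.''
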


\begin{lm}[Convergence of the full grid velocity interpolate]\label{lem:conv-uchapeau}
  Let $(\mesh_n, \edges_n)_\nnn$ be a sequence of MAC meshes such that $h_{\mesh_n} \to 0$ as $\nti$, and, for all $n$, $\eta_{\mesh_n} \ge \eta >0$.  Let $1 \le q < \infty$.
  
  Let $i,j \in\{1,2,3\}$, $ v \in L^q(\Omega)$ and $(v_n)_\nnn$ be such that $v_n \in H_{\E_n,0}^{(i)}$  and $v_n$ converges to $ v$ as $\nti$  in $L^q(\Omega)$. 
  Let $\mathcal R_{\edges_n}^{(i,j)}$ be the full grid velocity reconstruction operator defined by \eqref{def:ufull}. 
  Then $\mathcal  R_{\edges_n}^{(i,j)} v_n \to  v $ in $L^q(\Omega)$ as $\nti$.
  
  Similarly, if $(v_n)_\nnn$ is such that $v_n \in H_{\E_n}^{(i)}$ and $v_n$ converges to $ v$ as $\nti$  in $L^q(\Omega)$, then,
  $\mathcal R_{\mesh_n}^{(i)}  v_n \to  v $ in $L^q(\Omega)$ as $\nti$, where $\mathcal R_{\mesh_n}^{(i)}  v$ is defined by \eqref{def:ufullprimal}.
\end{lm}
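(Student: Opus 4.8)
The plan is to combine the uniform $L^q$-bound from Lemma \ref{lem:uchapeau} with a density argument in the spirit of the standard three-$\epsilon$ reasoning. Since the reconstruction operators $\mathcal R_{\edges_n}^{(i,j)}$ are bounded on $L^q(\Omega)$ by a constant $C$ independent of $n$ (this is precisely Lemma \ref{lem:uchapeau}, where the uniform regularity $\eta_{\mesh_n}\ge \eta$ is used to keep $C$ independent of $n$), it suffices to prove the convergence on the dense subspace $C_c^\infty(\Omega)$ and then to transfer it to a general $v\in L^q(\Omega)$ by boundedness. Concretely, given $\epsilon>0$ I would choose $\varphi\in C_c^\infty(\Omega)$ with $\Vert v-\varphi\Vert_{L^q(\Omega)}\le\epsilon$ and introduce its natural face interpolate $\varphi_n$ defined by $(\varphi_n)_\edge=\varphi(\bfx_\edge)$ for $\edge\in\edgesi$; since $\varphi$ vanishes near $\partial\Omega$, one has $(\varphi_n)_\edge=0$ on external faces, so $\varphi_n\in\Hmeshizero$ (on $\mesh_n$). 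The decomposition
\[
\mathcal R_{\edges_n}^{(i,j)} v_n - v = \mathcal R_{\edges_n}^{(i,j)}(v_n-\varphi_n) + \bigl(\mathcal R_{\edges_n}^{(i,j)}\varphi_n-\varphi\bigr) + (\varphi-v)
\]
then splits the problem into three pieces.

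For the first piece I would use Lemma \ref{lem:uchapeau} to get $\Vert \mathcal R_{\edges_n}^{(i,j)}(v_n-\varphi_n)\Vert_{L^q(\Omega)}\le C\Vert v_n-\varphi_n\Vert_{L^q(\Omega)}$, and then $\Vert v_n-\varphi_n\Vert_{L^q(\Omega)}\le \Vert v_n-v\Vert_{L^q(\Omega)}+\Vert v-\varphi\Vert_{L^q(\Omega)}+\Vert \varphi-\varphi_n\Vert_{L^q(\Omega)}$, whose first term tends to $0$ by hypothesis, second term is $\le\epsilon$, and third term tends to $0$ (handled below). The last piece is $\le\epsilon$ by the choice of $\varphi$. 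The heart of the argument is therefore the middle (consistency) term, which I would control by exploiting that both interpolation and reconstruction are \emph{local convex averages} over stencils of diameter $O(h_{\mesh_n})$. Indeed $(\mathcal R_{\edges_n}^{(i,j)}\varphi_n)_\edge=\tfrac14\sum_{\edge'\in\mathcal N_\edge}\varphi(\bfx_{\edge'})$, and every $\bfx_{\edge'}$, $\edge'\in\mathcal N_\edge$, lies within distance $Ch_{\mesh_n}$ of $\bfx_\edge$ because the $\edge'$ are faces of the two primal cells adjacent to $\edge$, whose diameters are bounded by $h_{\mesh_n}$. The Lipschitz continuity of $\varphi$ then gives $|(\mathcal R_{\edges_n}^{(i,j)}\varphi_n)_\edge-\varphi(\bfx_\edge)|\le C\Vert\nabla\varphi\Vert_{L^\infty(\Omega)}h_{\mesh_n}$, and combining this with $|\varphi(\bfx)-\varphi(\bfx_\edge)|\le C\Vert\nabla\varphi\Vert_{L^\infty(\Omega)}h_{\mesh_n}$ for $\bfx\in D_\edge$ yields $\Vert \mathcal R_{\edges_n}^{(i,j)}\varphi_n-\varphi\Vert_{L^\infty(\Omega)}\le C_\varphi\,h_{\mesh_n}\to 0$, hence convergence in $L^q(\Omega)$ on the bounded domain $\Omega$. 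The same one-stencil estimate (with the stencil reduced to $D_\edge$ itself) also gives $\Vert\varphi_n-\varphi\Vert_{L^\infty(\Omega)}\le C_\varphi h_{\mesh_n}\to 0$, disposing of the remaining summand. Letting first $\nti$ and then $\epsilon\to 0$ concludes the proof of the first statement.

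The primal projector case is entirely analogous: $(\mathcal R_{\mesh_n}^{(i)}v_n)_K=\tfrac12\sum_{\edge\in\edges^{(i)}(K)}v_{n,\edge}$ is again a local average, here over the two $i$-faces of $K$, so the identical density argument and $O(h_{\mesh_n})$ consistency estimate apply, now invoking the uniform bound of Lemma \ref{lem:uchapeau} for $\mathcal R_{\mesh_n}^{(i)}$. I do not expect a genuine obstacle here: the only point requiring care is the consistency estimate for smooth functions, that is, verifying that the averaging sets $\mathcal N_\edge$ (resp. $\edges^{(i)}(K)$) are geometrically local of size $O(h_{\mesh_n})$; this follows from $\diam(K)\le h_{\mesh_n}$, while the mesh regularity $\eta_{\mesh_n}\ge\eta$ is needed only to make the uniform $L^q$-bound mesh-independent.
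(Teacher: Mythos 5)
Your proof is correct and follows essentially the same route as the paper's: uniform $L^q$-boundedness of the reconstruction operators (Lemma \ref{lem:uchapeau}) combined with density of $C_c^\infty(\Omega)$ and a consistency estimate on smooth functions, assembled by a three-$\varepsilon$ argument. The only cosmetic differences are that the paper interpolates $\varphi$ through the cell mean-value projector $\mathcal P_{\edges_n}^{(i)}$ (using $\mathcal R_n v_n=\mathcal R_n\circ\mathcal P_n v_n$) rather than through point values at face centers, and that it merely asserts the consistency term tends to zero where you actually prove the $O(h_{\mesh_n})$ bound via locality of the stencil.
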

\begin{proof}
 We give the proof for  $\mathcal R_{\edges_n}^{(i,j)}$ (the proof is similar for  $\mathcal R_{\mesh_n}^{(i)}$).
 
 Let $\varphi \in C_c^\infty(\Omega)$. 
  Denoting $\mathcal R_{\edges_n}^{(i,j)}$ by  $\mathcal R_n$ and $\mathcal P_{\edges_n}^{(i)}$ (defined by \eqref{interpedges}) by  $\mathcal P_n$ for short,  we have: 
  \begin{align*}
   \Vert \mathcal  R_n  v_n -  v \Vert_{L^q(\Omega)} \le 
   \Vert  \mathcal  R_n  v_n -\mathcal  R_n \circ \mathcal  P_n v \Vert_{L^q(\Omega)} +
   \Vert   \mathcal  R_n \circ \mathcal P_n v - \mathcal  R_n \circ \mathcal  P_n \varphi \Vert_{L^q(\Omega)} +
   \\
   \Vert  \mathcal  R_n \circ \mathcal P_n \varphi -    \varphi \Vert_{L^q(\Omega)} + \Vert  \mathcal  \varphi - v \Vert_{L^q(\Omega)}.
   \end{align*}
  Since $ \mathcal R_n  v_n = \mathcal R_n \circ \mathcal  P_n v_n$,  and thanks to the fact that
  $\Vert  \mathcal R_n w \Vert_{L^q(\Omega)} \le C \Vert w \Vert_{L^q(\Omega)}$ (for some $C>0$, see Lemma \ref{lem:uchapeau}) and that $\Vert \mathcal P_n w \Vert_{L^q(\Omega)} \le \Vert w \Vert_{L^q(\Omega)}$, we get
   \[ 
   \Vert \mathcal R_n  v_n -v \Vert_{L^q(\Omega)} \le 
   C \Vert v_n - v \Vert_{L^q(\Omega)} +
   C\Vert v - \varphi \Vert_{L^q(\Omega)} +
    \Vert \mathcal  R_n \circ \mathcal P_n \varphi - \varphi \Vert_{L^q(\Omega)} + \Vert  \mathcal  \varphi - v \Vert_{L^q(\Omega)}.
   \]
   Let $\varepsilon >0$. 
   Let us choose $\varphi \in C_c^\infty(\Omega)$ such that $\Vert \mathcal \varphi - v \Vert_{L^q(\Omega)} \le \frac \varepsilon {C+1}$. 
   There exists $n_1$ such that $C \Vert v_n -  v \Vert_{L^q(\Omega)} \le \varepsilon$ for all $n \ge n_1$, and there exists $n_2$ such that $\Vert  \mathcal  R_n \circ \mathcal P_n \varphi - \varphi \Vert_{L^q(\Omega)} \le \varepsilon,$ for all $n \ge n_2$.
   Therefore $\Vert \mathcal R_n  v_n - v \Vert_{L^q(\Omega)} \le 3 \varepsilon$ for $n \ge \max(n_1,n_2)$, which concludes the proof. 
\end{proof}

With the above definitions the following algebraic identity holds (a similar identity is in \cite{gastaldo2014consistent}):

\begin{lm}\label{lem-alge2}
Let $\vr \in L_\mesh$ and $\bfu=(u_1,u_2,u_3) \in \Hmeshzero$.
Let $i \in \{1,2,3\}$ and $\varphi = (\varphi_\edge)_{\edge \in \edgesinti} \in \Hmeshizero$ be a discrete scalar function. Let the primal fluxes $F_{K,\edge}$ be given by ($\ref{eq:massflux}$) and let the dual fluxes $F_{\edge,\edged} $ be given by ($\ref{eq:flux_eK}$) or ($\ref{eq:flux_eorth}$).Then we have
\begin{equation*}
  \sum_{\edge \in \edgesinti} \sum_{\edged \in \edgesd(D_\edge)}
F_{\edge,\edged} u_{\edged} \varphi_{\edge}= \sum_{j=1}^3 S_{j},
\end{equation*} 
where
\[
    S_i  = \sum_{\substack{K =  \overrightarrow{[\edge \edge']}\\ \edge, \edge' \in \edgesi}}
    (\vr_\edge^{up} u_\edge |D_{ K,\edge}| + \vr_{\edge'}^{up} u_{\edge'} |D_{ K,\edge'}|)(\Rr_\mesh^{(i)}u_i)_K
    \frac {\varphi_{\edge} - \varphi_{\edge'}}{d(\bfx_\edge, \bfx_{\edge'})},
\]
and, for $j \ne i$,
\[
    S_{j} = \sum_{\substack{\edgeperp \in \edgesintj}} |D_{\edgeperp} | \frac{ \vr^{up}_\edgeperp u_{\edgeperp}}{4} \left[  \left(   {u_{\edge_3}+u_{\edge_1}}\right) \frac {\varphi_{\edge_3} - \varphi_{\edge_1}}{d(\bfx_{\edge_1},\bfx_{\edge_3})} +  \left(   {u_{\edge_4}+u_{\edge_2}}\right) \frac {\varphi_{\edge_4} - \varphi_{\edge_2}}{d(\bfx_{\edge_2},\bfx_{\edge_4})}    \right]
\]
where $(\edge_k)_{k=1,\ldots,4}$ are the four faces (or edges) belonging to $\edgesi$, neighbors of $\edgeperp$, with 
$\bfx_{\edge_3}  \bfx_{\edge_1}=\bfx_{\edge_4}  \bfx_{\edge_2}= \beta \bfe_j$, $\beta>0$ (see Figure \ref{fig-tau}).
 \end{lm}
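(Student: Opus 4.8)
The statement is a purely algebraic reorganization, and the plan is to rewrite the left-hand side as a sum over \emph{oriented} interior dual faces and then to split it according to the two geometric configurations of a dual face, described in \eqref{eq:flux_eK} and \eqref{eq:flux_eorth}.

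First I would use the conservativity of the dual mass fluxes, $\fluxd = -F_{\edge',\edged}$ for an interior dual face $\edged = \edge|\edge'$, together with the vanishing of the flux through any dual face lying on the boundary, to pair the two contributions attached to the dual cells $D_\edge$ and $D_{\edge'}$ sharing $\edged$. Since $u_\edged$ is common to both, this collapses the double sum into
\begin{equation*}
\sum_{\edge \in \edgesinti}\sum_{\edged \in \edgesd(D_\edge)} \fluxd\, u_\edged\, \varphi_\edge
= \sum_{\substack{\edged \in \edgesdinti\\ \edged = \overrightarrow{\edge\vert\edge'}}} \fluxd\, u_\edged\, (\varphi_\edge - \varphi_{\edge'}),
\end{equation*}
where external faces drop out because the corresponding fluxes vanish and $\varphi \in \Hmeshizero$. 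Inserting the centered value $u_\edged = \tfrac12(u_\edge + u_{\edge'})$ and separating the dual faces to which $\bfe_i$ is normal (first case) from those to which it is tangent (second case) then gives two sums to be identified with $S_i$ and with $S_j$, $j\neq i$, respectively.

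For the normal faces each $\edged$ is contained in a single primal cell $K = [\overrightarrow{\edge\edge'}]$, so this part reorganizes immediately as a sum over $K$. The key computation is to rewrite the flux \eqref{eq:flux_eK}: using $\bfn_{K,\edge}\cdot\bfn_{D_\edge,\edged} = -1$, $\bfn_{K,\edge'}\cdot\bfn_{D_\edge,\edged}= 1$, the expression \eqref{eq:massflux}--\eqref{eq:edge_velo} of the primal fluxes, and the Cartesian identity $|D_{K,\edge}|/d(\bfx_\edge,\bfx_{\edge'}) = |\edge|/2$, one gets
\begin{equation*}
\fluxd = \frac{\vr_\edge^{up}\, u_\edge\, |D_{K,\edge}| + \vr_{\edge'}^{up}\, u_{\edge'}\, |D_{K,\edge'}|}{d(\bfx_\edge,\bfx_{\edge'})},
\end{equation*}
while the centered velocity is exactly $u_\edged = \tfrac12(u_\edge + u_{\edge'}) = (\Rr_\mesh^{(i)} u_i)_K$ by \eqref{eq:interpolate_primal}. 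This identifies the normal part with $S_i$.

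I expect the main obstacle to be the tangent case, which yields $S_j$ for $j \neq i$. Here a dual face $\edged$ is the union of the halves of two primal faces $\edgeperp \in \edges(K)$ and $\edgeperp' \in \edges(L)$, with flux \eqref{eq:flux_eorth}, and, crucially, each primal face $\edgeperp \in \edgesintj$ is split between two distinct dual faces of the $i$-mesh, so its flux $F_{\cdot,\edgeperp} = |\edgeperp|\,\vr_\edgeperp^{up} u_\edgeperp$ reappears twice. The plan is to reindex by the primal faces $\edgeperp$ rather than by the dual faces: I collect all terms reusing a given $F_{\cdot,\edgeperp}$, expand the centered velocities $u_\edged$ in terms of the four neighboring $i$-faces $\edge_1,\dots,\edge_4$ of $\edgeperp$, and regroup the $\varphi$-differences into the two difference quotients along $\bfe_j$ appearing in the bracket of $S_j$. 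Matching the geometric weights $|\edgeperp|$, the two halves and the centering coefficients produces the announced prefactor $|D_\edgeperp|/4$; the delicate points are respecting the orientation conventions $\bfx_{\edge_3}\bfx_{\edge_1} = \bfx_{\edge_4}\bfx_{\edge_2} = \beta\bfe_j$ and checking that every boundary contribution cancels. Adding the normal and tangent parts yields $\sum_{j=1}^3 S_j$ and completes the proof.
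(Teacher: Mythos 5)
Your proposal is correct and follows essentially the same route as the paper: the paper likewise splits the double sum according to whether the dual face is normal or tangent to $\bfe_i$, reindexes the normal part over primal cells $K=[\overrightarrow{\edge\edge'}]$ using $F_{K,\edge}=|\edge|\vr_\edge^{up}u_\edge$, $|\edge|/2=|D_{K,\edge}|/d(\bfx_\edge,\bfx_{\edge'})$ and $(u_\edge+u_{\edge'})/2=(\Rr_\mesh^{(i)}u_i)_K$, and reindexes the tangent part over the primal faces $\edgeperp\in\edgesintj$ before regrouping the four $\varphi$-values into the two difference quotients. The only cosmetic difference is that you perform the conservativity-based pairing (discrete integration by parts) explicitly as a first step, whereas the paper embeds it in the subsequent reordering of the sums.
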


\begin{proof} We write  $ \sum_{\edge \in \edgesinti} \sum_{\edged \in \edgesd(D_\edge)}
F_{\edge,\edged} u_{\edged} \varphi_{\edge}=    \sum_{j=1}^3 S_{j}$  with, using \eqref{eq:flux_eK}, \eqref{eq:flux_eorth} and the centred choice for $u_{\edged}$,
  \begin{align*}
&   S_i = \sum_{\edge \in \edgesinti} \sum_{\substack{\edged ={\edge | \edge'} \in\edgesdinti \\ \edged\perp\bfe_{i}, \edged \subset K}}
    \frac 1 2 \ \bigl[ F_{K,\edge}\ \bfn_{K,\edge} 
+ F_{K,\edge'}\ \bfn_{K,\edge'}  \bigr] \cdot \bfn_{D_\edge,\edged}
    \frac {u_{\edge} + u_{\edge'}}{2}\varphi_{\edge},
\\
&    S_j =\sum_{\edge \in \edgesinti}  \sum_{\substack{\edged= {\edge | \edge'} \in\edgesdinti\\ \edged \perp\bfe_{j}, \edged \subset \edgeperp\cup\edgeperp'}}  \frac 1 2\ \bigl[F_{K,\edgeperp}+ F_{L,\edgeperp'} \bigr]  \frac {u_{\edge} + u_{\edge'}}{2} \varphi_{\edge},
\textrm{ for } j\ne i,
 \end{align*}
  where $\edgeperp$ and $\edgeperp'$ are the faces of $\edgesj$ such that $\edged \subset \edgeperp \cup \edgeperp'$, $\tau \in \edgesK$, $\tau' \in \edgesL$ and $\edge = K|L$.
  
For $S_i$, a reordering of the summation and the fact that $(u_{\edge} + u_{\edge'})/2 = (\Rr_\mesh^{(i)}u_i)_K$  yield
  \[
    S_i  = \sum_{\substack{K =  \overrightarrow{[\edge \edge']}\\ \edge, \edge' \in \edgesi}}
     \frac 1 2 \ \bigl[ F_{K,\edge'}- F_{K,\edge} \bigr]  (\Rr_\mesh^{(i)}u_i)_K (\varphi_{\edge} - \varphi_{\edge'}).
  \]
Since $F_{K,\edge}=|\edge| \vr_\edge^{up} u_\edge$, this gives
  \[
    S_i  = \sum_{\substack{K =  \overrightarrow{[\edge \edge']}\\ \edge, \edge' \in \edgesi}}
    (\vr_\edge^{up} u_\edge |D_{ K,\edge}| + \vr_{\edge'}^{up} u_{\edge'} |D_{ K,\edge'}|)(\Rr_\mesh^{(i)}u_i)_K
    \frac {\varphi_{\edge} - \varphi_{\edge'}}{d(\bfx_\edge, \bfx_{\edge'})}.
  \]
For $S_j$, $j \ne i$, we have 
 \begin{equation*}
    S_{j} = 
    \sum_{\substack{\edgeperp \in \edgesintj}}   |\edgeperp | \frac{ \vr^{up}_\edgeperp u_{\edgeperp}}{4}   \big[-\left(   {u_{\edge_3}+u_{\edge_1}}\right) \varphi_{\edge_1} -  \left(   {u_{\edge_4}+u_{\edge_2}}\right) \varphi_{\edge_2} 
    + \left(   {u_{\edge_1}+u_{\edge_3}}\right) \varphi_{\edge_3}+ \left(   {u_{\edge_2}+u_{\edge_4}}\right) \varphi_{\edge_4}\big]
 \end{equation*}
    where $(\edge_k)_{k=1,\ldots,4}$ are the four neighbouring faces (or edges) of $\edgeperp$ belonging to $\edgesi$, \ie \ such that $\bar \edgeperp \cap \bar \edge_k \not = \emptyset$, see figure \ref{fig-tau}.
\begin{figure}[hbt]
\centering
\begin{tikzpicture} 
\draw[thin] (0.5,1.5)--(0.5,3.5);
\draw[thin] (3,1.5)--(3,3.5);
\draw[very thick] (0.5,2.5)--(3,2.5);

\path node at (1.75,2.2) {$\edgeperp$};
\path node at (0.5,2.9) [anchor= west] {$\edge_1$};
\path node at (3,2.9) [anchor= west]{{$\edge_2$}};
\path node at (0.5,2.1) [anchor= west] {$\edge_3$};
\path node at (3,2.1) [anchor= west]{{$\edge_4$}};
\end{tikzpicture}
\caption{Neighbouring faces of $\edgeperp$}
\label{fig-tau}
\end{figure}

Thus, 
\begin{align*}
    S_{j}  = \sum_{\substack{\edgeperp \in \edgesintj}} |\edgeperp | \frac{ \vr^{up}_\edgeperp u_{\edgeperp}}{4} \left[  \left(   {u_{\edge_3}+u_{\edge_1}}\right) (\varphi_{\edge_3} - \varphi_{\edge_1}) +  \left(   {u_{\edge_4}+u_{\edge_2}}\right) (\varphi_{\edge_4} - \varphi_{\edge_2})\right]
    \\
    = \sum_{\substack{\edgeperp \in \edgesintj}} |D_{\edgeperp} | \frac{ \vr^{up}_\edgeperp u_{\edgeperp}}{4} \left[  \left(   {u_{\edge_3}+u_{\edge_1}}\right) \frac {\varphi_{\edge_3} - \varphi_{\edge_1}}{d(\bfx_{\edge_1},\bfx_{\edge_3})} +  \left(   {u_{\edge_4}+u_{\edge_2}}\right) \frac {\varphi_{\edge_4} - \varphi_{\edge_2}}{d(\bfx_{\edge_2},\bfx_{\edge_4})}    \right]
\end{align*}
\end{proof}

With the uniform estimates stated in Proposition \ref{prop:estimates} and the material introduced above we are able to pass to the limit in the discrete equations ($\ref{dcont}$)--($\ref{dmom}$). 

\begin{prop}
Let $\eta >0$ and $(\disc_n= (\mesh_n,\E_n))_\nnn$ be a sequence of  MAC grids  with step size $h_{\mesh_n}$ tending to zero as $\nti$. Assume that $ \eta \le \eta_{\mesh_n}$ for all $\nnn$, where $\eta_{\mesh_n}$ is defined by ($\ref{regmesh}$).
Let $(\bfu_n)_\nnn$, $(p_n)_\nnn$ and $(\vr_n)_\nnn$  be the corresponding sequence of solutions to ($\ref{probdis}$).
Then, up to the extraction of a subsequence:
\begin{enumerate}
\item the sequence $(\bfu_n)_\nnn$  converges in $(L^q(\Omega))^3$ where $ q \in [1,6) $ to a function $\bfu\in(\xHone_0(\Omega))^3$ and $( \nabla_{\E_n} \bfu_n)_{\nnn} $ converges weakly in $L^2(\Omega)^{3 \times 3} $ to $\nabla \bfu$.
\item the sequence $(\vr_n)_\nnn$ weakly converges to a function $\vr$ in $L^{2\gamma}(\Omega)$,
\item the sequence $(p_n)_\nnn$ weakly converges to a function $p$ in $L^2(\Omega)$,
\item $\bfu$ and $\vr$  satisfy  the continuous mass balance  equation \eqref{contf}.
\item $\bfu$, $p$ and $\vr$ satisfy the continuous momentum balance equation \eqref{movf}.
\item $\vr \ge 0$ a.e. and $\int_\Omega \vr \dx = M$.
\end{enumerate}
\label{prop:conv1}\end{prop}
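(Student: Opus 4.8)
The plan is to adapt Step~2 of the proof of Theorem~\ref{continuityws} to the discrete setting, feeding the uniform bounds of Proposition~\ref{prop:estimates} into the compactness and consistency results of Section~\ref{disrot}. First I would establish the compactness statements (1)--(3) and the partial information (6). By Proposition~\ref{prop:estimates} the sequences $(\|\bfu_n\|_{1,\E_n,0})_\nnn$, $(\|p_n\|_{L^2(\Omega)})_\nnn$ and $(\|\vr_n\|_{L^{2\gamma}(\Omega)})_\nnn$ are bounded uniformly in $n$. The bound on $\bfu_n$ together with Theorem~\ref{th:compactness} and $\eta_{\mesh_n}\ge\eta>0$ yields, up to a subsequence, $\bfu_n\to\bfu$ in $L^q(\Omega)^3$ for every $q<6$, with $\bfu\in\xHone_0(\Omega)^3$ and $\nabla_{\widetilde\E_n}\bfu_n\rightharpoonup\nabla\bfu$ weakly in $L^2(\Omega)^{3\times3}$, which is (1). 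Weak compactness in the reflexive spaces $L^{2\gamma}(\Omega)$ and $L^2(\Omega)$ gives the weak limits $\vr$ and $p$ of (2) and (3). For (6), the sign $\vr\ge 0$ a.e. follows from $\vr_n>0$ (Theorem~\ref{thmexist}) by testing the weak convergence against $\characteristic_{\{\vr<0\}}$, and $\int_\Omega\vr\dx=M$ follows from the exact discrete mass conservation \eqref{masscons} tested against the constant function $1$.

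For the mass balance (4), I would multiply \eqref{dcont} by $\varphi(\bfx_K)$ for $\varphi\in C_c^\infty(\Omega)$, sum over $K\in\mesh_n$, and reorganize by faces using the conservativity $F_{K,\edge}=-F_{L,\edge}$, so that the flux contribution becomes $\sum_{\edge=K|L}F_{K,\edge}\bigl(\varphi(\bfx_K)-\varphi(\bfx_L)\bigr)$ while the stabilization contributes $\cstab h_{\mesh_n}^\alpha\sum_K|K|\varphi(\bfx_K)(\vr_K-\vr^\star)$. The stabilization term is $O(h_{\mesh_n}^\alpha)$ and vanishes since the discrete mass is bounded and $\alpha>1$. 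In the flux term I would replace $\varphi(\bfx_K)-\varphi(\bfx_L)$ by $-\nabla\varphi\cdot(\bfx_L-\bfx_K)$ up to an $O(h_{\mesh_n})$ consistency error and replace the upwind density $\vr_\edge^{\upw}$ by a centered value, the error of the latter substitution being controlled by the weak BV estimate \eqref{bvweak} and the $L^6$ bound on $\bfu_n$; the resulting centered expression is a Riemann-type sum for $-\int_\Omega\vr_n\bfu_n\cdot\nabla\varphi\dx$. Since $2\gamma>6$ the density is bounded in $L^{6/5}(\Omega)$, hence $\vr_n\bfu_n\rightharpoonup\vr\bfu$ weakly in $L^1(\Omega)$, and passing to the limit yields \eqref{contf}.

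For the momentum balance (5) I would take $\bfv=\mathcal P_{\edges_n}\bvarphi$ (the interpolate \eqref{interpedges}, or the Fortin interpolate of Lemma~\ref{lem:fortin}) in the weak form \eqref{eq:weakmom} for a fixed $\bvarphi\in C_c^\infty(\Omega)^3$, and pass to the limit term by term. The four ``linear'' terms are handled by pairing a weak limit with a strongly converging interpolate: $\nabla_{\widetilde\E_n}\bfu_n\rightharpoonup\nabla\bfu$ against $\nabla_{\widetilde\E_n}(\mathcal P_{\edges_n}\bvarphi)\to\nabla\bvarphi$ for the diffusion term; $\dive_{\mesh_n}\bfu_n\rightharpoonup\dv\bfu$ (the discrete divergence being, on Cartesian grids, the trace of the weakly converging $\nabla_{\widetilde\E_n}\bfu_n$) against $\dive_{\mesh_n}(\mathcal P_{\edges_n}\bvarphi)\to\dv\bvarphi$ for the grad--div term; $p_n\rightharpoonup p$ against the same strongly converging divergence for the pressure term; and the consistency $\mathcal P_{\edges_n}\bff\to\bff$ in $L^2(\Omega)^3$ for the forcing term.

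The hard part is the convective term $\int_\Omega\dv_{\widetilde\E_n}(\vr_n\bfu_n\otimes\bfu_n)\cdot\mathcal P_{\edges_n}\bvarphi\dx$, which is precisely the reason for introducing the reconstruction operators $\mathcal R$ and the algebraic identity of Lemma~\ref{lem-alge2}. Applying that lemma componentwise with the discrete scalar test function $\mathcal P^{(i)}_{\edges_n}\varphi_i$, I would rewrite this term as a sum $\sum_j S_j$ of products of upwind densities, face velocities, reconstructed velocities $(\Rr_{\mesh_n}^{(i)}u_i)_K$ and $\mathcal R_{\edges_n}^{(i,j)}u_i$, and discrete derivatives of $\varphi_i$. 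I would then replace each upwind density by the corresponding cell density, the error being absorbed by the weak BV estimate \eqref{rho_jump}, and replace the discrete derivatives of $\varphi_i$ by $\partial_j\varphi_i$ up to consistency errors. Combining the strong convergence of $\bfu_n$ and of its reconstructions (Lemma~\ref{lem:conv-uchapeau}) with the weak convergence of $\vr_n$, each $S_j$ converges and the limit reassembles into $-\int_\Omega\vr\bfu\otimes\bfu:\nabla\bvarphi\dx$. Collecting all limits gives \eqref{movf}. I expect this convective term---specifically the simultaneous control of the weakly converging density, the strongly converging velocity products, and the upwind/consistency errors---to be the principal technical obstacle.
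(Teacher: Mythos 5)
Your proposal is correct and follows essentially the same route as the paper: compactness of $(\bfu_n,p_n,\vr_n)$ from Proposition~\ref{prop:estimates} and Theorem~\ref{th:compactness}, the Fortin interpolate as test function in \eqref{eq:weakmom}, weak--strong pairings for the linear terms, and the decomposition of the convective term via Lemma~\ref{lem-alge2} with the upwind-to-centered replacement controlled by the weak BV estimate \eqref{bvweak} and the reconstruction operators of Lemmas~\ref{lem:uchapeau}--\ref{lem:conv-uchapeau}. The only difference is that you sketch the standard consistency argument for the mass balance, whereas the paper simply cites \cite{eymard2010convergence} for that point.
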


\begin{proof}
The stated convergences (\ie\ points {\it (1)} to {\it (3)}) are straightforward consequences of the uniform bounds for the sequence of solutions, together, for the velocity, with the compactness theorem \ref{th:compactness} and the Sobolev inequalities stated in Theorem \ref{sobolev}.
Point \textit{(6)} is an easy consequence of point \textit{(2)}. We refer the reader to \cite{eymard2010convergence} for the proof of point \textit{(4)}. Let us then prove point {\it (5)} \textit{i.e.}  that $\bfu$,  $p$ and $\vr$ satisfy  \eqref{movf}.
Let $\bfvphi=(\varphi_1,\varphi_2,\varphi_3)$ be a function of $\xC^\infty_c(\Omega)^3$. 
Taking  $\widetilde{\mathcal P}_{\edges_n} \bfvphi   \in \Hmeshnzero$ as a test function in ($\ref{eq:weakmom}$), we infer:
\begin{multline*}
\int_\Omega \dv_{\widetilde\E_n} ( \vr_n \bfu_n \otimes \bfu_n) \cdot \widetilde{\mathcal P}_{\edges_n}\bfvphi   \dx  + \mu \int_\Omega \nabla_{\E_n} \bfu_n :  \nabla_{\E_n}\widetilde{\mathcal P}_{\edges_n}\bfvphi  \dx \\ + (\mu+\lambda) \int_\Omega \dv_{\mesh_n} \bfu_n \dv_{\mesh_n} (\widetilde{\mathcal P}_{\edges_n}\bfvphi  \dx 
-\int_\Omega p_n \dv_{\mesh_n} \widetilde{\mathcal P}_{\edges_n} \bfvphi) \dx = \int_\Omega \mathcal{P}_{\edges_n}\bm{f} \cdot \widetilde{\mathcal P}_{\edges_n}\bfvphi  \dx.
\end{multline*}

The convergence of the diffusive term may be proven by slight modifications of a classical result \cite[Chapter III]{eymard2000finite}:
\[
\lim_\nti
\int_\Omega   \nabla_{\E_n} \bfu_n : \nabla_{\E_n} (\widetilde{\mathcal P}_{\edges_n}\bfvphi)\dx = \int_\Omega \nabla \bfu : \nabla \bfvphi \dx.
\]
By definition of $\widetilde{\mathcal P}_{\edges_n} \bfvphi$  and thanks to Lemma \ref{lem:fortin} we have:
\[
 \int_\Omega  p_n \dv_{\mesh_n}  (\widetilde{\mathcal P}_{\edges_n}\bfvphi)  \dx =
\int_\Omega p_n\ \dive \bfvphi \dx,
\]
and therefore,   thanks to the $L^2$ weak convergence of the pressure,
\[ \lim_\nti \int_\Omega p_n \dv_{\mesh_n} (\widetilde{\mathcal P}_{\edges_n}\bfvphi) \dx = \int_\Omega p\ \dive \bfvphi \dx.
\]
By virtue of the $L^2$ weak convergence of $ \dv_{\mesh_n} \bfu_n$ , we also have:
\begin{equation*}
\lim_\nti \int_\Omega \dv_{\mesh_n} \bfu_n \dv_{\mesh_n}  (\widetilde{\mathcal P}_{\edges_n}\bfvphi) \dx = \int_\Omega \dv \bfu \ \dive \bfvphi \dx.
\end{equation*}

From ($\ref{strongcvfortin}$) and the strong convergence of $ \P_{\E_n} \bm{f} $ towards $\bm{f}$,  we infer that \begin{equation*} \lim_\nti \int_\Omega \mathcal{P}_{\edges_n}\bm{f}  \cdot   \widetilde{\mathcal P}_{\edges_n}\bfvphi \dx= \int_\Omega \bff \cdot \bfvphi \dx. 
\end{equation*}

Now it remains to treat the convective term. Here again  the dependency of the mesh on $n$ will be omitted for short.
 First of all we have
\begin{equation*}
\int_\Omega \dv_{\widetilde\E_n} ( \vr_n \bfu_n \otimes \bfu_n) \cdot  \widetilde{\mathcal P}_{\edges_n}\bfvphi \dx = \sum_{i=1}^3 \sum_{\edge \in \edgesinti} \sum_{\edged \in \edgesd(D_\edge)}
F_{\edge,\edged} u_{\edged}  (\widetilde{\mathcal P}^{(i)}_{\edges_n} \varphi_i)_\edge.
\end{equation*}
Let $1 \le i \le 3 $. Using Lemma \ref{lem-alge2}, we can write, setting $(\widetilde{\mathcal P}^{(i)}_{\edges_n} \varphi_i)_\edge=\psi_\edge$
and using the notations of Lemma \ref{lem-alge2},
\begin{equation*}
 \sum_{\edge \in \edgesinti} \sum_{\edged \in \edgesd(D_\edge)}
F_{\edge,\edged} u_{\edged}  (\widetilde{\mathcal P}^{(i)}_{\edges_n} \varphi_i)_\edge = \sum_{j=1}^3 S_{j},
\end{equation*} 
where
\[
    S_i  = \sum_{\substack{K =  \overrightarrow{[\edge \edge']}\\ \edge, \edge' \in \edgesi}}
    (\vr_\edge^{up} u_\edge |D_{ K,\edge}| + \vr_{\edge'}^{up} u_{\edge'} |D_{ K,\edge'}|)(\Rr_\mesh^{(i)}u_i)_K
    \frac {\psi_{\edge} - \psi_{\edge'}}{d(\bfx_\edge, \bfx_{\edge'})},
\]
and, for $j \ne i$ (see Figure \ref{fig-tau} for the definition of $\sigma_k$, $k=1,\ldots,4$),
\[
    S_{j} = \sum_{\substack{\edgeperp \in \edgesintj}} |D_{\edgeperp} | \frac{ \vr^{up}_\edgeperp u_{\edgeperp}}{4} \left[  \left(   {u_{\edge_3}+u_{\edge_1}}\right) \frac {\varphi_{\edge_3} - \varphi_{\edge_1}}{d(\bfx_{\edge_1},\bfx_{\edge_3})} +  \left(   {u_{\edge_4}+u_{\edge_2}}\right) \frac {\varphi_{\edge_4} - \varphi_{\edge_2}}{d(\bfx_{\edge_2},\bfx_{\edge_4})}    \right].
\]
Replacing, in $S_i$, $\vr^{up}_\edge$ by $\vr_K$,  the term $S_i$ can be written as $S_i=\bar S_i+ R_i$ with
\[
\bar S_i  = \sum_{\substack{K =  \overrightarrow{[\edge \edge']}\\ \edge, \edge' \in \edgesi}}
    (\vr_K u_\edge |D_{ K,\edge}| + \vr_K u_{\edge'} |D_{ K,\edge'}|)(\Rr_\mesh^{(i)}u_i)_K
    \frac {\psi_{\edge} - \psi_{\edge'}}{d(\bfx_\edge, \bfx_{\edge'})}.
\]
Thanks to the weak convergence of $\vr$ in $L^2(\Omega)$, the convergence of $\bfu$ in $L^4(\Omega)^3$, Lemma \ref{lem:conv-uchapeau} and the uniform
convergence of the term $ \frac {\psi_{\edge} - \psi_{\edge'}}{d(\bfx_\edge, \bfx_{\edge'})}$ to $-\partial_i \vphi_i$, we obtain
\[
\lim_\nti \bar S_i = -\int_\Omega \vr u_i u_i \partial_i \vphi_i \dx.
\]
Furthermore, using H\"older's inequality and Inequality \eqref{bvweak}, one has $| R_i| \le C \sqrt{h_{\mesh_n}}$ and then 
\[
\lim_\nti S_i = -\int_\Omega \vr u_i u_i \partial_i \vphi_i \dx.
\]
For $j \ne i$ we can write $S_j=\bar S_j + R_j$ with
\begin{multline*}
\bar S_j=-\sum_{\substack{\edgeperp \in \edgesintj}} |D_{\edgeperp} | \frac{ \vr^{up}_\edgeperp u_{\edgeperp}}{4} \left[  \left(   {u_{\edge_3}+u_{\edge_1}}\right) \partial_j \vphi_i(\bfx_\edgeperp) +  \left(   {u_{\edge_4}+u_{\edge_2}}\right) \partial_j \vphi_i(\bfx_\edgeperp)  \right]
\\=
-\sum_{\substack{\edgeperp \in \edgesintj}} |D_{\edgeperp} |  \vr^{up}_\edgeperp u_{\edgeperp}  (\Rr_{\E_n}^{(i,j)} {u_i})_{\edgeperp} \partial_j \vphi_i(\bfx_\edgeperp),
\end{multline*}
and $|R_j| \le C h_{\mesh_n}$ thanks to the $L^2$-bound for $\vr$, the $L^4$-bound for $\bfu$, Lemma \ref{lem:uchapeau} and the
regularity of $\vphi_i$.

Now, as for $S_i$, we replace $\vr^{up}_\edgeperp$ by $\vr_K$ or $\vr_L$ (for $\edgeperp=K|L$),  the term $\bar S_j$ can be written as $\bar S_j=\tilde S_j+ \tilde R_j$ with
\[
\tilde S_j=-
\sum_{\substack{\edgeperp \in \edgesintj}} (|D_{K,\edgeperp} | \vr_K +  |D_{L,\edgeperp}|\vr_L)u_{\edgeperp} (\Rr_{\E_n}^{(i,j)} {u_i})_{\edgeperp} \partial_j \vphi_i(\bfx_\edgeperp).
\]
As for $\bar S_i$ (weak convergence $\vr$ in $L^2(\Omega)$, convergence of $\bfu$ in $L^4(\Omega)^3$, Lemma \ref{lem:conv-uchapeau} and regularity of $\vphi_i$), we obtain
\[
\lim_\nti \tilde S_j= -\int_\Omega \vr u_i u_j \partial_j \vphi_i \dx.
\]
Furthermore, using H\"older's inequality and Inequality \eqref{bvweak}, one has $| \tilde R_j| \le C \sqrt{h_{\mesh_n}}$ and then 
\[
\lim_\nti S_j = -\int_\Omega \vr u_i u_j \partial_j \vphi_i \dx.
\]
Summing the limit of $S_j$ for $j=1,2,3$, we obtain
\[
\lim_\nti \sum_{\edge \in \edgesinti} \sum_{\edged \in \edgesd(D_\edge)}
F_{\edge,\edged} u_{\edged}  (\widetilde{\mathcal P}^{(i)}_{\edges_n} \varphi_i)_\edge=
-\int_\Omega u_i \vr \bfu \cdot \grad \vphi_i \dx.
\]
Now, 
summing for $i \in \{1,2,3\}$ we obtain
\begin{equation*}
\int_\Omega \dv_{\widetilde\E_n} (\vr_n \bfu_n \otimes \bfu_n) \cdot \bvarphi \dx \to - \int_\Omega \vr \bfu \otimes \bfu : \nabla \bm{\varphi} \dx \ \text{as} \ \nti.
\end{equation*}
Finally $\bfu,p,\vr $ satisfy point {\it (5)} and the proof of Proposition \ref{prop:conv1} is complete.
\end{proof}

\subsection{Passing to the limit in the equation of the state}

The goal of this part is to pass to the limit in the nonlinear equation ($\ref{deos}$). As in the continuous case, the main idea is to prove the a.e. convergence of $\vr_n $ towards $\vr$ (up to a subsequence).

\subsubsection{The effective viscous flux}

To overtake this difficulty in the continuous case we have proved that the sequence of approximate solution satisfy ($\ref{palnl}$). 
The following proposition is nothing else than the discrete version of this identity.

\begin{prop}[Convergence of the effective viscous flux]
Under the assumptions of Proposition \ref{prop:conv1}
we have  for all $\varphi \in \xC^\infty_c(\Omega)$,
\begin{equation}\label{fevd}
\lim_{\nti} \int_\Omega (p_n -(\lambda+2\mu) \divedn\,  \bfu_n) \vr_n \varphi \dx = \int_\Omega (p-(\lambda+2\mu) \dive\,  \bfu) \vr \varphi \dx,
\end{equation}
passing to subsequences if necessary.
\label{peff} \end{prop}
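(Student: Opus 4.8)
The plan is to transpose Step~3 of the proof of Theorem~\ref{continuityws} to the discrete setting, using the exact discrete identity \eqref{gdcd} of Lemma~\ref{ggddccd} in place of \eqref{graddivrot} and the discrete potential field of Lemma~\ref{divcurl} in place of the Bogovski field. Throughout, the mesh index $n$ on the discrete operators $\dived$, $\curld$, $\nabla_{\widetilde\E}$, $\Delta_\mesh$, $\gradtg_\E$ and $\mathcal P_\edges$ is suppressed for readability. For each $n$ I would apply Lemma~\ref{divcurl} to $\vr_n$: let $w_n$ be the solution in $L_{\mesh_n}$ of $-\Delta_\mesh w_n=\vr_n$ and set $\bfv_n=-\gradtg_\E w_n$, so that $\dived\bfv_n=\vr_n$ and $\curld\bfv_n=0$. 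As $\vr_n$ is bounded in $L^2(\Omega)$ by \eqref{ienergie} and the discrete inverse Laplacian is stable (see \cite{eymard2010convergence}), $w_n$ converges, up to a subsequence, to the solution $w$ of the continuous problem $-\Delta w=\vr$ with homogeneous Dirichlet conditions; hence $\bfv_n\to\bfv:=-\nabla w$ in $L^2(\Omega)^3$, with $\dive\bfv=\vr$ and $\curl\bfv=0$.

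I would then test the weak discrete momentum balance \eqref{eq:weakmom} against $\bfw_n:=\gradtg_\E(w_n\,\varphi_\mesh)$, which lies in $\Hmeshnzero$ for $h_{\mesh_n}$ small (as $\varphi$ has compact support) and whose discrete $\xHone_0$ norm is bounded by $C_\varphi\,\norm{\vr_n}{L^2(\Omega)}$ via Lemma~\ref{locest}; Theorem~\ref{th:compactness} then yields, up to a subsequence, $\bfw_n\to\bfw:=\nabla(w\varphi)$ in $L^2(\Omega)^3$ and $\nabla_{\widetilde\E}\bfw_n\rightharpoonup\nabla\bfw$ weakly in $L^2$. Splitting the diffusion term by \eqref{gdcd} and grouping it with the $(\mu+\lambda)$ divergence term, and expanding $\dived\bfw_n$ and $\curld\bfw_n$ by discrete Leibniz rules — which isolate the factor $\varphi\,\vr_n$, thanks to $\dived\bfv_n=\vr_n$ and $\curld\bfv_n=0$, plus cross-terms carrying $\gradtg_\E\varphi_\mesh$ and consistency remainders of size $o(1)$ — I would reproduce the continuous chain \eqref{c-weakmom}--\eqref{cpc}. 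The principal term $\int_\Omega\bigl((\lambda+2\mu)\dived\bfu_n-p_n\bigr)\vr_n\varphi\dx$ then emerges, while the remaining contributions — the curl term $\mu\int_\Omega\curld\bfu_n\cdot L(\varphi)\bfv_n\dx$, the $\grad\varphi$ cross-terms pairing $p_n-(\lambda+2\mu)\dived\bfu_n$ with $\bfv_n$, and the force term $\int_\Omega\mathcal P_\edges\bff\cdot\bfw_n\dx$ — each pair a weakly $L^2$-convergent factor with the strongly convergent $\bfv_n$ or $\bfw_n$ and hence pass to the limit directly. Writing the limit momentum equation \eqref{movf} for $(\bfu,p,\vr)$ tested against $\bfw$, rewriting it by \eqref{graddivrot} and using $\dive\bfv=\vr$, $\curl\bfv=0$, and comparing with the limit of the discrete identity, one isolates \eqref{fevd} — \emph{provided} the convective contribution converges.

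That convective convergence is the main obstacle, precisely the analogue of \eqref{ass:kk}. One cannot simply reuse point~(5) of Proposition~\ref{prop:conv1}, where the test function was the interpolate of a \emph{fixed smooth} field with uniformly convergent difference quotients: here $\bfw_n$ is mesh-dependent and $\nabla_{\widetilde\E}\bfw_n$ converges only weakly, so applying Lemma~\ref{lem-alge2} directly would produce an indeterminate weak-times-weak product. The remedy mirrors the continuous rewriting \eqref{conv-rewrite}: using the diamond-cell mass balance \eqref{eq:mass_D_imp}, whose defect is $O(C_s h_{\mesh_n}^\alpha)\to0$, I would transfer the discrete derivative onto $\bfu_n$, producing a discrete form of $-(\vr_n\bfu_n\cdot\grad)\bfu_n$ tested against $\bfw_n$. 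This field is bounded in $L^r(\Omega)^3$ with $\tfrac1r=\tfrac12+\tfrac16+\tfrac1{2\gamma}$, hence in some $L^r$ with $r>\tfrac65$ since $\gamma>3$, so it converges weakly while $\bfw_n\to\bfw$ strongly in $L^{r'}$; the weak-times-strong pairing then converges, and its limit is identified with $\int_\Omega\vr\,\bfu\otimes\bfu:\nabla\bfw\dx$ by testing against a fixed velocity field and invoking the limit mass balance \eqref{contf}, exactly as $G$ is identified in the continuous argument. The replacement of the upwind densities $\vr_\edge^{\upw}$ by cell values along the way generates remainders that H\"older's inequality and the weak-BV estimate \eqref{bvweak} bound by $C\sqrt{h_{\mesh_n}}\to0$; controlling these discrete corrections is the technical heart of the proof, after which \eqref{fevd} follows from the comparison above.
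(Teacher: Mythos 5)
Your proposal follows essentially the same route as the paper's proof: the discrete potential field from Lemma~\ref{divcurl}, boundedness via Lemma~\ref{locest} and compactness via Theorem~\ref{th:compactness}, the discrete div--curl identity \eqref{gdcd}, the transfer of the discrete derivative in the convective term via the dual mass balance \eqref{eq:mass_D_imp}, the $L^r$ bound with $r>6/5$ exploiting $\gamma>3$, and the identification of the weak limit by testing against fixed smooth fields and invoking the convective limit of Proposition~\ref{prop:conv1} all coincide with the paper's argument. The only (inessential) difference is that the paper tests with the componentwise product $\bV_n=(v_{n,i}\,\varphi^{(i)}_{\E_n})_i$ rather than your $\gradtg_{\E_n}(w_n\varphi_{\mesh_n})$, which is why the cross-term $\mu\int_\Omega \curl_{\mesh_n}\bfu_n\cdot L(\varphi)\overline{\bfv}_n\dx$ survives there; with your exact discrete gradient that term would vanish identically by Lemma~\ref{divcurl} (at the price of slightly different, but still strongly convergent, cross-terms in the divergence expansion), so your listing it is a harmless bookkeeping slip rather than a gap.
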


\begin{proof}

The following proof can be seen as a discrete version of Step 3 of the proof of Theorem \ref{continuityws}.

Let $\varphi \in \xC^\infty_c(\Omega)$.
For a MAC grid $\mesh$, we define $\varphi_\mesh \in L_\mesh$,  $\varphi^{(i)}_{\E} \in H_{\E,0}^{(i)}$ by:
\begin{equation*}
\left\{
\begin{array}{l}
 \varphi_\mesh (\bfx) = \varphi (\bfx_K), \forall \bfx \in K, \ \forall K \in \mesh, \\ \\
  \varphi_{\E}^{(i)} (\bfx) = \varphi (\bfx_\edge), \forall \bfx \in D_\edge, \ \forall \edge \in \E^{(i)}.
\end{array}
\right.
\end{equation*}

We define $w_n$ with \eqref{fvle} (with $\mesh_n$ and $\vr_n$ instead of $\mesh$ and $\vr$) and $\bfv_n$ with $ \bv_n = -\gradtg_{\E_n} w_n $.
We set $ \bV_n =(V_{n,1},V_{n,2},V_{n,3})= ( v_{n,1} \varphi_{\E_n}^{(1)},v_{n,2} \varphi_{\E_n}^{(2)} ,v_{n,3} \varphi_{\E_n}^{(3)}) $.

Thanks to Lemma~\ref{locest}, since $\vr_n$ is bounded in $L^2(\Omega)$, the compactness theorem~\ref{th:compactness} gives that, up to a subsequence, as $n \tends \infty$, $\bfv_n$ converges to some $\bfv=(v_1,v_2,v_3)$ in $L^2_{loc}(\Omega)^3$ and that $\bfv \in \xHone_{loc}(\Omega)^3$. As a consequence, using  Theorem \ref{sobolev}, the sequence $(\bV_n)_{\nnn} $ converges to $ \varphi \bv $ in $ L^q(\Omega)^3 $ for any $ q \in [1,6)$.
As a consequence of the compactness theorem~\ref{th:compactness} we also have that $\dived \, \bfu_n$ and $\curld   \bfu_n$ converge weakly in $L^2(\Omega)$ towards $\dive \bfu$ and $\curl \bfu$.

Since $ \bV_n  \in \Hmeshnzero$, it is possible to take $\bV_n$ in \eqref{eq:weakmom} and write, using Lemma \ref{ggddccd}, 
\begin{multline}
\int_\Omega \dv_{\widetilde\E_n}(\vr_n \bfu_n \otimes \bfu_n) \cdot \bV_n \dx + (\lambda+2\mu) \int_\Omega \dv_{\mesh_n}  \bfu_n \, \dv_{\mesh_n}   \bV_n \dx
\\+\mu \int_\Omega \curl_{\mesh_n}  \bfu_n  \cdot \curl_{\mesh_n}  \bV_n \dx
 - \int_\Omega  p_n \dv_{\mesh_n}   \bV_n \dx
= \int _\Omega {\mathcal{P}}_{\E_n} \bm{f} \cdot  \bV_n \dx.
\label{lim} \end{multline}
where we have used formula ($\ref{gdcd}$).
We now mimick the proof given in the continuous case for the proof of \eqref{palnlff}. 
Since $\dv_{\mesh_n}  \bfv_n=\vr_n$, we first remark that:
\begin{equation}
\int_\Omega  \dv_{\mesh_n}  \bfu_n \, \dv_{\mesh_n}   \bV_n \dx = 
\\
\int_\Omega  (\dv_{\mesh_n} \bfu_n)  \vr_n \varphi \dx 
+ \int_\Omega ( \dv_{\mesh_n} \bfu_n) \bfv_n \cdot \gradi \varphi \dx +R_{1,n},
\label{limd}
\end{equation}
where $\lim_\nti R_{1,n} =0$, thanks to the discrete $\xHone(\Omega)$-estimate \eqref{estimup} on $\bfu_n$ and the $L^2_{loc}(\Omega)$ estimate of Lemma \ref{locest} on $\bfv_n$.
Replacing $\dv_{\mesh_n}\bfu_n$ by $p_n$, the same computation gives:
\begin{equation}
\int_\Omega p_n \,  \dv_{\mesh_n}  \bV_n \dx = \int_\Omega  p_n  \vr_n \varphi \dx 
+ \int_\Omega p_n \bfv_n \cdot \gradi \varphi \dx +R_{2,n},
\label{limp}\end{equation}
where $\lim_\nti R_{2,n} =0$.
In accordance with \cite{eymard2010convergence},  the second term of \eqref{lim} can be transformed as follows:
\begin{equation}
\begin{array}{l} \displaystyle
\int_\Omega  \curl_{\mesh_n} \bfu_n \cdot  \curl_{\mesh_n}  \bV_n \dx =
\int_\Omega \curl_{\mesh_n} \bfu_n \cdot \curl_{\mesh_n} \bfv_n \;  \varphi \dx 
\\ \displaystyle \hspace{20ex}
+ \int_\Omega \curl_{\mesh_n}  \bfu_n \cdot L(\varphi) {\overline{\bfv}_n} \dx +R_{3,n},
\end{array}
\label{limt}
\end{equation}
where $\lim_\nti R_{3,n} =0$ (for the same reasons as $R_{1,n}$), the matrix $L(\varphi)$ is the same as in the proof of \eqref{palnlff} and involves the first order derivatives of $\varphi$,  and $\overline{\bfv}_n$  satisfies:
\begin{equation}\label{convloc}
\overline{\bfv}_n \to \bfv~ \text{in}~ L^2_{\text{loc}}(\Omega)^3~\text{as}~ n \to +\infty.
\end{equation}
We refer the interested reader to \cite{eymard2010convergence} for an explicit expression of $\overline{\bfv}_n$ and for a proof of ($\ref{convloc}$).

Since $\curl_{\mesh_n}  \bfv_n=0$, \eqref{limt} leads to:
\begin{equation}
\int_\Omega  \curl_{\mesh_n}  \bfu_n \cdot  \curl_{\mesh_n}  \bV_n \dx =
 \int_\Omega \curl_{\mesh_n} \bfu_n \cdot L(\varphi) {\overline{\bfv}_n} \dx +R_{3,n}.
\label{limc}\end{equation}
Let us turn our attention to the convective term. For the readability, the dependency of some terms with respect to $n$ will be omitted when there are indices
related to the mesh (such as $\edge$, $\edged$, $\edgeperp$).

One has
\[
 \int_\Omega \dv_{\widetilde\E_n}(\vr_n \bfu_n \otimes \bfu_n) \cdot \bV_n \dx = \sum_{i=1}^3 \sum_{\edge \in \edgesinti} \sum_{\edged \in \edgesd(D_\edge)}
F_{\edge,\edged} u_{\edged} V_{\edge},
\]
where $V_{\edge}$ is the value of $V_{n,i}$ in $D_\edge$.
Let $i \in \{1,2,3\}$.
Setting $Q_n=\sum_{\edge \in \edgesinti} Q_\edge \characteristic_{D_\edge}$ with $Q_\edge=(1/|D_\edge|)\sum_{\edged \in \edgesd(D_\edge)}
F_{\edge,\edged} u_{\edged}$, one has 
\begin{equation}
\sum_{\edge \in \edgesinti} \sum_{\edged \in \edgesd(D_\edge)}
F_{\edge,\edged} u_{\edged} V_{\edge}= \int_\Omega Q_n V_{n,i} \dx. \label{Qn}
\end{equation}
We recall that $V_{n,i} \to \vphi v_i$  in $L^q(\Omega)$ for $q<6$ (as $\nti$).
In a first step, we prove that the sequence $(Q_n)_\nnn$ is bounded in $L^p(\Omega)$ for some $p >6/5$
(indeed we will have $p$ such that $1/p= 1/(2\g) + 1/2 + 1/6$ and then $p > 6/5$ since $\g>3$).
Then, up to subsequence, $Q_n \to Q$ weakly in $L^p(\Omega)$. In a second step we identify $Q$, proving that
$Q=\vr \sum_{j=1}^3 u_j \partial_j u_i$.

{\bf - Estimate on $Q_n$. }
For $\edge \in \edgesinti$, we use \eqref{eq:mass_D_imp}. 
It gives
\begin{equation}
Q_\edge=\frac{1}{|D_\edge|}\sum_{\edged \in \edgesd(D_\edge)}
F_{\edge,\edged} (u_{\edged}-u_\edge) - \cstab h_\mesh^\alpha( \vr_{\Ds} - \vr^\star)u_\edge.
\label{cucu}
\end{equation}
Let $\edged \in \edgesd(D_\edge)$ such that $\edged = \edge | \edge' \in\edgesdinti$
\begin{itemize}
\item If $\edged\perp\bfe_{i}$, $\edged \subset K$, then
\[
| F_{\edge,\edged}| \le \frac 1 2 (|F_{K,\edge}|+|F_{K,\edge'}| )\le 
\frac 1 2 (|\edge| \vr_{\edge}^{up} |u_{\edge}| + |\edge' | \vr_{\edge'}^{up} |u_{\edge'}|).
\]
\item If   $\edged \perp\bfe_{j}$, $j \ne i$, $\edged \subset \edgeperp\cup\edgeperp'$,
 where $\edgeperp$ and $\edgeperp'$ are the faces of $\edgesj$ such that $\edged \subset \edgeperp \cup \edgeperp'$, $\tau \in \edgesK$, $\tau' \in \edgesL$, $\edge=K|L$, then,
 \[
| F_{\edge,\edged}| \le \frac 1 2 (|F_{K,\edgeperp}|+ |F_{L,\edgeperp'|})\le 
\frac 1 2 (|\edgeperp| \vr_{\edgeperp}^{up} |u_{\edgeperp}| + |\edgeperp' | \vr_{\edgeperp'}^{up} |u_{\edgeperp'}|).
\]
\end{itemize}
Using the estimates on $\vr$ in $L^{2\g}(\Omega)$, $\bfu$ in $L^6(\Omega)$, $\grad_{\widetilde\E} u_i$ in $L^2(\Omega)$ and the fact that $\eta_n \ge \eta$ for all $n$, the part of $Q$ given by the first term of \refe{cucu} is bounded in $L^p(\Omega)$ with $p$ such that $1/p= 1/(2\g) + 1/2 + 1/6$.
The part of $Q$ given by the second term of \refe{cucu} tends to $0$ in $L^{3/2}(\Omega)$ for instance (since $\vr$ is bounded in $L^{2}(\Omega)$ and $\bfu$ in $L^6(\Omega)$) and then also in $L^p(\Omega)$.
Thus, up to a subsequence, we can assume that $Q_n \to Q$ weakly in $L^p(\Omega)$ and this gives
\begin{equation}
\lim_{\nti}\int_\Omega Q_n V_{n,i}\dx = \int_\Omega Q \vphi v_i \dx.
\end{equation}

{\bf  - Identification of  $Q$. }
Let $\bar \vphi \in C_c^\infty(\Omega)$. 
For $\edge \in \edgesinti$, let $\bar \vphi_\edge=(\widetilde{\mathcal P}^{(i)}_{\edges_n} \bar \varphi)_\edge$.
Then,  for $h_n$ small enough,
\[
\int_\Omega Q_n \bar \vphi \dx = \sum_{\edge \in \edgesinti} \sum_{\edged \in \edgesd(D_\edge)}
F_{\edge,\edged} u_{\edged}\bar \vphi_{\edge}.
\]
We already passed to the limit on this term in Proposition \ref{prop:conv1}:
\[
\lim_\nti \sum_{\edge \in \edgesinti} \sum_{\edged \in \edgesd(D_\edge)}
F_{\edge,\edged} u_{\edged}  (\widetilde{\mathcal P}^{(i)}_{\edges_n} \bar\varphi)_\edge=
-\int_\Omega u_i \vr \bfu \cdot \grad \bar \vphi \dx.
\]
Then $\int_\Omega Q \bar \vphi \dx = -\int_\Omega u_i \vr \bfu \cdot \grad \bar \vphi \dx$.
Since we already know that $\div ( \vr \bfu)=0$ we obtain (using $u_i \in H^1(\Omega)$ and $\vr \bfu \in L^2(\Omega)^3$)
\[
Q = \sum_{j=1}^3 \vr u_j \partial_j u_i.
\]
Finally, we have the limit of the convection term:
\begin{equation}
\lim_\nti \int_\Omega \dv_{\widetilde\E_n}(\vr_n \bfu_n \otimes \bfu_n) \cdot \bV_n \dx = \int_\Omega \sum_{i=1}^3
 \sum_{j=1}^3 \vr u_j (\partial_j u_i) \vphi v_i \dx.
\label{weaklimitfev}
\end{equation}
We recall now that $(\bV_n)_{\nnn} $ converges to $ \varphi \bv $ in $ L^q(\Omega)^3 $ for any $ q \in [1,6)$ and that $\dive_{\mesh_n} \bfu_n$, $p_n$ and  $\curl_{\mesh_n}\bfu_n$ weakly converge respectively in $L^2(\Omega)$ and $L^2(\Omega)^3$ to $\dive \bfu$, $p$ and $\curl  \bfu$.
Then, using \eqref{limd}--\eqref{limc}, we deduce from \eqref{lim} and \eqref{weaklimitfev}:
\begin{multline*}
\lim_{\nti} \int_\Omega \Big( (\lambda+2\mu)\dived  \bfu_n-p_n\Big)\,\vr_n \varphi \dx = \int_\Omega  \Big(p-(\lambda+2\mu)\dive  \bfu\Big)\, \bfv \cdot \gradi \varphi \dx \\  - \mu \int_\Omega  \curl  \bfu \cdot (L(\varphi) \bfv) \dx -\int_\Omega \vr( (\bfu \cdot \nabla) \bfu)\cdot \varphi \bfv \dx + \int_\Omega \bff\cdot \bfv \varphi  \dx.
\end{multline*}
Finally, since $p_n$ and $\bfu_n$ are solution of the discrete momentum balance equations, we already know, thanks to the estimates on $p_n$ and $\vr_n$, that the limits $p$ and $\bfu$ are solution of the momentum balance equation; hence, since $\bfv \in \xHone_{loc}(\Omega)^3$ and in accordance with the continuous case:
\[
\begin{array}{l}
\displaystyle \int_\Omega \Big((2\mu+\lambda)\dive \bfu-p\Big)\, (\dive  \bfv) \,  \varphi \dx - \int_\Omega \vr \bfu \otimes \bu : \nabla (\varphi \bfv) \dx =
\\ \hfill
\displaystyle \int_\Omega \Bigl( (p-(2\mu+\lambda)\dive  \bfu)\, \bfv \cdot \gradi \varphi 
- \mu \curl  \bfu \cdot (L(\varphi) \bfv)  - \mu \curl  \bfu \cdot \curl \bfv  \varphi + \bff \cdot \bfv \varphi  \Bigr) \dx.
\end{array}
\]
Moreover we know that $\dv(\vr \bfu) = 0 $ and $(\vr,\bfu) \in L^6(\Omega) \times H_0^1(\Omega)^3 $ and consequently $ \int_\Omega \vr \bfu \otimes \bfu : \nabla (\varphi \bfv) \dx = -\int_\Omega \vr( \bfu \cdot \nabla \bfu)\cdot \varphi \bfv \dx$.
Since  $\dive_{\mesh_n} \bfv_n$ and $\curl_{\mesh_n} \bfv_n$ converge weakly in $L^2_{loc}(\Omega)$ towards $\dive \bfv$ and $\curl\bfv$, one has $\dive \bfv=\vr$ and $\curl  \bfv=0$ and therefore:
\begin{multline*}
\int_\Omega \Big((2\mu+\lambda)\dive  \bfu-p\Big)\, \vr \varphi \dx =
\int_\Omega \Big( (p-(2\mu+\lambda)\dive  \bfu)\, \bfv \cdot \gradi \varphi 
- \mu(\curl   \bfu)\cdot L(\varphi) \bfv \Big) \dx. \\ 
-\int_\Omega \vr( \bfu \cdot \nabla \bfu)\cdot \varphi \bfv\dx +\int_\Omega \bff\cdot \bfv \varphi  \dx.
\end{multline*}
Then, we obtain the desired result, that is:
\begin{equation}
 \lim_{\nti} \int_\Omega ( p_n -(\lambda+2\mu) \dv_{\mesh_n}  \bfu_n)\, \vr_n \varphi \dx= \int_\Omega (p -(\lambda+2\mu) \dive \bfu)\, \vr \varphi \dx.
\label{stromboli}\end{equation}
\end{proof}

\subsubsection{A.e. and strong convergence of \texorpdfstring{$ \vr_n$ and $p_n$}{Lg}}
Let us now prove the a.e. convergence of $\vr_n$ and $ p_n$.
Using \cite[Lemma 2.1]{eymard2010convergent}, one can take $\varphi=1$ in ($\ref{fevd}$), wich gives:
\begin{equation*}
\lim_{\nti} \int_\Omega (p_n -(2\mu+\lambda)\dv_{\mesh_n} \bfu_n)\vr_n \dx = \int_\Omega (p-(2\mu+\lambda)\dv_\mesh \bfu)\vr \dx
\end{equation*}
Now using Lemma \ref{lmm:renorm} and ($\ref{cruun}$) we obtain the discrete version of ($\ref{palnlff}$) that is
\begin{equation}\label{convprho}
\limsup_{\nti} \int_\Omega p_n \vr_n \dx \le \int_\Omega p \vr \dx.
\end{equation}
Let $G_n= ( \vr_n^\gamma - \vr^\gamma)( \vr_n - \vr)$.
One has $G_n \in  L^1(\Omega)$ and $G_n \ge 0$ a.e. in $\Omega$.
Futhermore:
\[
\int_\Omega G_n \dx  = 
\int_\Omega  p_n  \vr_n \dx - \int_\Omega  p_n \vr \dx
- \int_\Omega  \vr^\gamma  \vr_n \dx  + \int_\Omega \vr^\gamma \vr \dx.
\]
Using the weak convergence in $L^2(\Omega)$ of $p_n$ and $ \vr_n$, and \eqref{convprho}, we obtain:
\[
\limsup_{\nti}  \int_\Omega G_n \dx  \le 0.
\]
Then (up to a subsequence), $G_n \tends 0$ a.e. and then $\vr_n \tends \vr$ a.e. (since $y \mapsto y^\gamma$ is an increasing function on $\xR_+$).
Finally, $\vr_n \tends \vr \textrm{ in } L^q(\Omega) \textrm{ for all } 1 \le q < 2\gamma$, $ p_n= \vr_n^{\gamma}  \tends \vr^\gamma \textrm{ in } L^q(\Omega) \textrm{ for all } 1 \le q < 2$, and $p=\vr^\gamma$.
We have thus proved the convergence of the approximate pressure and density, which, together with Proposition \ref{prop:conv1}, concludes the proof of Theorem \ref{theo:conv1}.

\section{Conclusion}
In this paper, we considered the MAC scheme for the stationary baro\-tro\-pic compressible Navier-Stokes equations and proved its convergence in the case $\gamma >3$. 
This latter restriction on $\gamma$ is used when writing the nonlinear convection term as in \eqref{conv-rewrite} in order to prove its convergence in the continuous case, in a manner that adapts to the discrete case, which is the case here with the convergence of $Q_n$ in \eqref{Qn}. 
So far, it is an open question to find a  technique of convergence of the nonlinear convection term that would adapt to the discrete case without requiring this condition.
%
\appendix

\section{Existence of a discrete solution}\label{existproof}

This section is devoted to the proof of Theorem \ref{thmexist}.
We now state the abstract theorem which will be used hereafter.

\begin{thm}\label{topologicaldegreethm}
Let $N$ and $M$ be two positive integers and $V$ be defined as follows:
\begin{equation*}
V = \{ (x,y) \in \R^N \times \R^M, \ y>0 \},
\end{equation*}
where, for any real number $c$, the notation $y > c$ is meant componentwise. Let  $ F $ be a continuous function from  $V \times [0,1]$ to $\R^N \times \R^M$ satisfying:
\begin{enumerate}
\item $\forall \zeta \in [0,1] $, if $ v \in V $ is such that $ F(v,\zeta)=0 $ then $ v \in W $ where $W  = \{ (x,y) \in \R^N \times \R^M, \ \| x\| < C_1, \ \text{and} \ \varepsilon < y < C_2 \}$,
with $C_1$ , $C_2,$ and $\varepsilon >0$  and $ \| \cdot \| $ a norm defined over $\R^N$ ;
\item the topological degree of $F (\cdot, 0)$ with respect to $0$ and $W$ is equal to $d_0 \ne 0$.
\end{enumerate}
Then the topological degree of $F (\cdot, 1)$ with respect to $0$ and $W$ is also equal to $d_0  \ne 0$; consequently, there exists
at least a solution $ v \in W$ such that $F(v,1) = 0$.
\end{thm}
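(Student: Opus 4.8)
The plan is to recognize that this statement is essentially a packaging of two standard properties of the Brouwer topological degree: its invariance under admissible homotopies, and the fact that a nonzero degree forces the existence of a zero. So rather than building anything from scratch, I would reduce the claim to a direct application of these two properties, the only real work being the verification of their hypotheses. First I would fix the setting: $W=\{(x,y)\in\R^N\times\R^M : \|x\|<C_1,\ \varepsilon<y<C_2\}$ is a bounded open subset of $\R^{N+M}$, and since on its closure $\overline{W}$ one has $\varepsilon\le y\le C_2$ componentwise with $\varepsilon>0$, it follows that $y>0$ on all of $\overline{W}$, hence $\overline{W}\subset V$. Consequently the map $(v,\zeta)\mapsto F(v,\zeta)$ is defined and continuous on the compact set $\overline{W}\times[0,1]$, so it is a legitimate homotopy on $\overline{W}$.

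The key step is to check that $0\notin F(\partial W,\zeta)$ for every $\zeta\in[0,1]$, which is exactly what makes the degree $\deg(F(\cdot,\zeta),W,0)$ well defined for each $\zeta$ and what licenses homotopy invariance. I would argue this by contradiction using hypothesis~(1): suppose $v\in\partial W$ satisfies $F(v,\zeta)=0$ for some $\zeta$. Since $\partial W\subset\overline{W}\subset V$, we have $v\in V$, so hypothesis~(1) gives $v\in W$. But $W$ is open, so $W\cap\partial W=\emptyset$, a contradiction. Hence $F(v,\zeta)\neq 0$ for all $v\in\partial W$ and all $\zeta\in[0,1]$, and in particular the degree is defined along the whole homotopy.

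With this boundary condition in hand, I would invoke the homotopy invariance of the Brouwer degree to conclude
\[
\deg\bigl(F(\cdot,0),W,0\bigr)=\deg\bigl(F(\cdot,1),W,0\bigr).
\]
Hypothesis~(2) identifies the left-hand side as $d_0\neq 0$, so $\deg(F(\cdot,1),W,0)=d_0\neq 0$ as well. Finally I would apply the existence property of the degree: if $\deg(F(\cdot,1),W,0)\neq 0$, then $F(\cdot,1)$ vanishes somewhere in $W$, i.e.\ there exists $v\in W$ with $F(v,1)=0$, which is the desired conclusion.

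The main obstacle here is not computational but conceptual: the entire argument hinges on guaranteeing that no zero of $F(\cdot,\zeta)$ migrates to the boundary $\partial W$ as $\zeta$ varies, since otherwise the degree would be undefined and homotopy invariance inapplicable. That hypothesis~(1) is stated for all $\zeta\in[0,1]$ simultaneously — rather than only at the two endpoints — is precisely what closes this gap, so the crux of the proof is the clean translation of~(1) into the boundary nonvanishing condition; everything else is a citation of standard degree theory.
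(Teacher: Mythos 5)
Your proposal is correct. Note that the paper itself gives no proof of this statement: it is presented as a known abstract result from degree theory (with a pointer to the literature, \cite{dei-85-non}), and is only \emph{used} in Appendix \ref{existproof} to deduce existence of a discrete solution. Your argument is precisely the standard one that such a citation stands for, and you execute it cleanly: the observation that $\varepsilon>0$ forces $\overline{W}\subset V$ so that $F$ restricts to a continuous homotopy on $\overline{W}\times[0,1]$, the translation of hypothesis~(1) into the boundary nonvanishing condition $0\notin F(\partial W,\zeta)$ for every $\zeta$ (using that $W$ is open so $W\cap\partial W=\emptyset$), homotopy invariance of the Brouwer degree, and finally the existence property of a nonzero degree. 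Nothing is missing.
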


Let us now prove the existence of a solution to \eqref{probdis}. 
Let us define 
\begin{equation*}
V =\{ (\bfu,\vr) \in \Hmeshzero \times L_\mesh, \ \vr_K > 0 \ \forall K \in \mesh \}.
\end{equation*}
and consider the continuous mapping 
  \begin{align*}
   F : &\ \Hmeshzero \times L_\mesh  \times [0,1]\longrightarrow \Hmeshzero \times L_\mesh \\
       &\ (\bfu, \vr,\zeta)\mapsto F(\bfu,\vr,\zeta) = (\hat{\bfu},\hat \vr)
  \end{align*}
  where $(\hat{\bfu},\hat{\vr})$ is the unique element of $\Hmeshzero\times L_{\mesh}$ such that 
    \begin{align}
&     \int_{\Omega}\hat{\bfu}\cdot\bfv \dx=\mu [\bfu,\bfv]_{1,\edges,0}+ (\mu+\lambda) \int_\Omega \dv_\mesh \bfu \dv_\mesh \bfv \dx
  \nonumber
    \\
&   +\zeta\int_\Omega \dv_{\widetilde\E} (\vr \bfu \otimes \bfu) \cdot \bfv \dx    
   -\zeta\int_{\Omega} \ \vr^\gamma \ \dive_{\mesh}  \bfv -\int_{\Omega}\P_{\edges} \bm{f} \cdot\bfv\dx, \ \forall\bfv\in\Hmeshzero, \label{F1} 
   \\
&    \int_{\Omega}\hat{\vr} \ q \dx= \zeta\int_{\Omega}  \dive_{\mesh}^{\upw} (\vr \bfu) \ q\dx+ \int_\Omega \cstab h_\mesh^\alpha(\vr-\vr^\star)q \dx, \ \forall q\in L_{\mesh}.
   \label{F2}
  \end{align}
Note that  the values of $\hat{u}_i$, $ i=1,\cdots,d$, and $\hat{\vr}$ are readily obtained by setting in this system $\vi=1_{D_{\edge}}$, $v_j=0, j\ne i$ in \eqref{F1} and $q=1_{K}$ in \eqref{F2}.

Any  solution of $F(\bfu,\vr,1)=0$ is a solution of Problem \ref{probdis} where $p = \vr^\gamma $.
   
The mapping $F$ is continuous. 

Let $(\bfu,\vr)\in \Hmeshzero\times L_{\mesh} $ and $\zeta \in [0,1]$  such that $F(\bfu,\vr,\zeta)=(0,0)$ (in particular $\vr >0$). Then for any $ (\bfv,q) \in \Hmeshzero\times L_\mesh$,
  \begin{subequations}\label{weak:ro}
  \begin{align} 
      &\zeta \int_\Omega \dv_{\widetilde\E}(\vr \bfu \otimes \bfu) \dx+  \mu [\bfu,\bfv]_{1,\edges,0} + (\mu+\lambda)\int_\Omega \dv_\mesh \bfu \dv_\mesh \bfv \dx \nonumber \\
&  \quad \quad \quad \quad \quad \quad \quad \quad \quad \quad -\zeta \int_\Omega \vr^\gamma\, \dv_\mesh \bv \dx = \int_\Omega \mathcal P_\edges \bff \cdot \bfv \dx, \label{dmomzeta}
      \\  
      & \zeta\int_\Omega \dive_\mesh^{\upw}  (\vr \bfu)\, q \dx +  \int_\Omega \cstab h_\mesh^\alpha(\vr-\vr^\star)q \dx =0. \label{dcontzeta}
    \end{align}
  \end{subequations}
Taking $q=1$ as a test function in ($\ref{dcontzeta}$), and using the conservativity of  the fluxes we obtain 
\begin{equation}\label{massconserv}
\int_\Omega \vr \dx= \| \vr \|_{L^1(\Omega)} = M >0.
\end{equation}
This relation provides a bound for $\vr$ in the $L^1$ norm, and therefore in all norms since the problem is of finite dimension. 
Taking $\bfu $ as a test function in ($\ref{dmomzeta}$) and following Step 1 of the proof of Proposition \ref{prop:estimates} gives
\begin{equation}\label{estvelocity}
 \| \bu \|_{1,\E,0} < C_1
\end{equation}
where the constant $C_1$ depends only on the data of the problem and not on $\zeta$. Now a straightforward computation gives
\begin{equation*}
\vr_K \ge \frac{ \cstab \min_{L \in \mesh} |L| h_\mesh^\alpha \vr^\star}{\cstab h_\mesh^\alpha |\Omega| + \stkl |\edge||u_{K,\edge} |  }
\end{equation*}
Consequently by virtue of  ($\ref{estvelocity}$) there exists $\varepsilon > 0 $ such that 
\begin{equation}\label{estunderrho}
\vr_K > \varepsilon, \ \forall K \in \mesh,
\end{equation}
where the constant $\varepsilon$ depends only on the data of the problem. Clearly, from ($\ref{massconserv}$), one has also
\begin{equation}\label{estrhoabove}
\vr_K \le \frac{M}{ \min_{K \in \mesh} |K|}=C_2-1, \ \forall K \in \mesh.
\end{equation}
Moreover the system $F(\bfu,\vr,0) = 0 $ reads:
  \begin{subequations}\label{weak:ro0}
  \begin{align} 
      &  \mu [\bfu,\bfv]_{1,\edges,0} + (\mu+\lambda)\int_\Omega \dv_\mesh \bfu \dv_\mesh \bfv \dx  = \int_\Omega \mathcal P_\edges \bff \cdot \bfv \dx, \ \forall \bv \in \Hmeshzero, \label{dmomzeta0}
      \\  
      &  \vr_K =\vr^\star, \ \forall K \in \mesh. \label{dcontzeta0}
    \end{align}
  \end{subequations}
which has clearly one and only one solution.
Let W defined by
\begin{equation*}
W = \{ (\bfu,\vr) \in \Hmeshzero \times L_\mesh \ \text{such that} \ \| \bfu \| < C_1, \ \varepsilon < \vr_K < C_2 \}
\end{equation*}
Since $F(\bfu,\vr,0) = 0 $ is a linear system which has one and only one  solution belonging to $W$, the topological degree $d_0$ of $F (\cdot, \cdot, 0)$ with respect to 0 and $W$ is not zero. Then, using the inequalities  ($\ref{estvelocity}$), ($\ref{estunderrho}$), ($\ref{estrhoabove}$), Theorem \ref{topologicaldegreethm} applies, which concludes the proof.

\bibliographystyle{abbrv}
\bibliography{nsc_cv_mac}

\end{document}